\theoremstyle{plain}
\newtheorem{thm}{Theorem}
\newtheorem{prop}[thm]{Proposition}
\newtheorem{lemma}[thm]{Lemma}
\newtheorem{cor}[thm]{Corollary}
\newtheorem*{thm*}{Theorem}
\theoremstyle{definition}
\newtheorem{defin}{Definition}
\theoremstyle{remark}
\newtheorem*{rem}{Remark}
\newcommand\blfootnote[1]{%
  \begingroup
  \renewcommand\thefootnote{}\footnote{#1}%
  \addtocounter{footnote}{-1}%
  \endgroup
}
\newcommand{\address}{{
\bigskip
\footnotesize
 \noindent \textsc{Faculty of Mathematics \\ University of Vienna \newline Oskar-Morgenstern-Platz 1, 1090 Vienna, Austria}\par\nopagebreak
  \noindent \textit{E-mail}: \texttt{christoph.harrach@univie.ac.at}
}}
\DeclareMathOperator{\SO}{SO}
\DeclareMathOperator{\id}{id}
\DeclareMathOperator{\GL}{GL}
\DeclareMathOperator{\im}{im}
\DeclareMathOperator{\vol}{vol}
\DeclareMathOperator{\End}{End}
\DeclareMathOperator{\gr}{gr}
\begin{document}
\title{Poisson transforms adapted to BGG-complexes}
\author{Christoph Harrach}

\newcommand{\mc}{\mathbb{C}}
\newcommand{\me}{\mathbb{E}}
\newcommand{\mh}{\mathbb{H}}
\newcommand{\ml}{\mathbb{L}}
\newcommand{\mn}{\mathbb{N}}
\newcommand{\mr}{\mathbb{R}}
\newcommand{\mv}{\mathbb{V}}
\newcommand{\mw}{\mathbb{W}}
\newcommand{\mz}{\mathbb{Z}}

\newcommand{\xa}{\mathfrak{a}}
\newcommand{\xg}{\mathfrak{g}}
\newcommand{\xh}{\mathfrak{h}}
\newcommand{\xk}{\mathfrak{k}}
\newcommand{\xm}{\mathfrak{m}}
\newcommand{\xn}{\mathfrak{n}}
\newcommand{\xp}{\mathfrak{p}}
\newcommand{\xq}{\mathfrak{q}}
\newcommand{\xu}{\mathfrak{u}}

\newcommand{\ca}{\mathcal{A}}
\newcommand{\cc}{\mathcal{C}}
\newcommand{\ce}{\mathcal{E}}
\newcommand{\ch}{\mathcal{H}}
\newcommand{\cl}{\mathcal{L}}
\newcommand{\cp}{\mathcal{P}}
\maketitle

\begin{abstract}
We present a new construction for Poisson transforms between vector bundle valued differential forms on homogeneous parabolic geometries and vector bundle valued differential forms on the corresponding Riemannian symmetric space, which can be described in terms of finite dimensional representations of reductive Lie groups. In particular, we use these operators to relate the BGG-sequences on the domain with twisted deRham sequences on the target space. Finally, we explicitly design a family of Poisson transforms between standard tractor valued differential forms for the real hyperbolic space and its boundary which are compatible with the BGG-complex.
\end{abstract}

\blfootnote{{\it 2010 Mathematics Subject Classification :} 53C65, 53A30, 43A85, 35J05}
\blfootnote{{\it Key words and phrases:} Poisson transforms, BGG-operators, conformal geometry, Laplacian on symmetric spaces}
\blfootnote{supported by project P27072-N25 of the Austrian Science Fund (FWF)}

\section{Introduction}
In the realms of harmonic analysis the Poisson transform was introduced to solve the problem of finding joint eigenfunctions for the algebra of invariant differential operators on a symmetric space \cite{helgason_GASS}. Explicitly, to any $X = G/K$ one can naturally assigned its F\"{u}rstenberg boundary $B$, which is topologically a sphere, and the natural $G$-action on $X$ extends smoothly to $B$, c.f. \cite[ch. 1.6]{borel}. Then the Poisson transforms map smooth functions on $B$ to smooth functions on $X$. Subsequently, these operators were generalized to map between sections of arbitrary homogeneous vector bundles in \cite{vanderven94} for the case of real rank $1$ and independently in \cite{olbrich95} and \cite{yang94} for arbitrary Riemannian symmetric spaces. Finally, in \cite{gaillard} P.-Y. Gaillard introduced an alternative construction of Poisson transforms for differential forms on hyperbolic spaces of even dimension which are phrased in terms of the exterior calculus. 

Although Poisson transforms are a fundamental tool in the spectral theory of Riemannian symmetric spaces, their interplay with the geometric structure of $B$ was mostly disregarded. Via the induced $G$-action the boundary $B$ can be viewed as a homogeneous space $G/P$, where $P$ is a minimal parabolic subgroup of $G$, and thus is naturally a (homogeneous) parabolic geometry. These geometric structures, which include conformal, CR and projective manifolds, have been an target of intensive study in recent years due to their r\^{o}le in the study of Poincar\'{e}-Einstein manifolds \cite{biquard_mazzeo}, AdS/CFT correspondence \cite{maldacena}, complex analysis \cite{fefferman} and representation theory \cite{kobayashi_orsted}. One of the major tools in the theory of parabolic geometries are the BGG-sequences \cite{cap_slovak_soucek, calderbank_diemer}, which in the case of homogeneous spaces are complexes. For a given tractor bundle $V$ these consist of a series of homology bundles $\ch_k = \ch_k(G/P,V)$ over $G/P$ as well as a series of differential operators $D_k \colon \Gamma(\ch_k) \to \Gamma(\ch_{k+1})$ whose cohomology computes the twisted deRham cohomology. These operators turn out to encode a great amount of information about the underlying parabolic geometry. 

The aim of this paper is to generalize Gaillard's approach as well as the scalar valued case in \cite{harrach16} to Poisson transforms between vector bundle valued differential forms on homogeneous parabolic geometries $G/P$ and their corresponding Riemannian symmetric spaces $G/K$. Explicitly, if $V \to G/P$ and $W \to G/K$ are homogeneous vector bundles, we define $G$-equivariant integral operators
\begin{align*}
 \Phi \colon \Omega^k(G/P,V) \to \Omega^{\ell}(G/K, W)
\end{align*}
whose kernels are induced by invariant elements in a finite dimensional representation of a reductive Lie group. Furthermore, since the construction of these operators is tailored to the exterior calculus on homogeneous spaces, their composition with several invariant differential operators can be expressed in terms of equivariant computations of the underlying kernels. This in turn enables us to give a simple condition on the kernel ensuring that the corresponding Poisson transform $\Phi$ naturally factors to a $G$-equivariant map
\begin{align*}
 \underline{\Phi} \colon \Gamma(\ch_k(G/P,V)) \to \Omega^{\ell}(G/K,W)
\end{align*}
which are compatible with the BGG-complex.

As an application, we consider the group $G = \SO(n+1,1)_0$, in which case $G/K$ is the real hyperbolic space of dimension $n+1$ with $G/P$ being its conformal boundary. Then for all $1 \le k \le n-1$ we construct a family of Poisson transforms $\Phi_k$ between tractor valued $k$-forms on $G/P$ and $G/K$ which naturally factor to the BGG-complex and have the following properties:
\begin{thm*}
 Let $G = \SO(n+1,1)_0$, $K$ its maximal compact subgroup, $P$ its minimal parabolic subgroup and $\mv$ the standard representation of $G$. Then for all $1 \le k \le n-1$ and any density bundle $\ce[\lambda] \to G/P$ with $\lambda \in \mr$ the $G$-equivariant operators
 \begin{align*}
  \underline{\Phi}_{k,\lambda} \colon \Gamma(\ch_k(G/P, G \times_P\mv) \otimes \ce[\lambda]) \to \Omega^k(G/K, G \times_K \mv)
 \end{align*}
induced by $\Phi_k$ satisfy the following properties:
\begin{enumerate}[(i)]
 \item The image of $\underline{\Phi}_{k,\lambda}$ is contained in the space of coclosed differential forms on $G/K$ which are eigenforms for a twisted Laplace operator with eigenvalue $-\lambda(n-2k+\lambda)$.
 \item For all $k = 1, \dotsc, n-2$ the BGG-operators $D_k$ and the covariant exterior derivative $d^{\nabla}$ induced by the tractor connection are related by
 \begin{align*}
  (k+2) d^{\nabla} \circ \underline{\Phi}_{k,0} = (-1)^{n-k}k(n-2k) \underline{\Phi}_{k+1,0} \circ D_{k}.
 \end{align*}
\end{enumerate}
\end{thm*}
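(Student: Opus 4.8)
The proof rests on the correspondence, established above, between Poisson transforms and invariant differential operators: composing a Poisson transform with $d^{\nabla}$, $\delta^{\nabla}$, a BGG-operator or a Laplacian corresponds to an explicit algebraic operation on its kernel. I would therefore begin by recording $\Phi_k$ through its kernel $\kappa_k$, an invariant element of a finite-dimensional representation of the appropriate subgroup assembled from $\End(\mv)$, exterior powers of $\xn^{*}$ and of $(\xg/\xk)^{*}$, together with the one-dimensional density contribution attached to $\ce[\lambda]$, and by checking that $\kappa_k$ meets the factorisation condition isolated earlier, so that $\underline{\Phi}_{k,\lambda}$ is genuinely defined on $\Gamma(\ch_k(G/P,G\times_P\mv)\otimes\ce[\lambda])$. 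Part~(i) then splits into two algebraic verifications. Coclosedness, $\delta^{\nabla}\circ\underline{\Phi}_{k,\lambda}=0$, is reduced by the composition formula for $\delta^{\nabla}$ to a purely algebraic condition on $\kappa_k$ (the Kostant codifferential $\partial^{*}$ in the $G/K$-slot, corrected by the $\xg$-action on $\mv$), which is read off the explicit kernel. For the eigenvalue statement one uses that a coclosed form lies in $\ker\delta^{\nabla}$, so that the twisted Laplacian reduces there to $\delta^{\nabla}d^{\nabla}$; combining the composition formulae for $d^{\nabla}$ and $\delta^{\nabla}$ expresses $\Delta\circ\underline{\Phi}_{k,\lambda}$ as $\underline{\Phi}_{k,\lambda}$ precomposed with a zeroth-order operator governed by the Casimir of $\xg$, and evaluating the scalar by which that Casimir acts on the $P$-subquotient of $\mv\otimes\ce[\lambda]$ carrying $\ch_k$ produces $-\lambda(n-2k+\lambda)$.

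For part~(ii) I would observe that both $d^{\nabla}\circ\underline{\Phi}_{k,0}$ and $\underline{\Phi}_{k+1,0}\circ D_k$ are $G$-equivariant operators $\Gamma(\ch_k(G/P,G\times_P\mv))\to\Omega^{k+1}(G/K,G\times_K\mv)$; being of Poisson type, each is determined by a kernel. The composition formula for $d^{\nabla}$ identifies the kernel of the left-hand side with an explicit algebraic modification of $\kappa_k$ — essentially the covariant exterior derivative of $\kappa_k$ in the $G/K$-variable, which under $G$-equivariance turns into the $\xg$-action on $\mv$ together with a Koszul-type term on $\Lambda^{\bullet}\xn^{*}$. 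On the other side, up to the BGG-splitting operator and $\partial^{*}$, $D_k$ is the twisted exterior derivative on $G/P$; running this through the composition formula for $d^{\nabla}$ on the source and inserting the explicit splitting operator for the standard tractor bundle over the conformal sphere $G/P$ yields the kernel of $\underline{\Phi}_{k+1,0}\circ D_k$ as an algebraic modification of $\kappa_{k+1}$. It then remains to check that these two invariant elements coincide after multiplication by the scalar $(-1)^{n-k}k(n-2k)/(k+2)$, a finite computation in the representation theory of $\SO(n+1,1)$ and of its Levi and maximal compact subgroups.

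The crux, I expect, is twofold. First, one needs a sufficiently explicit and correctly normalised description of the kernels $\kappa_k$: invariance and the factorisation condition pin them down, but transporting the normalisation conventions hidden in the construction of $\Phi_k$, in the BGG-splitting operators, and in the definitions of $d^{\nabla}$ and $\delta^{\nabla}$ on $G/K$ through the composition formulae is exactly what decides the constants $k+2$, $(-1)^{n-k}$ and $k(n-2k)$, and an error in any single normalisation corrupts them. Second, both the Casimir computation in~(i) and the kernel comparison in~(ii) require branching the relevant tensor products of $\mv$, $\mv^{*}$ and exterior powers of $\xn^{*}$ to the Levi (resp. compact) subgroup and determining precisely which components survive the algebraic operators involved; for the standard representation this is tractable, but it needs particular care at the extreme values $k=1$ and $k=n-1$, where the homology bundles $\ch_k$ degenerate.
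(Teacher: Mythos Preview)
Your overall plan---reduce everything to algebraic identities among the invariant kernels---is exactly the paper's strategy, but two of your proposed mechanisms diverge from what the paper actually does, and one of them would make your life needlessly hard.

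For part~(i) you invoke the Casimir: you want $\Delta^{V_K}\circ\underline{\Phi}_{k,\lambda}$ to be $\underline{\Phi}_{k,\lambda}$ precomposed with a scalar coming from the Casimir on the $P$-subquotient of $\mv\otimes\ce[\lambda]$. The paper does \emph{not} do this; it works directly. Since $V_P[\lambda]$ is only a $P$-bundle, not a tractor bundle, the clean identity $D_{\cc}=-\Delta^{V_K}+\tau_{\cc}$ does not apply on the nose, and the paper instead computes $d_K(\pi_P^*\sigma)=\lambda\,\pi_P^*\sigma\cdot E^*$ for a $\lambda$-density $\sigma$, feeds this into explicit formulae for $\delta_K(\phi_k^{\mv}\wedge\pi_P^*\alpha)$ and $\Delta_K(\phi_k^{\mv}\wedge\pi_P^*\alpha)$, and obtains the eigenvalue $-\lambda(n-2k+\lambda)$ plus error terms with values in $L(\mv_{-1},\mv)$. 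Those error terms are then shown to lie in $\im(\partial_P^*)$ and hence vanish after restriction to $\ker(\partial^*)$. Your Casimir route may be salvageable, but it is not the argument given. Also, your phrase ``the Kostant codifferential $\partial^*$ in the $G/K$-slot'' is a misnomer: the $G/K$-side codifferential $\delta_K$ is built from the Hodge star and the twisted tractor connection, not from any Lie-algebra homology differential.

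For part~(ii) the discrepancy is more substantive. You propose to ``insert the explicit splitting operator for the standard tractor bundle'' to compute $\underline{\Phi}_{k+1,0}\circ D_k$. The paper deliberately avoids this. Because the kernels $\phi_k^{\mv}$ were constructed to satisfy $\partial_P^*d_P\phi_k^{\mv}=0$, one has $\Phi_{k+1}\circ d^{V_P}\circ\partial^*=0$, and the paper observes (in the discussion preceding Theorem~\ref{thm_harmonic_bgg}) that this alone gives $\underline{\Phi}_{k+1}(D_k\sigma)=\Phi_{k+1}(d^{V_P}\alpha)$ for \emph{any} representative $\alpha\in\ker(\partial^*)$---no splitting operator needed. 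The identity in~(ii) then reduces to the kernel identity $(k+2)\,d_K\phi_k^{\mv}=(-1)^{k+1}k(n-2k)\,d_P\phi_{k+1}^{\mv}$, which the paper verifies by a direct (if lengthy) computation using the explicit form of $\phi_k^{\mv}$ and the bracket relations in $\mathfrak{so}(n+1,1)$. Your approach via the splitting operator would eventually reach the same place, but with considerably more bookkeeping.
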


This paper is organised as follows: In section \ref{sec_geometry_homogeneous} we briefly recall the definition of the BGG-complex on homogeneous parabolic geometries as well as introduce a calculus for tractor bundle valued differential forms on a Riemannian symmetric space. Afterwards, we will relate these two types of homogeneous spaces in section \ref{sec_poisson_transform} by presenting a new construction of Poisson transforms between vector valued differential forms on $G/P$ and $G/K$ and analyse their compositions with several invariant differential operators. As an application of this theory we turn in the last section to the construction of a family of Poisson transforms between tractor valued differential forms on the conformal sphere and the real hyperbolic space, which naturally descend to the BGG-complex, and study their behaviour after composition with several differential operators.

\section{Geometry of homogeneous spaces}\label{sec_geometry_homogeneous}
\subsection{Invariant connections, covariant exterior derivative}\label{sec_invariant_connections_cov_ext}
Let $G$ be a Lie group and $H$ a closed subgroup of $G$. The canonical projection $G \to G/H$ endows $G$ with the structure of a principal $H$-bundle over $G/H$. It is well known (\cite[Proposition 1.4.3]{cap_slovak}) that there is an equivalence of categories between homogeneous vector bundles $V$ over $G/H$ and finite dimensional $H$-representations $\mv$ given by the associate bundle $V = G \times_H \mv$. Furthermore, $G$-invariant sections of $V$ are in bijective correspondence with $H$-invariant elements of $\mv$, c.f. \cite[Theorem 1.4.4]{cap_slovak}.  

Recall that a $G$-invariant connection $\nabla$ on a homogeneous vector bundle $V$ gives rise to the \emph{covariant exterior derivative}, i.e. a family of $G$-invariant differential operators $d^{\nabla} \colon \Omega^k(G/H, V) \to \Omega^{k+1}(G/H, V)$ for all $0 \le k \le \dim(G/H)-1$. These operators satisfy $d^{\nabla}d^{\nabla}\omega = R^{\nabla} \wedge \omega$ for all $\omega \in \Omega^k(G/H, V)$, where $R^{\nabla}$ is the curvature of $\nabla$. In particular, the twisted deRham sequence
\[
 \begin{tikzcd}
 0 \ar{r}{} & \Gamma(V) \ar{r}{\nabla} & \Omega^1(G/H, V) \ar{r}{d^{\nabla}} & \dotso \ar{r}{d^{\nabla}} & \Omega^{\dim(G/H)}(G/H,V)
 \end{tikzcd}
\]
is a complex if and only if the connection $\nabla$ is flat. If $V$ is a homogeneous vector bundle associated to a representation of the full group $G$, then it always carries a flat $G$-invariant connection $\nabla^V$. In this case, we call $V$ a \emph{tractor bundle} and $\nabla^V$ the \emph{tractor connection}.

Let $s \in \Gamma(V)$ be a section of a homogeneous vector bundle $V = G \times_H \mv$ and $f \colon G \to \mv$ the corresponding smooth, $H$-equivariant map. The $G$-action on $f$ gives rise to an action of the universal enveloping algebra $\mathfrak{u}(\xg)$ of the Lie algebra $\xg$ of $G$ by right-invariant vector fields. In particular, the Casimir operator $\cc$ of $\xg$ induces a $G$-equivariant differential operator $D_{\cc} \colon \Gamma(V) \to \Gamma(V)$ for all homogeneous vector bundles over $G/H$. 

\begin{rem} For a real semisimple Lie group $G$ with finite centre we will consider in section \ref{sec_poisson_transform} its corresponding symmetric space $G/K$ and a parabolic geometry $G/P$ and relate geometric properties via $G$-equivariant linear operators $\Phi$ mapping between vector valued differential forms. In particular, since the operator $D_{\cc}$ is induced by the $\xg$-action on the underlying representation it will also commute with $\Phi$. Our aim in the following two sections is to express the operator $D_{\cc}$ in terms of well known differential operators on $G/K$ and $G/P$, respectively, c.f. Theorems \ref{thm_casimir_box} and \ref{thm_casimir_laplace}.
\end{rem}

\subsection{Homogeneous parabolic geometries}\label{sec_parabolic}
Let $G$ be a semisimple Lie group with Lie algebra $\xg$ and let $P \subset G$ be a parabolic subgroup with Lie algebra $\xp$. Recall that there is a corresponding $|k|$-grading on $\xg$, i.e. a vector space decomposition $\xg = \xg_{-k} \oplus \dotso \oplus \xg_k$ so that:
\begin{enumerate}[(a)]
 \item $[\xg_i, \xg_j] \subset \xg_{i+j}$, where we agree that $\xg_i = \lbrace 0 \rbrace$ for $i > |k|$,
 \item $\xg_- := \bigoplus_{i<0} \xg_i$ is generated by $\xg_{-1}$, 
 \item $\xp$ coincides with $\bigoplus_{i\ge 0} \xg_i$.
\end{enumerate}
 There is a unique element $E \in \xg_0$, called the grading element, which is characterized by $[E, X] = jX$ for all $X \in \xg_j$. The Killing form on $\xg$ induces nongenerate pairings between $\xg_i$ and $\xg_{-i}$ for all $i \ge 0$ and also a $P$-equivariant isomorphism between $(\xg/\xp)^*$ and the nilradical $\xp_+ := \bigoplus_{i > 0} \xg_i$ of $\xp$.


Exploiting the Lie algebra structure on the fibres of the cotangent bundle $T^*(G/P) \cong G \times_P \xp_+$ on $G/P$, we can use the Lie algebra homology differential to introduce a series of $G$-invariant differential operators defined on the corresponding homology bundles. These sequence of operators form a complex, called the \emph{BGG-complex}, which turns out to be a resolution of the deRham complex. Since we will rely on its construction in the sequel, we will recall it explicitly.

Consider the standard complex for Lie algebra homology of $\xp_+$ with values in a $P$-representation $\mv$. The $k$-chain space in this complex is given by the $P$-module $C_k(\xp_+, \mv) := \Lambda^k \xp_+ \otimes \mv$, and the differential is the map $\partial^* \colon C_k(\xp_+, \mv) \to C_{k-1}(\xp_+, \mv)$ defined by
\begin{align*}
 \partial^*(Z_1 \wedge \dotso \wedge Z_k \otimes v) :=\ &\sum_{i=1}^k (-1)^{i+1} Z_1 \wedge \dotso \wedge \widehat{Z_i} \wedge \dotso \wedge Z_k \otimes Z_i \cdot v \\
 &+ \sum_{i<j} (-1)^{i+j} [Z_i, Z_j] \wedge Z_1 \wedge \dotso \widehat{Z_i} \dotso \widehat{Z_j} \dotso \wedge Z_k \otimes v
\end{align*}
for all $Z_1, \dotsc, Z_k \in \xp_+$ and $v \in \mv$, where the hat denotes omission. For historical reasons this map is called the \emph{Kostant codifferential}. A direct computation shows that $\partial^*$ is $P$-equivariant and satisfies $\partial^* \circ \partial^* = 0$. In particular, its kernel and image are $P$-invariant subspaces in $C_k(\xp_+, \mv)$ and thus the $k$-th homology space $H_k(\xp_+, \mv) := \ker(\partial^*)/\im(\partial^*)$ is naturally a $P$-module, which is completely reducible.

Another property, which will be important in section \ref{sec_poisson_differential_operators}, is that the Kostant codifferential is self adjoint with respect to the wedge product. 


\begin{prop}\label{prop_Kostant_codifferential_self_adjoint}
 Let $G$ be a semisimple Lie group with Lie algebra $\xg$, $P \subset G$ a parabolic subgroup with Lie algebra $\xp$ and let $n = \dim(G/P)$. For a $P$-representation $\mv$ let $\alpha \in C_k(\xp_+, \mv)$ and $\beta \in C_{n+1-k}(\xp_+, \mv^*)$ be two chains. Then the Kostant codifferential satisfies
 \begin{align*}
  (\partial^*\alpha) \wedge \beta = (-1)^k \alpha \wedge (\partial^*\beta),
 \end{align*}
where the wedge product is formed by applying the contraction $\mv \otimes \mv^* \to \mr$.
\end{prop}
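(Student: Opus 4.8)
The two sides of the claimed identity both land in $\Lambda^n\xp_+$, which is one-dimensional because $\dim\xp_+ = \dim(\xg/\xp)^* = n$; thus the statement is an equality of top-degree multivectors twisted by the contraction $\mv\otimes\mv^*\to\mr$. The plan is to split the Kostant codifferential into its two natural constituents and verify the self-adjointness relation for each separately. Fix a basis $\{X_a\}$ of $\xp_+$ with dual basis $\{X^a\}$ of $\xp_+^*$, and write $\partial^* = \partial^*_0 + \partial^*_1$, where $\partial^*_0(\xi\otimes v) := \sum_a(\iota_{X^a}\xi)\otimes(X_a\cdot v)$ gathers the terms carrying the $\xp_+$-action on the coefficients and $\partial^*_1(\xi\otimes v) := (\partial\xi)\otimes v$, with $\partial\colon\Lambda^{\bullet}\xp_+\to\Lambda^{\bullet-1}\xp_+$ the Chevalley--Eilenberg homology differential of the Lie algebra $\xp_+$ with trivial coefficients; a short computation shows that $\partial^*_0 + \partial^*_1$ reproduces the defining formula. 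By bilinearity it suffices to treat $\alpha = \xi\otimes v$ and $\beta = \eta\otimes\phi$ with $\xi\in\Lambda^k\xp_+$, $\eta\in\Lambda^{n+1-k}\xp_+$, $v\in\mv$ and $\phi\in\mv^*$.

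For the coefficient piece, using that each contraction $\iota_{X^a}$ is an antiderivation of degree $-1$ on $\Lambda^{\bullet}\xp_+$ and that the dual action satisfies $(X_a\cdot\phi)(v) = -\phi(X_a\cdot v)$, one obtains
\[
 (\partial^*_0\alpha)\wedge\beta - (-1)^k\,\alpha\wedge(\partial^*_0\beta) = \sum_a\phi(X_a\cdot v)\bigl((\iota_{X^a}\xi)\wedge\eta + (-1)^k\xi\wedge(\iota_{X^a}\eta)\bigr) = \sum_a\phi(X_a\cdot v)\,\iota_{X^a}(\xi\wedge\eta).
\]
Since $\xi\wedge\eta\in\Lambda^{n+1}\xp_+ = \{0\}$, every summand on the right vanishes, so $\partial^*_0$ satisfies the required relation with the sign $(-1)^k$.

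For the bracket piece, since $\partial^*_1$ acts only on the $\Lambda^{\bullet}\xp_+$-factor we have $(\partial^*_1\alpha)\wedge\beta = \phi(v)\,(\partial\xi)\wedge\eta$ and $\alpha\wedge(\partial^*_1\beta) = \phi(v)\,\xi\wedge(\partial\eta)$, so the claim reduces to the purely Lie-algebraic identity $(\partial\xi)\wedge\eta = (-1)^k\,\xi\wedge(\partial\eta)$ in $\Lambda^n\xp_+$. This is Poincaré duality for the Lie algebra $\xp_+$: as the nilradical of a parabolic, $\xp_+$ is nilpotent, hence unimodular ($\tr(\mathrm{ad}_X) = 0$ for all $X\in\xp_+$), so fixing a generator $\omega$ of $\Lambda^n\xp_+^*$ the iterated contractions $\Lambda^j\xp_+\to\Lambda^{n-j}\xp_+^*$, $\zeta\mapsto\iota_\zeta\omega$, are isomorphisms intertwining $\partial$ with (a sign times) the Chevalley--Eilenberg cohomology differential $d$ on $\Lambda^{\bullet}\xp_+^*$. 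Since $d$ is an antiderivation of the wedge product and these isomorphisms transform the wedge pairing $\Lambda^j\xp_+\times\Lambda^{n-j}\xp_+\to\Lambda^n\xp_+$ into the evaluation pairing on $\Lambda^{\bullet}\xp_+^*$, the antiderivation identity for $d$ transports to the asserted identity for $\partial$; the only bookkeeping is to confirm that the sign produced this way matches the $(-1)^k$ found for $\partial^*_0$. Combining the two contributions then yields the proposition.

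The step I expect to be the main obstacle is this Poincaré-duality identity. Unlike the contractions $\iota_{X^a}$, the homology differential $\partial$ is not a derivation of the wedge product, so merely expanding $\partial(\xi\wedge\eta) = 0$ (which holds since $\xi\wedge\eta$ has degree $n+1$) leaves, beyond the two desired terms, uncancelled cross contributions of the form $\sum_{i,p}\pm[Z_i,W_p]\wedge(\cdots)$; isolating $(\partial\xi)\wedge\eta = (-1)^k\,\xi\wedge(\partial\eta)$ really uses the unimodularity of $\xp_+$ together with a careful sign count.
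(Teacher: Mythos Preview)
Your approach is correct and genuinely different from the paper's. You split $\partial^* = \partial^*_0 + \partial^*_1$ and handle the coefficient-action piece via the trivial observation $\Lambda^{n+1}\xp_+ = 0$, then reduce the bracket piece to Poincar\'e duality for the nilpotent (hence unimodular) Lie algebra $\xp_+$. The paper instead works inside the ambient algebra $\xg$: using the Killing form it identifies $\xp_+\cong(\xg/\xp)^*$, views chains as linear maps $\Lambda^k\xg^*\to\mv$ vanishing upon insertion of elements of $\xp$, and derives a single formula
\[
\partial^*\phi = \sum_s\Bigl(\tfrac12\sum_t\epsilon_{\xi_t}\iota_{[\xi_s,\eta_t]} - \rho^{\mv}_{\xi_s}\Bigr)\iota_{\eta_s}\phi
\]
in terms of dual bases $\{\xi_s\}\subset\xp_+$ and $\{\eta_s\}\subset\xg_-$. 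The proof then consists of pushing the operators $\epsilon_{\xi_t}$, $\iota_{[\xi_s,\eta_t]}$, $\iota_{\eta_s}$, $\rho_{\xi_s}$ across the wedge, choosing a basis adapted to the grading so that $\epsilon_{\xi_t}$ and $\iota_{\eta_s}$ anticommute for $s\neq t$ while the diagonal brackets $[\xi_s,\eta_s]\in\xg_0\subset\xp$ insert trivially. This is entirely elementary and self-contained, at the price of a somewhat opaque auxiliary formula; your argument is conceptually cleaner but imports the Poincar\'e duality statement as a black box.

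One small correction to your sketch of the bracket piece: you invoke the antiderivation property of the Chevalley--Eilenberg differential $d$, but that is not what is being used. What you actually need is (a) the intertwining $D_{j-1}\circ\partial = \pm\, d\circ D_j$ for the contraction isomorphisms $D_j\colon\Lambda^j\xp_+\to\Lambda^{n-j}\xp_+^*$, $\zeta\mapsto\iota_\zeta\omega$, which is precisely where unimodularity of $\xp_+$ enters, together with (b) the tautological adjointness $(d\phi)(\zeta) = \phi(\partial\zeta)$ between $d$ and $\partial$ under the evaluation pairing $\Lambda^{p}\xp_+^*\times\Lambda^{p}\xp_+\to\mr$. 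Chaining these gives $(\partial\xi)\wedge\eta = \pm\,\xi\wedge(\partial\eta)$ directly; the wedge-antiderivation identity for $d$ does not enter. As you correctly flag, the remaining content is then the sign bookkeeping, which the paper's route sidesteps by never separating the two constituents.
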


\begin{proof} For $X \in \xp_+$ denote by $\rho_X^{\mv}$ the $P$-equivariant map on $C_k(\xp_+, \mv)$ given by the tensor product of the identity on $\Lambda^k \xp_+$ and the action of $X$ on $\mv$ and let $\epsilon_X$ be the map $\phi \mapsto X \wedge \phi$. Let $\{\xi_1, \dotsc, \xi_n\}$ be a basis of $\xp_+$ and $\{\eta_1, \dotsc, \eta_n\}$ a basis of $\xg_-$ which are dual with respect to the Killing form. We interpret a $k$-chain $\phi \in C_k(\xp_+, \mv)$ as a linear map map $\Lambda^k \xg^* \to \mv$ which is trivial upon insertion of elements in $\xp$. Then a direct computation shows that $\partial^*$ can be expressed on decomposable elements as
\begin{align*}
 \partial^*\phi = \sum_{s =1}^n\left(\frac{1}{2} \sum_{t = 1}^n \epsilon_{\xi_t}\iota_{[\xi_s, \eta_t]} - \rho_{\xi_{s}}^{\mv}\right) \iota_{\eta_s}\phi
\end{align*}
and by linearity this formula holds for all $k$-chains. Subsequently, we have
\begin{align*}
(\partial^*\alpha) \wedge \beta &= \frac{1}{2} \sum_{s =1}^n\sum_{t = 1}^n \left(\epsilon_{\xi_t}\iota_{[\xi_s, \eta_t]}\iota_{\eta_s}\alpha\right) \wedge \beta - \sum_{s=1}^n \left(\rho_{\xi_{s}}^{\mv} \iota_{\eta_s}\alpha\right) \wedge \beta = (*).
\end{align*}
In order to simplify the above expression we choose the basis of $\xp_+$ to be the union of bases of the positive grading components of $\xg$. Then by duality we see that $B(\xi_s, \eta_t) \neq 0$ if and only if $s = t$, implying that we can anticommute $\epsilon_{\xi_t}$ and $\iota_{\eta_s}$. Furthermore, if the indices are the same, the bracket $[\xi_s, \eta_s]$ is contained in $\xg_0 \subset \xp$ and thus inserts trivially into $\alpha$.

Therefore, in the first sum we can move $\epsilon_{\xi_t}$ right next to $\alpha$. In the next step, we move in both sums all the interior products as well as the operator $\epsilon_{\xi_t}$ to the right hand side of the wedge product, which adds the sign $(-1)^k$ in the first sum and $(-1)^{k+1}$ in the second sum. By definition of the dual action we therefore obtain that
\begin{align*}
 (*) &= (-1)^k \alpha \wedge \left( \frac{1}{2} \sum_{s =1}^n\sum_{t = 1}^n \left(\epsilon_{\xi_t}\iota_{[\xi_s, \eta_t]}\iota_{\eta_s} \beta \right) - \sum_{s=1}^n \left( \rho_{\xi_{s}}^{\mv^*} \iota_{\eta_s}\beta\right)\right) = (-1)^k \alpha \wedge (\partial^* \beta)
\end{align*}
as claimed.
\end{proof}

Let $V \to G/P$ be the vector bundle associated to the $P$-representation $\mv$. Since the subalgebra $\xp_+$ is isomorphic to $(\xg/\xp)^*$ as $P$-modules, the Kostant codifferential induces a $G$-invariant bundle map $\Lambda^k T^*(G/P) \otimes V \to \Lambda^{k-1} T^*(G/P) \otimes V$ for all $1 \le k \le \dim(G/P)$ and also a tensorial map on the level of sections, which we both denote by the same symbol. Furthermore, the kernel and image of this bundle map are $G$-invariant subbundles in $\Lambda^k T^*(G/P) \otimes V$ with underlying representations $\ker(\partial^*)$ and $\im(\partial^*)$, and we denote these subbundles by the same symbols. By naturality, the corresponding homology bundle $\ch_k(G/P, V)$ is associated to the $P$-module $H_k(\xp_+, \mv)$ and we denote by $\pi$ the canonical projection $\ker(\partial^*) \to \ch_k(G/P,V)$ as well as the induced map on the level of sections.

In the case of a tractor bundle $V$ we can use this setting to define a series of natural differential operators mapping between sections of homology bundles. In order to do so, let $d^V$ be the covariant exterior derivative induced by the tractor connection. Then it was shown in \cite{cap_slovak_soucek} (and simplified in \cite{calderbank_diemer}) that there exists a natural differential operator $L \colon \Gamma(\ch_k(G/P,V)) \to \Gamma(\ker(\partial^*))$, called the \emph{splitting operator}, which is determined by $\pi \circ L = \id$ and $\partial^* \circ d^V \circ L = 0$. Subsequently, we can define for all $0 \le k \le \dim(G/P)-1$ the \emph{($k$-th) BGG-operator} $D^V_k \colon \ch_k(G/P,V) \to \ch_{k+1}(G/P,V)$ via $D^V_k := \pi \circ d^V \circ L$. Since the tractor connection is flat, these operators satisfy $D_{k+1}^V \circ D_k^V = 0$, and the cohomology of the induced \emph{BGG-complex} 
\[ 
 \begin{tikzcd}
  0 \ar{r} &  \Gamma(\ch_0(G/P, V)) \ar{r}{D^{V}} &   \dotsb \ar{r}{D^{V}} & \Gamma(\ch_{\dim(G/P)}(G/P, V)) \ar{r} & 0
\end{tikzcd}
\]
computes the twisted deRham cohomology, c.f. \cite[Corollary 4.15]{cap_slovak_soucek}.

In order to relate the BGG-complex with Poisson transforms in the sequel, we define the \emph{(curved) box operator} $\Box^R := d^V\partial^* + \partial^*d^V$. Then by \cite[Proposition 5.5]{calderbank_diemer} each homology class in $\Gamma(\ch_k(G/P,V))$ has a unique representative in $\Gamma(\ker(\Box^R))$ which coincides with the image of $L$. 

\begin{thm}\label{thm_casimir_box}
 Let $G$ be a semisimple Lie group with Lie algebra $\xg$, $P$ a parabolic subgroup of $G$ and $V$ a tractor bundle over $G/P$ with underlying representation $\mv$. Let $D_{\cc} \colon \Omega^*(G/P,V) \to \Omega^*(G/P,V)$ be the differential operator induced by the Casimir element $\cc$ of $\xg$ and denote by $\tau_{\cc}^V$ the tensorial operator on $\Omega^*(G/P,V)$ induced by the tensor product of the identity on $\Lambda^k \xp_+$ and the action of $\cc$ on $\mv$. Then the curved box operator $\Box^R$ satisfies
 \begin{align*}
  D_{\cc}(\alpha) = 2 \Box^R(\alpha) + \tau^V_{\cc}(\alpha).
 \end{align*}
 for all $\alpha \in \Omega^k(G/P,V)$.
\end{thm}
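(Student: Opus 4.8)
The plan is to compute the action of the Casimir operator on vector-bundle-valued differential forms by realizing both sides as operators built from the $\xg$-action on $\Lambda^*\xg^*\otimes\mv$, and to match terms. First I would fix a basis $\{\xi_s\}$ of $\xp_+$ and a Killing-dual basis $\{\eta_s\}$ of $\xg_-$, together with a basis $\{\zeta_a\}$ of $\xg_0$ that is self-dual (or paired among itself) under the Killing form, so that the Casimir element decomposes as $\cc=\sum_s(\xi_s\eta_s+\eta_s\xi_s)+\sum_{a,b}B^{ab}\zeta_a\zeta_b$ up to the usual normalization. The key computational input is the formula from the proof of Proposition \ref{prop_Kostant_codifferential_self_adjoint}, namely
\[
 \partial^*\phi = \sum_{s}\Bigl(\tfrac12\sum_{t}\epsilon_{\xi_t}\iota_{[\xi_s,\eta_t]}-\rho^{\mv}_{\xi_s}\Bigr)\iota_{\eta_s}\phi,
\]
and a parallel formula for $d^V$ on a homogeneous space: writing the covariant exterior derivative in terms of right-invariant vector fields, $d^V$ should be expressible as $\sum_s\epsilon_{\eta^*_s}(\mathcal L_{\eta_s}+\text{connection terms})$, where the connection terms for the tractor connection are exactly the algebraic action of $\xg_-$ on $\mv$. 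Concretely I expect $d^V=\sum_s\epsilon_{?}(\zeta_s^{R}+\rho^{\mv}_{?})+\text{(curvature/torsion corrections)}$ where $\zeta^R$ denotes the right-invariant vector field; on a homogeneous parabolic geometry $G/P$ the Maurer–Cartan equation makes these corrections explicit and manageable.

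Next I would expand $\Box^R=d^V\partial^*+\partial^*d^V$ by substituting these two formulas and carefully collecting the resulting terms, sorting them by how many derivative operators $\mathcal L_{\eta_s}$ (equivalently, right-invariant vector fields) they contain. The second-order terms should reassemble into precisely the $\xg_-\cdot\xg_+$ and $\xg_+\cdot\xg_-$ part of the Casimir, producing $\tfrac12 D_{\cc}$ modulo algebraic terms; this is where the factor $2$ in the statement $D_{\cc}=2\Box^R+\tau^V_{\cc}$ originates. The zeroth-order (purely algebraic, tensorial) terms must then be identified: the $\epsilon\iota$ cross-terms coming from the brackets $[\xi_s,\eta_t]$, the $\rho^{\mv}_{\xi_s}\rho^{\mv}_{\eta_t}$-type terms, and the curvature corrections in $d^V$ should all combine to give exactly $\tau^V_{\cc}-(\text{the }\xg_0\text{-Casimir piece already absorbed})$, or more precisely should reorganize so that $2\Box^R$ plus the tensorial operator $\tau^V_{\cc}$ equals $D_{\cc}$. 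A cleaner route, which I would try first, is to use the abstract fact that $D_{\cc}$ is a second-order $G$-equivariant operator on $\Omega^k(G/P,V)$ and that $\Box^R$ differs from $D_{\cc}$ by at most a tensorial ($G$-invariant bundle) map; then one only needs to pin down the tensorial map, which can be done by evaluating at a single point and using the completely reducible $\xp$-module structure, reducing the whole computation to a finite-dimensional representation-theoretic identity on $\Lambda^k\xp_+\otimes\mv$.

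The main obstacle I anticipate is bookkeeping the curvature and torsion corrections in the formula for $d^V$: on $G/P$ the tractor connection is flat, so $d^Vd^V=0$, but $d^V$ expressed via right-invariant vector fields still picks up terms from the bracket $[\eta_s,\eta_t]\in\xg_-\oplus\xg_0$ (the negative part is "torsion-like" and the $\xg_0$-part feeds into the algebraic action), and these must be handled consistently with the conventions used for $\partial^*$. A secondary subtlety is the normalization of the Casimir and the precise matching of the $\xg_0$-component: one must check that the $\xg_0$-part of $\cc$ acting on $\mv$, together with whatever $\xg_0$-terms survive from $2\Box^R$, reconstitutes exactly $\tau^V_{\cc}$ and not $\tau^V_{\cc}$ plus a stray scalar. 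I would resolve both by working in a grading-adapted basis, using $[E,X]=jX$ to track grading weights, and verifying the scalar terms on the lowest-weight vector of $\mv$ as a sanity check. Once the algebraic identity on $\Lambda^k\xp_+\otimes\mv$ is established, $G$-equivariance propagates it to all of $\Omega^k(G/P,V)$, completing the proof.
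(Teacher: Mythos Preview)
The paper does not actually prove this statement: its entire proof is the citation ``\cite[Corollary 1]{cap_soucek_casimir}''. So there is no argument in the paper to compare against beyond the reference to \v{C}ap--Sou\v{c}ek.

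Your proposal is, in broad outline, a reconstruction of what that reference does. The key identity underlying the result is purely algebraic: on the $P$-module $\Lambda^k\xp_+\otimes\mv$ the Casimir element acts as $2\Box + c_{\mv}\cdot\id$, where $\Box = \partial\partial^* + \partial^*\partial$ is the Kostant Laplacian built from the Lie algebra cohomology differential $\partial$ and the homology differential $\partial^*$, and $c_{\mv}$ is the Casimir eigenvalue on $\mv$. On the homogeneous model $G/P$ the covariant exterior derivative $d^V$ corresponds exactly to the algebraic $\partial$ under the identification of $\Omega^k(G/P,V)$ with equivariant functions $G\to\Lambda^k\xp_+\otimes\mv$, so $\Box^R$ matches $\Box$ and the identity globalizes by $G$-equivariance. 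Your ``cleaner route''---observe that $D_{\cc}-2\Box^R$ is $G$-invariant and tensorial, then identify it at a point as a $P$-module map---is precisely the efficient version of this argument and is the one I would recommend you pursue.

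The more explicit basis computation you sketch first is workable but, as you anticipate, the bookkeeping of the $\xg_0$-part is where things can go wrong: you need Kostant's original identity $\cc = 2\Box + \cc_{\xg_0}$ on $\Lambda^*\xp_+\otimes\mv$ (where $\cc_{\xg_0}$ is the $\xg_0$-Casimir acting on the tensor product), together with the observation that for a $\xg$-representation $\mv$ the $\xg_0$-Casimir on $\Lambda^k\xp_+\otimes\mv$ differs from $\id\otimes\cc_{\xg}|_{\mv}$ by terms absorbed into $\Box$. Your worry about ``stray scalars'' is well-founded; the cleanest way to avoid it is not to expand $d^V$ directly via right-invariant vector fields but to use that on $G/P$ the tractor exterior derivative is induced by the $\xg$-cochain differential, which makes the comparison with $\partial^*$ transparent.
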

\begin{proof}
 \cite[Corollary 1]{cap_soucek_casimir}.
\end{proof}

\subsection{The Casimir operator on symmetric spaces}\label{sec_symmetric_space}
Let $G$ be a semisimple Lie group with finite centre and Lie algebra $\xg$, let $\theta \colon \xg \to \xg$ be a Cartan involution with corresponding Cartan decomposition $\xg = \xk \oplus \xq$ and $K \subset G$ be the maximal compact subgroup with Lie algebra $\xk$. The Lie bracket on $\xg$ satisfies $[\xk, \xk] \subset \xk$, $[\xk, \xq] \subset \xq$ and $[\xq,\xq] \subset \xk$, showing that $\xq$ is naturally a $\xk$-module as well as a $K$-module. Therefore, we obtain an isomorphism between $\xg/\xk$ and $\xq$ as $K$-modules and thus an identification of the tangent bundle $T(G/K)$ with the associated bundle $G \times_K \xq$. In particular, the restriction of the Killing form to $\xq$ is positive definite and $K$-invariant and hence induces a $G$-invariant Riemannian metric on $G/K$. Furthermore, the Cartan decomposition also induces a distribution $Q$ of $TG$ which is horizontal for the projection $G \to G/K$. This in turn induces a $G$-invariant connection on each homogeneous vector bundles over $G/K$, called the \emph{canonical connection}.

If $V$ is a tractor bundle with underlying representation $\rho \colon G \to \GL(\mv)$, then the tractor connection $\nabla^V$ is also a $G$-invariant connection on $V$. Furthermore, by \cite[Proposition 3.3.1]{cap_slovak} there is a $K$-invariant inner product $\langle \ , \ \rangle_{\mv}$ on $\mv$ characterized by $\langle X \cdot v, w \rangle_{\mv} = -\langle v, \theta(X)\cdot w \rangle_{\mv}$ for all $v$, $w \in \mv$ and $X \in \xg$, and we denote by $h$ the induced $G$-invariant bundle metric on $V$. Then we can define an additional $G$-invariant connection $\nabla^{\theta}$ on $V$, called the \emph{twisted tractor connection}, as the adjoint connection to $\nabla^V$ with respect to $h$, i.e. determined by the relation
\begin{align*}
 h(s, \nabla^{\theta}_{\xi} t) := \xi \cdot h(s, t) - h(\nabla^V_\xi s, t)
\end{align*}
for all $s$, $t \in \Gamma(V)$ and $\xi \in \mathfrak{X}(G/K)$. By adjointness it immediately follows that the connection $\nabla^{\theta}$ is also flat.

For the rest of this section we construct a calculus on tractor bundle valued differential forms on $G/K$. First, using the Euclidean bundle metric on $V$ we define for all $V$-valued $k$-forms $\alpha$ and $V$-valued $\ell$-forms $\beta$ a differential form $\alpha \wedge_h \beta \in \Omega^{k+\ell}(G/K)$ by the usual wedge product on differential forms composed with the contraction $h \colon \Gamma(V) \times \Gamma(V) \to C^{\infty}(G/K)$. Second, the bundle metric $h$ on $V$ together with the Riemannian metric on $G/K$ induce a pointwise inner product $\langle \ , \ \rangle$ on $\Omega^k(G/K, V)$ for all $k = 0, \dotsc, \dim(G/K)$ as well as an $L^2$-inner product. The former can be used to define a Hodge star operator 
\begin{align*}
 \ast &\colon \Omega^k(G/K,V) \to \Omega^{\dim(G/K)-k}(G/K, V), & \alpha \wedge_h \ast \beta &:= \langle \alpha, \beta \rangle \vol
\end{align*}
for all $\alpha$, $\beta \in \Omega^k(G/K, V)$, where $\vol$ is the volume form on $G/K$. Next, the tractor connection $\nabla^V$ and the twisted tractor connection $\nabla^{\theta}$ are both flat, implying that their covariant exterior derivatives $d^V$ and $d^{\theta}$ both induce a twisted deRham complex. Subsequently, we define the \emph{covariant codifferential} $\delta^V \colon \Omega^k(G/K, V) \to \Omega^{k-1}(G/K,V)$ to be the formal adjoint to $d^V$ with respect to the $L^2$-inner product, which can also be described by $\delta^V = (-1)^k \ast^{-1} d^{\theta} \ast$. Finally, we define the \emph{covariant Laplace} by $\Delta^V := d^V \delta^V + \delta^V d^V$, which by construction is formally selfadjoint with respect to the $L^2$-inner product.

\begin{thm}\label{thm_casimir_laplace}
 Let $G$ be a semisimple Lie group with finite centre and Lie algebra $\xg$ and $K \subset G$ a maximal compact subgroup. Let $\mv$ be a $G$-representation with corresponding tractor bundle $V$ over $G/K$. Let $D_{\cc} \colon \Omega^*(G/K, V) \to \Omega^*(G/K,V)$ be the differential operator induced by the Casimir operator $\cc$ on $\xg$ and $\tau_{\cc}^V$ the tensorial map on $\Omega^*(G/K,V)$ which is induced by the action of $\cc$ on $\mv$ tensorized with the identity on $\Lambda^k \xq^*$. Then for all $\alpha \in \Omega^k(G/P,V)$ we have
 \begin{align*}
  D_{\cc}\alpha = -\Delta^V\alpha + \tau_{\cc}^V(\alpha).
 \end{align*}
 \end{thm}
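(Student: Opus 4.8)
The plan is to reduce the statement to a Weitzenböck-type identity on $G/K$ relating the Casimir operator to the Bochner Laplacian of the canonical connection, and then identify the Bochner Laplacian with $-\Delta^V + \tau^V_{\cc}$ up to the zero-order term coming from the difference between the canonical connection and the tractor connection. Concretely, I would first recall that for any $G$-invariant connection on a homogeneous bundle over $G/K$, the Casimir operator $D_{\cc}$ acting on sections decomposes, after passing to the $K$-equivariant function picture $f\colon G\to\mv$, as the right-invariant second-order operator $\sum X_i^2$ summed over an orthonormal basis of $\xg$ with respect to the Killing form. Splitting this basis along the Cartan decomposition $\xg=\xk\oplus\xq$, the $\xq$-part gives precisely the Bochner Laplacian $\nabla^*\nabla$ of the canonical connection (since $Q$ is the horizontal distribution defining it), while the $\xk$-part acts tensorially through the $K$-representation on $\mv$ — this is exactly the mechanism producing $\tau^V_{\cc}$, once one accounts for the sign conventions built into the Killing form restricted to $\xk$ (negative definite) versus $\xq$ (positive definite). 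This step is essentially the classical computation of Casimir in terms of the canonical connection on a naturally reductive homogeneous space.

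Next I would upgrade this from functions/sections to the full de Rham complex: since $T^*(G/K)\cong G\times_K\xq^*$, a $V$-valued $k$-form is a section of $G\times_K(\Lambda^k\xq^*\otimes\mv)$, so the same decomposition of $\cc$ applies, now with $\mv$ replaced by $\Lambda^k\xq^*\otimes\mv$. The $\xk$-part again acts tensorially; because $\Lambda^k\xq^*$ is a $K$-module on which $\cc$ acts, I must check that the tensorial contribution is still exactly $\tau^V_{\cc}$ as defined in the statement — i.e. that the $\Lambda^k\xq^*$-factor contributes nothing net. This should follow from the fact that the relevant cross-terms reorganize: the Casimir on $\Lambda^k\xq^*\otimes\mv$ equals $\cc\otimes 1 + 1\otimes\cc + 2\sum(\text{mixed})$, and the mixed terms together with the $1\otimes\cc^{(\Lambda^k\xq^*)}$ piece assemble into the curvature/Weitzenböck correction terms that convert the Bochner Laplacian of the canonical connection into $d^V\delta^V+\delta^V d^V$ of the tractor connection.

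The main obstacle — and where the real work lies — is controlling the passage from the canonical connection to the tractor connection. The canonical connection need not be flat and need not equal $\nabla^V$; its difference is a $G$-invariant one-form with values in $\End(V)$, governed by the $\xk$-component of the $\xg$-action. I would handle this exactly as in the proof of Theorem \ref{thm_casimir_box} (the parabolic analogue), invoking \cite{cap_soucek_casimir}: the key algebraic input is that $d^V$ and $\delta^V$ can both be written in the function picture using the $\xg$-action restricted appropriately, so that $d^V\delta^V+\delta^V d^V$ assembles the same second-order operator $\sum_{\xq} X_i^2$ plus precisely the zero-order term $\tau^V_{\cc}$ minus $D_{\cc}$. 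In other words, the cleanest route is probably not to go through the Bochner Laplacian at all, but to mimic \cite[Corollary 1]{cap_soucek_casimir} directly: expand $d^V$ and $\delta^V = (-1)^k\ast^{-1}d^\theta\ast$ in terms of the actions of $\xq$ on $\mv$ and on $\Lambda^*\xq^*$, using that $\nabla^\theta$ is the $h$-adjoint of $\nabla^V$ and that $\theta$ interchanges the roles consistently, then recognize the sum $d^V\delta^V+\delta^V d^V$ as $-\sum_{X_i\in\xq}X_i\cdot X_i + (\text{tensorial})$ acting on the function picture, and finally add and subtract the $\xk$-part to complete it to the full Casimir $D_{\cc}$, leaving the tensorial remainder equal to $\tau^V_{\cc}$. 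The sign $-\Delta^V$ (rather than $+\Delta^V$) is forced by the positive-definiteness of the Killing form on $\xq$ together with $\Delta^V$ being a nonnegative operator, and this is the consistency check I would run at the end.
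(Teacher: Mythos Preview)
The paper does not actually prove this theorem; its entire proof is a one-line citation to \cite[p.~385]{matsushima_murakami63} (the case $\Gamma=\{e\}$). Your proposal is, in outline, precisely the Matsushima--Murakami computation behind that citation: pass to the $K$-equivariant function picture, split the Casimir along $\xg=\xk\oplus\xq$, and identify the $\xq$-part with the second-order piece of $d^V\delta^V+\delta^V d^V$ while the remaining zero-order terms assemble into $\tau_{\cc}^V$. So at the level of strategy you are aligned with the source the paper defers to.

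That said, your sketch is honest about where the content lies and does not yet discharge it. The passage ``the mixed terms together with the $1\otimes\cc^{(\Lambda^k\xq^*)}$ piece assemble into the curvature/Weitzen\-b\"ock correction terms'' is exactly the computation that constitutes the proof, and it does not follow by analogy with \cite{cap_soucek_casimir}: in the parabolic setting of Theorem~\ref{thm_casimir_box} the operator $\partial^*$ is algebraic, so $\Box^R$ is first order and the bookkeeping is much lighter, whereas here both $d^V$ and $\delta^V$ are first-order differential operators and one must genuinely track the commutators $[\xq,\xq]\subset\xk$ that produce the curvature of the canonical connection. Your ``cleanest route'' is the right one --- write $d^V$ and $\delta^V=(-1)^k\ast^{-1}d^{\theta}\ast$ explicitly in terms of left-invariant vector fields along $\xq$ and the $\xq$-action on $\mv$, then compute $d^V\delta^V+\delta^V d^V$ directly --- but to turn the proposal into a proof you must actually carry out that expansion and verify that the $\xk$-contributions from $[\xq,\xq]$ cancel against the $\xk$-summand of the Casimir, leaving only $\tau_{\cc}^V$. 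This is done in full in \cite{matsushima_murakami63}; if you want a self-contained argument, that is the calculation to reproduce.
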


\begin{proof} \cite[p.385]{matsushima_murakami63} in the case $\Gamma = \{e\}$.
 \end{proof}

\begin{rem}
 Note that in the case of scalar valued differential forms Theorem \ref{thm_casimir_laplace} degenerates to the well-known fact that $D_{\cc}$ is the negative of the Laplace-Beltrami operator.
\end{rem}

\section{Poisson transforms and their properties}\label{sec_poisson_transform}

\subsection{Equivariant operators and the BGG-complex}\label{sec_equivariant_operators}
Throughout this section let $G$ be a semisimple Lie group with finite centre, $K$ a maximal compact subgroup and $P$ a parabolic subgroup of $G$. For homogeneous vector bundles $V \to G/P$ and $W \to G/K$ we consider $G$-equivariant continuous linear operators 
\begin{align*}
 \Phi \colon \Omega^k(G/P, V) \to \Omega^{\ell}(G/K,W)
\end{align*}
which we will use to relate the geometries of $G/K$ and $G/P$. In particular, if $V$ is a tractor bundle we focus on equivariant operators $\Phi$ as above which naturally descend to the BGG-complex.

Recall that the image of the splitting operator $L \colon \Gamma(\ch_k(G/P,V)) \to \Gamma(\ker(\partial^*))$ is the unique representative of a homology class contained in the kernel of the curved box operator $\Box^R$. Therefore, we could assume that $\Phi$ is trivial on the kernel of $\Box^R$, as this ensures to obtain an induced operator on sections of the homology bundles. However, this forces us to compute the splitting operator explicitly, which is a tedious task in general. Thus, we will instead consider an operator $\Phi$ which is trivial on the image of the Kostant codifferential and therefore factors to a $G$-equivariant linear operator
\begin{align*}
 \underline{\Phi} \colon \Gamma(\ch_k(G/P,V)) \to \Omega^{\ell}(G/K,W).
\end{align*}
Furthermore, we also assume that the composition $\Phi \circ d^V \circ \partial^*$ is trivial, as this enables us to compute the image of $D^V\sigma$ for $\sigma \in \Gamma(\ch_k(G/P,V))$ without the use of the splitting operator. Indeed, for any $\alpha \in \Gamma(\ker(\partial^*))$ representing $\sigma$ we can find a $V$-valued $(k-1)$-form $\beta$ on $G/P$ such that $L(\sigma) = \alpha + \partial^*\beta$. By definition of the BGG-operator we obtain that
\begin{align*}
 D^V\sigma = \pi(d^VL(\sigma)) = \pi(d^V\alpha) + \pi(d^V\partial^*\beta),
\end{align*}
where $\pi \colon \Gamma(\ker(\partial^*)) \to \Gamma(\ch_k(G/P,V))$ is the canonical projection, and since $\underline{\Phi} \circ \pi$ coincides with $\Phi$ this implies that $\underline{\Phi}(D^V\sigma) = \Phi(d^V\alpha)$. 

The following Theorem shows that the two ways of descending the operator $\Phi$ to the BGG-complex are equivalent as well as the geometric significance for the corresponding symmetric space.

\begin{thm}\label{thm_harmonic_bgg}
 Let $G$ be a semisimple Lie group with finite centre and Lie algebra $\xg$, $K \subset G$ a maximal compact subgroup and $P \subset G$ a parabolic subgroup. Let $\mv$ be a $G$-representation and let $V_K$ and $V_P$ the corresponding tractor bundles over $G/K$ and $G/P$, respectively. Let $\Phi \colon \Omega^k(G/P, V_P) \to \Omega^{\ell}(G/K,V_K)$ be a $G$-equivariant linear operator which is also equivariant with respect to the induced $\xg$-actions. Then the following are equivalent:
 \begin{enumerate}[(i)]
  \item The operator $\Phi$ satisfies $\Phi \circ \partial^* = 0$ and $\Phi \circ d^{V_P} \circ \partial^* = 0$,
  \item The operator $\Phi$ is trivial on the image of $\Box^R$,
  \item The image of $\Phi$ is contained in the kernel of $\Delta^{V_K}$.
  \end{enumerate}
\end{thm}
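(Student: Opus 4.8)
The plan is to prove the cycle of implications $(i) \Rightarrow (ii) \Rightarrow (iii) \Rightarrow (i)$, using the two Casimir formulas (Theorems \ref{thm_casimir_box} and \ref{thm_casimir_laplace}) as the bridge between the geometry on $G/P$ and the geometry on $G/K$. The key observation is that the operator $D_{\cc}$ induced by the Casimir element commutes with $\Phi$, since $\Phi$ is assumed $\xg$-equivariant and $D_{\cc}$ is built from the $\xg$-action on the underlying representation. Since $V_P$ and $V_K$ come from the \emph{same} $G$-representation $\mv$, the tensorial operators $\tau_{\cc}^{V_P}$ and $\tau_{\cc}^{V_K}$ act by the action of $\cc$ on $\mv$; but on the $G/P$ side $\tau_{\cc}^{V_P}$ also carries the identity on $\Lambda^k \xp_+$, so the two tensorial pieces need not match under $\Phi$ in an obvious way, and handling this is where one must be slightly careful.

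First I would prove $(i) \Rightarrow (ii)$. By Theorem \ref{thm_casimir_box}, on $\Omega^k(G/P, V_P)$ we have $2\Box^R = D_{\cc} - \tau_{\cc}^{V_P}$. If $(i)$ holds, then $\Phi \circ \partial^* = 0$ and $\Phi \circ d^{V_P} \circ \partial^* = 0$, hence $\Phi \circ \Box^R = \Phi \circ (d^{V_P}\partial^* + \partial^* d^{V_P}) = 0$ already on the $\partial^* d^{V_P}$ term via the first relation and on the $d^{V_P}\partial^*$ term via the second, so the image of $\Box^R$ lies in $\ker \Phi$; this is immediate and does not even need the Casimir identity. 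For $(ii) \Rightarrow (i)$ I would instead argue that $\Box^R$ commutes with $\partial^*$ up to the decomposition into $\partial^*$-degrees: since $\partial^* \Box^R = \partial^* d^{V_P} \partial^*$ and $\Box^R \partial^* = \partial^* d^{V_P} \partial^*$, one has $\partial^* \Box^R = \Box^R \partial^*$, and $\im(\partial^*)$ together with $\ker(\partial^*)$ give the Hodge-type decomposition relative to $\Box^R$ in each form-degree. The cleanest route, though, is to get $(i)$ from $(iii)$ together with $(ii)$ using the Hodge decomposition of $\Omega^k(G/P, V_P)$ into $\im(\partial^*) \oplus \ker(\Box^R) \oplus \im(d^{V_P})$, as in \cite[Proposition 5.5]{calderbank_diemer}: on $\im(\partial^*)$ and on $d^{V_P}\im(\partial^*)$ the operator $\Phi$ vanishes.

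The central step is $(ii) \Leftrightarrow (iii)$, and here is where the Casimir identities do the work. Take $\alpha \in \Omega^k(G/P, V_P)$ and write, using Theorem \ref{thm_casimir_box}, $D_{\cc}\alpha = 2\Box^R\alpha + \tau_{\cc}^{V_P}\alpha$. Apply $\Phi$ and use $\Phi D_{\cc} = D_{\cc}\Phi$ to get $D_{\cc}(\Phi\alpha) = 2\Phi(\Box^R\alpha) + \Phi(\tau_{\cc}^{V_P}\alpha)$. On the other hand, by Theorem \ref{thm_casimir_laplace}, $D_{\cc}(\Phi\alpha) = -\Delta^{V_K}(\Phi\alpha) + \tau_{\cc}^{V_K}(\Phi\alpha)$. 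Equating and rearranging yields
\begin{align*}
 \Delta^{V_K}(\Phi\alpha) = -2\Phi(\Box^R\alpha) - \Phi(\tau_{\cc}^{V_P}\alpha) + \tau_{\cc}^{V_K}(\Phi\alpha).
\end{align*}
The remaining point is that the tensorial terms cancel, i.e. $\Phi \circ \tau_{\cc}^{V_P} = \tau_{\cc}^{V_K} \circ \Phi$. I would argue this from the structure of $\Phi$ as a $G$-equivariant operator whose kernel (to be introduced in section \ref{sec_poisson_transform}) is built from an invariant element of a finite-dimensional representation: the action of the Casimir of $\xg$ on the target representation $\mv$ is the ``same'' on both sides because it factors through the $G$-representation $\mv$, not through the form-degree; more precisely, $\tau_{\cc}^{V_P}$ and $\tau_{\cc}^{V_K}$ are both induced by $\cc$ acting on $\mv$, and $\Phi$ intertwines the fibrewise $\xg$-module structures. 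Once this is in hand, $\Delta^{V_K}\Phi\alpha = -2\Phi(\Box^R\alpha)$, so $\Phi(\im\Box^R)=0$ exactly when $\Delta^{V_K}$ kills the image of $\Phi$ — giving $(ii)\Leftrightarrow(iii)$ — and combined with the Hodge decompositions on both sides this closes the loop with $(i)$.

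\textbf{The main obstacle} I anticipate is the identity $\Phi \circ \tau_{\cc}^{V_P} = \tau_{\cc}^{V_K} \circ \Phi$: a priori $\tau_{\cc}^{V_P}$ carries a spurious identity factor on $\Lambda^k \xp_+$ that $\tau_{\cc}^{V_K}$ does not see, and one has to be sure that the Casimir really acts only through the $\mv$-slot in both cases (this is exactly what the statements of Theorems \ref{thm_casimir_box} and \ref{thm_casimir_laplace} encode — the $\tau_{\cc}^V$ there is defined as identity-on-forms tensor $\cc$-on-$\mv$). Granting that, the argument is essentially formal manipulation of the two Casimir formulas plus the Hodge decomposition, and the only genuinely new input is that $\Phi$ is $\xg$-equivariant so that $D_{\cc}$ passes through it.
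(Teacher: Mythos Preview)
Your approach is the paper's: $(i)\Rightarrow(ii)$ is immediate from the definition of $\Box^R$, and $(ii)\Leftrightarrow(iii)$ is exactly the Casimir sandwich you describe, with the paper likewise asserting $\Phi\circ\tau_{\cc}^{V_P}=\tau_{\cc}^{V_K}\circ\Phi$ ``by $\xg$-equivariance of $\Phi$'' without further elaboration. For $(ii)\Rightarrow(i)$ the paper is crisper than your somewhat tangled Hodge-decomposition sketch (which invokes $(iii)$ without actually using it, and states a decomposition $\im(\partial^*)\oplus\ker(\Box^R)\oplus\im(d^{V_P})$ that is not quite what \cite{calderbank_diemer} provides): it directly uses the invertibility of $\Box^R$ on $\im(\partial^*)$ from \cite[Theorem 5.2]{calderbank_diemer}, writing $\Phi\circ\partial^*=\Phi\circ\Box^R\circ(\Box^R)^{-1}\circ\partial^*=0$, whence $\Phi\circ d^{V_P}\circ\partial^*=\Phi\circ\Box^R-\Phi\circ\partial^*\circ d^{V_P}=0$ as well.
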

\begin{proof}
 (i) $\Leftrightarrow$ (ii): By definition of $\Box^R$ the implication $(i) \Rightarrow (ii)$ is trivial. Conversely, by \cite[Theorem 5.2]{calderbank_diemer} the box operator is invertible on the image of $\partial^*$, implying
 \begin{align*}
  \Phi \circ \partial^* = \Phi \circ \Box^R \circ (\Box^R)^{-1} \circ \partial^* = 0,
 \end{align*}
and hence also $\Phi \circ d^{V_P} \circ \partial^* = 0$.

(ii) $\Leftrightarrow$ (iii): Let $\tau_{\cc}^{V_K}$ and $\tau_{\cc}^{V_P}$ denote the $G$-equivariant tensorial maps on $\Omega^\ell(G/K,V_K)$ and $\Omega^{k}(G/P,V_P)$, respectively, which are induced by the action of the Casimir element $\cc$ of $\xg$ on $\mv$. Since the operator $\Phi$ is $\xg$-equivariant it commutes with the differential operator $D_{\cc}$, so by combining Theorem \ref{thm_casimir_box} and Theorem \ref{thm_casimir_laplace} we obtain that
\begin{align*}
 2 \Phi(\Box^R\alpha) + \Phi(\tau_{\cc}^{V_P}\alpha) = \Phi(D_{\cc}\alpha) = D_{\cc}\Phi(\alpha) = -\Delta^{V_K}\Phi(\alpha) + \tau_{\cc}^{V_K}(\Phi(\alpha))
\end{align*}
for all $\alpha \in \Omega^k(G/P, V)$. Finally, we have $\tau_{\cc}^{V_K} \circ \Phi = \Phi \circ \tau_{\cc}^{V_P}$ by $\xg$-equivariance of $\Phi$, implying that $\Delta^{V_K}\Phi(\alpha) = 0$ if and only if $\Phi(\Box^R\alpha) = 0$.
\end{proof}

\subsection{Definition of Poisson transforms}\label{sec_definition_poisson_transform}
Following the previous section, we now present a way of constructing linear $G$-equivariant integral operators between vector bundle valued differential forms on $G/P$ and $G/K$. Consider the product manifold $\cp := G/K \times G/P$, which is endowed with a canonical $G$-action from the left, and denote the canonical projection from $\cp$ onto $G/K$ and $G/P$ by $\pi_K$ and $\pi_P$, respectively. The product structure induces a pointwise decomposition
\begin{align*}
 \Lambda^k T^*\cp &= \bigoplus_{p+q=k} \Lambda^{p,q} T^*\cp \cong \bigoplus_{p+q=k} \Lambda^p T^*(G/K)\otimes \Lambda^q T^*(G/P),
\end{align*}
the latter isomorphism being induced by the canonical projections. We denote by $\Omega^{p,q}(\cp)$ the space of sections of $\Lambda^{p,q}T^*\cp$ and say that its elements are of \emph{bidegree $(p,q)$} (or shortly, \emph{$(p,q)$-forms}). Note that by construction the wedge product is compatible with the bigrading and that forms with different bidegree are linearly independent. Similarly, if $E$ is a vector bundle over $\cp$ we can define the space $\Omega^{p,q}(\cp, E)$ of $E$-valued differential forms on $\cp$ of bidegree $(p,q)$ in the same way. 

Let $\mv$ be a finite dimensional $P$-representation, $\mw$ a finite dimensional $K$-representation and denote their associated homogeneous vector bundles by $V := G \times_K \mv$ and $W := G \times_P \mw$, respectively. Then the tensor bundle $E := \pi_K^*W \otimes \pi_{P}^*V^*$ is naturally a vector bundle over $\cp$. If $n$ denotes the dimension of the compact manifold $G/P$, we define for all $\phi \in \Omega^{\ell, n-k}(\cp, E)$ a linear operator
\begin{align*}
 \Phi &\colon \Omega^k(G/P, V) \to \Omega^{\ell}(G/K, W), & \alpha &\mapsto \fint_{G/P} \phi \wedge \pi_{P}^*\alpha.
\end{align*}
Here the wedge product is formed via the canonical pairing between the vector bundles $E$ and $\pi_{P}^*V$. Furthermore, it is easy to see that the operator $\Phi$ is $G$-equivariant if and only if its corresponding kernel $\phi$ is a $G$-invariant differential form.

\begin{defin}\label{def_poisson_transform}
 If the integral operator $\Phi$ is $G$-equivariant we call $\Phi$ a \emph{Poisson transform} and $\phi$ its corresponding \emph{Poisson kernel}.
\end{defin}
%
%

Thus, in order to produce Poisson transforms we need to find $G$-invariant $E$-valued differential forms on $\cp$, which can be reduced to computations in finite dimensional representations. Indeed, the maximal compact subgroup $K$ acts transitively on $G/P$, implying that the group $G$ acts transitively on the product $\cp$. In particular, if we denote by $M$ the intersection of $K$ and $P$ we see that $\cp$ is isomorphic to $G/M$. Furthermore, denoting the induced projections from $G/M$ to $G/P$ and $G/K$ by the same symbols, a moment of thought shows that the vector bundle $E = \pi_K^*W \otimes \pi_P^*V^*$ over $G/M$ is the homogeneous bundle associated to the $M$-representation $\me := \mw \otimes \mv^*$. All in all, we see that Poisson kernels can be viewed as $G$-invariant elements in $\Omega^*(G/M, E)$, which in turn correspond to $M$-invariant elements in the underlying representation $\Lambda^* (\xg/\xm)^* \otimes \me$.

\begin{thm}\label{thm_bijective_correspondence_transforms_kernel}
Let $\xg$ and $\xm$ denote the Lie algebras of $G$ and $M$, respectively. Then there is a bijective correspondence
\begin{align*}
 \left\lbrace \substack{\text{Poisson transforms} \\ \Phi \colon \Omega^k(G/P, V) \to \Omega^\ell(G/K, W)}\right\rbrace \Leftrightarrow \left\lbrace \substack{M-\text{invariant elements} \\\text{in } \Lambda^k (\xg/\xm)^* \otimes \me}\right\rbrace.
\end{align*}
\end{thm}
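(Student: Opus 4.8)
The plan is to factor the desired bijection through the intermediate notion of a Poisson kernel, turning it into a composite of three identifications, only the first of which will require an argument. The starting point is the observation already recorded before Definition~\ref{def_poisson_transform}: every Poisson transform is of the form $\Phi = \Phi_\phi$ for a $G$-invariant differential form $\phi$ of bidegree $(\ell, n-k)$ on $\cp$, and conversely every such $\phi$ produces a Poisson transform. So the first --- and only substantial --- step is to prove that the assignment $\phi \mapsto \Phi_\phi$ is injective, i.e.\ that a $G$-invariant kernel is determined by its operator.

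To establish this injectivity I would reconstruct $\phi$ from $\Phi_\phi$. Since $\phi$ is $G$-invariant over the homogeneous space $\cp \cong G/M$, it is determined by its value at the base point, and by its bidegree that value is in turn pinned down by the numbers $\phi(\xi_1, \dotsc, \xi_\ell, \zeta_1, \dotsc, \zeta_{n-k})$ with the $\xi_i$ tangent to the $G/K$-factor and the $\zeta_j$ tangent to the $G/P$-factor. Unwinding the definition of fibre integration, evaluating $\Phi_\phi(\alpha)$ at $eK$ on $\xi_1, \dotsc, \xi_\ell$ gives $\int_{G/P} \beta_\xi \wedge \alpha$, where $\beta_\xi \in \Omega^{n-k}(G/P, \mw \otimes V^*)$ is obtained by contracting $\phi$ with lifts of the $\xi_i$ and restricting to the fibre $\pi_K^{-1}(eK) \cong G/P$ (the terms in which a $\xi_i$ is inserted into $\pi_P^*\alpha$ vanish for bidegree reasons). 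Hence $\Phi_\phi = 0$ forces $\int_{G/P} \beta_\xi \wedge \alpha = 0$ for every test form $\alpha \in \Omega^k(G/P, V)$, and by nondegeneracy of the wedge-and-integrate pairing on the compact manifold $G/P$ --- a routine bump-function argument using that the pointwise pairings $\Lambda^{n-k}T^* \times \Lambda^{k}T^* \to \Lambda^n T^*$ and $V^* \times V \to \mr$ are perfect --- this yields $\beta_\xi = 0$ for all $\xi$; evaluating the $\beta_\xi$ on vertical vectors at the base point then gives $\phi = 0$.

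The two remaining identifications are routine. Because the maximal compact subgroup $K$ already acts transitively on $G/P$, the group $G$ acts transitively on $\cp$ with isotropy $M = K \cap P$, so $\cp \cong G/M$ and $E$ is the homogeneous bundle associated with the $M$-representation $\me = \mw \otimes \mv^*$; the equivalence of categories between homogeneous bundles and finite-dimensional representations recalled in Section~\ref{sec_invariant_connections_cov_ext} then identifies $G$-invariant kernels of bidegree $(\ell, n-k)$ with the $M$-invariant elements of the fibre $\Lambda^\ell T^*_{eK}(G/K) \otimes \Lambda^{n-k}T^*_{eP}(G/P) \otimes \me$. Finally, as $M$-modules one has $T^*_{eP}(G/P) \cong (\xg/\xp)^* \cong \xp_+$ and $T^*_{eK}(G/K) \cong \xq^*$, and $\xg/\xm \cong \xq \oplus (\xg/\xp)$ --- using $\xm \subset \xk$, so $\xg/\xm = (\xk/\xm) \oplus \xq$, together with the $M$-equivariant isomorphism $\xk/\xm \cong \xg/\xp$ coming from $K/M \cong G/P$ --- whence $(\xg/\xm)^* \cong \xq^* \oplus \xp_+$ and the above fibre is precisely the bidegree-$(\ell, n-k)$ component of $\Lambda^{\bullet}(\xg/\xm)^* \otimes \me$; composing with the identifications of the previous two steps completes the proof. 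The only genuine obstacle is the injectivity of the first step, and it disappears once one reduces, via $G$-invariance, to a single fibre and invokes the nondegeneracy of the form pairing over the compact manifold $G/P$.
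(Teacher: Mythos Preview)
Your approach coincides with the paper's: identify $\cp$ with $G/M$ using transitivity of $K$ on $G/P$, recognise $E$ as the homogeneous bundle for $\me = \mw \otimes \mv^*$, and invoke the correspondence between $G$-invariant sections and $M$-invariant elements of the fibre. The paper in fact states the theorem without a separate proof, relying on the discussion in the preceding paragraph and implicitly treating a Poisson transform as coming \emph{with} its kernel (cf.\ Definition~\ref{def_poisson_transform}); your explicit injectivity argument---recovering $\phi$ at the base point from $\Phi_\phi$ via nondegeneracy of the wedge-and-integrate pairing on the compact fibre $G/P$---is a genuine addition that the paper does not supply, and it is correct. One minor point: your decomposition $\xg/\xm \cong \xq \oplus (\xg/\xp)$ is stated slightly backwards (you want the $\xk/\xm$ summand to correspond to the $G/P$-direction and a complement isomorphic to $\xp/\xm$ to correspond to the $G/K$-direction, since $\pi_K^{-1}(eK) \cong K/M$ while $\pi_P^{-1}(eP) \cong P/M$); the paper handles this simply by working with the bigrading on $\Lambda^*(\xg/\xm)^*$ without spelling out which summand is which, and your argument goes through once the roles are swapped.
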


\begin{rem}
The pullback of a section of a line bundle $L$ over $G/P$ along $\pi_P$ is a smooth function on $G/M$. Therefore, each Poisson kernel $\phi \in \Omega^{\ell, n-k}(\cp, E)$ induces a family of $G$-equivariant operators $\Omega^k(G/P, V\otimes L) \to \Omega^{\ell}(G/K, W)$ by the same formula.
\end{rem}


\subsection{Compatibility with differential operators}\label{sec_poisson_differential_operators}
In this section we will prove compatibility results between Poisson transforms and several differential operators acting on vector valued differential forms on $G/P$ and $G/K$, respectively, which can be expressed in terms of the corresponding Poisson kernel. Since we are mostly interested Poisson transforms which descend to the BGG-complex we will mainly focus on the case of tractor bundle valued differential forms in the sequel. 

Let $\mv$ and $\mw$ be $G$-representations with associated tractor bundles $V \to G/P$ and $W \to G/K$. The pullbacks $V_M$ of $V$ along $\pi_P$ and $W_M$ of $W$ along $\pi_K$ are both tractor bundles over $G/M$ with the same underlying vector spaces, regarded as $M$-representations. In particular, the tensor product $E = W_M \otimes V_M^*$ is also a tractor bundle over $G/M$ with underlying $G$-representation $\mw \otimes \mv^*$.

Recall from section \ref{sec_parabolic} that the Kostant codifferential $\partial^*$ on $\Omega^*(G/P,V^*)$ is a $G$-equivariant tensorial operator, so it can be lifted to a $G$-equivariant operator $\partial_P^*$ on $\Omega^{0,*}(G/M, V_M^*)$ via the projection $\pi_P$. Moreover, for $\alpha \in \Omega^{p,0}(G/M, W_M)$ and $\beta \in \Omega^{0,q}(G/M, V_M^*)$ we define $\partial_P^*$ on their wedge product by $\partial_P^*(\alpha \wedge \beta) := (-1)^p \alpha \wedge (\partial_P^*\beta)$. By linear extension this defines a $G$-equivariant tensorial operator 
\begin{align*}
 \partial_P^* \colon \Omega^{p,q}(G/M, E) \to \Omega^{p,q-1}(G/M, E),
\end{align*}
which we call the \emph{$P$-codifferential}.

Similarly, the Hodge star operator $\ast$ on $W$-valued differential forms on $G/K$ is $G$-equivariant and tensorial and thus induces an operator $\ast_K$ on $\Omega^{p,0}(G/M, W_M)$. Furthermore, for $\alpha \in \Omega^{p,0}(G/M, W_M)$ and $\beta \in \Omega^{0,q}(G/M, V_M^*)$ we define $\ast_K(\alpha \wedge \beta) := (\ast_K\alpha) \wedge \beta$ and extend this linearly to a $G$-equivariant operator
\begin{align*}
 \ast_K \colon \Omega^{p,q}(G/M, E) \to \Omega^{\dim(G/K)-p, q}(G/M, E),
\end{align*}
which we call the \emph{$K$-Hodge star}.

Next, the pullbacks of the tractor connections on $V^*$ and $W$ are $G$-invariant flat connections on $V_M^*$ and $W_M$. Thus, their tensor product is $G$-invariant connection $\nabla^E$ on the tractor bundle $E$ over $G/M$, which by naturality coincides with the tractor connection. The induced covariant exterior derivative $d^{E}$ can be decomposed into partial derivatives $d^{E} = d_K + d_P$, where the first and second operator map differential forms of bidegree $(p,q)$ to forms of bidegree $(p+1,q)$ and $(p, q+1)$, respectively. The partial derivatives will again be $G$-equivariant differential operators by construction, and we call $d_K$ the \emph{$K$-derivative} or \emph{$K$-differential} and similarly for $d_P$. 

Furthermore, recall from section \ref{sec_symmetric_space} that $W$ is also endowed with the twisted tractor connection $\nabla^{\theta}$, which is also $G$-invariant by construction. Therefore, we obtain an additional $G$-invariant connection $\nabla^{E,\theta}$ on $E$ and a decomposition of the induced covariant exterior derivative $d^{E,\theta} = d_K^{\theta} + d_P^{\theta}$ into partial derivatives. In regards of the definition of the twisted tractor connection it follows that the restrictions of $\nabla^E$ and $\nabla^{E,\theta}$ to the subbundle $\ker(T\pi_K) = G \times_M (\xk/\xm)$ of $T(G/M)$ coincide, which in turn implies that the partial derivatives $d_P$ and $d_P^{\theta}$ coincide.

Combining $d_K^{\theta}$ with the $K$-Hodge star we define the \emph{$K$-codifferential} $\delta_K$ on $E$-valued $(p,q)$-forms on $G/M$ by $\delta_K := (-1)^p \ast_K^{-1} d^{\theta}_K \ast_K$, and subsequently the \emph{$K$-Laplace} by $\Delta_K := d_K\delta_K + \delta_K d_K$, which are both $G$-invariant differential operators by construction.

All the differential operators on $\Omega^*(G/M, E)$ defined above are $G$-equivariant and thus have $M$-equivariant counterparts on the level of the underlying representations, which we will denote by the same symbols.

\begin{thm}\label{thm_Poisson_transform_differential_operators}
 Let $G$ be a semisimple Lie group with finite centre, $K$ its maximal compact subgroup, $P \subset G$ a parabolic subgroup and $V \to G/P$ and $W \to G/K$ be tractor bundles. Let $\Phi \colon \Omega^k(G/P,V) \to \Omega^\ell(G/K,W)$ be a Poisson transform with underlying Poisson kernel $\phi$ of bidegree $(\ell,n-k)$.
 \begin{enumerate}[(i)]
  \item The compositions $\Phi \circ d^V$ and $\Phi \circ \partial^*$ are Poisson transforms with kernels $(-1)^{n-k+\ell+1} d_P\phi$ and $(-1)^{n-k+\ell} \partial_P^*\phi$, respectively.
  \item The compositions $d^W \circ \Phi$, $\ast \circ \Phi$, $\delta^W \circ \Phi$ and $\Delta^W \circ \Phi$ are Poisson transforms with kernels $d_K\phi$, $\ast_K\phi$, $\delta_K\phi$ and $\Delta_K\phi$, respectively.
 \end{enumerate}
 Moreover, we have $\ast_K d_P = (-1)^{n+1} d_P\ast_K$ as well as $\ast_K \partial_P^* = (-1)^{n+1} \partial_P^* \ast_K$. In particular, both the $P$-differential and $P$-codifferential anticommute with $d_K$, $\delta_K$ and $\Delta_K$.
\end{thm}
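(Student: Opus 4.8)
The plan is to compute each composition by unwinding the integral formula $\Phi(\alpha) = \fint_{G/P} \phi \wedge \pi_P^*\alpha$ and moving the relevant differential operator across the wedge product, much as one verifies that the ordinary Poisson transform intertwines operators via integration by parts and Leibniz rules on the double fibration $G/M$. Throughout I will freely use that all the operators $d_K$, $d_P$, $\partial_P^*$, $\ast_K$, $\delta_K$ are $G$-equivariant, so it suffices to identify the kernels; by Theorem \ref{thm_bijective_correspondence_transforms_kernel} a Poisson transform is determined by its kernel, so each claimed identity is equivalent to an identity of $M$-invariant elements.

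For part (i), the key input is Proposition \ref{prop_Kostant_codifferential_self_adjoint} together with the observation that $d^{V_P}$ pulls back to $d_P$ and $\partial^*$ pulls back to $\partial_P^*$ along $\pi_P$. For the $\partial^*$-statement, I would write $\Phi(\partial^*\alpha) = \fint_{G/P}\phi \wedge \pi_P^*(\partial^*\alpha)$ and apply the self-adjointness identity (in the $G/M$-picture, using that the wedge product is formed via the contraction $\mw\otimes\mv^*\otimes\mv\to\mw$) to move $\partial_P^*$ onto $\phi$; since $\phi$ has bidegree $(\ell, n-k)$ the sign bookkeeping gives exactly $(-1)^{n-k+\ell}\partial_P^*\phi$. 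For the $d^V$-statement, one uses $d^E(\phi\wedge\pi_P^*\alpha) = d^E\phi \wedge \pi_P^*\alpha + (-1)^{\ell+n-k}\phi\wedge\pi_P^*(d^V\alpha)$; integrating over the compact fibre $G/P$ and keeping only the component of $d^E\phi\wedge\pi_P^*\alpha$ that has full bidegree $(\ell,n)$ before integration, together with the fact that the $d_K$-part integrates to a genuine exterior derivative on $G/K$ whose contribution vanishes for bidegree reasons after integrating over $G/P$, isolates the $d_P\phi$ term with the stated sign $(-1)^{n-k+\ell+1}$.

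For part (ii), the operators $d^W$, $\ast$, $\delta^W$, $\Delta^W$ all act on the $G/K$-factor, which the integration $\fint_{G/P}$ leaves untouched; hence one simply moves them under the integral sign and recognizes that on the kernel side they become precisely $d_K$, $\ast_K$, $\delta_K$, $\Delta_K$ by the way these operators were defined on $\Omega^{p,q}(G/M,E)$ (acting on the $W_M$-slot and the $\Lambda^p T^*(G/K)$-slot only). The only subtlety is commuting $d^W$ with $\fint_{G/P}$: this is legitimate because $d^W$ is a differential operator in the $G/K$-directions only and the integral is a proper pushforward along a compact fibre, and it matches $d_K$ under the decomposition $d^E = d_K + d_P$; the codifferential and Laplace cases then follow formally from their definitions in terms of $d_K$, $d_K^\theta$ and $\ast_K$.

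Finally, the commutation relations $\ast_K d_P = (-1)^{n+1} d_P \ast_K$ and $\ast_K \partial_P^* = (-1)^{n+1}\partial_P^*\ast_K$ are purely tensorial/local statements on $\Omega^{p,q}(G/M,E)$: $\ast_K$ changes the $(G/K)$-degree from $p$ to $\dim(G/K)-p$ while $d_P$ and $\partial_P^*$ shift only the $(G/P)$-degree, so the two operators essentially commute up to the Koszul sign picked up from the definitions $d_P(\alpha\wedge\beta) = (-1)^p\alpha\wedge d_P\beta$ and $\ast_K(\alpha\wedge\beta) = (\ast_K\alpha)\wedge\beta$ (and similarly for $\partial_P^*$); tracking this sign as $p$ goes to $\dim(G/K)-p$ and using $\dim(G/K)\equiv n+1$-type parity gives the factor $(-1)^{n+1}$. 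The anticommutation with $d_K$, $\delta_K$, $\Delta_K$ is then immediate: $d_K$ and $d_P$ are the two components of the flat covariant derivative $d^E$, so $d_K d_P + d_P d_K$ is (up to the tensorial curvature/bracket terms, which vanish since $d^E$ squares to the curvature of a flat connection, whence $(d^E)^2 = 0$ forces $d_Kd_P+d_Pd_K=0$) zero; the same for $d_P^\theta = d_P$ against $d_K^\theta$, and then $\delta_K$ and $\Delta_K$ inherit the anticommutation formally. The main obstacle I anticipate is the careful sign accounting in part (i) — in particular correctly tracking the Koszul signs generated by commuting $\partial_P^*$ and the fibre-degree $(n-k)$ of $\phi$ past the $(G/K)$-degree $\ell$ — rather than any conceptual difficulty.
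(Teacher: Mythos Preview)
Your approach to parts (i) and (ii) is essentially the paper's: the Leibniz rule for $d^E$, the self-adjointness of $\partial^*$ from Proposition~\ref{prop_Kostant_codifferential_self_adjoint}, the commutation of $d^W$ with the fibre integral, and the bidegree counting that kills the unwanted terms are exactly the ingredients used there. (Your description of which term vanishes ``for bidegree reasons'' in the $d^V$-case is slightly blurred---both the left-hand term $d^W\Phi(\alpha)$ and the $d_K\phi$-term on the right vanish, each because the relevant integrand has $G/P$-degree $n-1$---but the idea is right.)

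The one genuine gap is in your argument for $\ast_K d_P = (-1)^{n+1} d_P\ast_K$. You invoke a formula $d_P(\alpha\wedge\beta) = (-1)^p\alpha\wedge d_P\beta$ for $\alpha$ of bidegree $(p,0)$, but this is false in general: $d_P$ is the $(0,1)$-component of a covariant exterior derivative, not a tensorial operator, and for $\alpha\in\Omega^{p,0}(G/M,W_M)$ the form $d_P\alpha$ is typically nonzero (the coefficients of $\alpha$ may vary in the $G/P$-directions). So the Koszul-sign bookkeeping you sketch does not close up. Note that your argument \emph{does} work for $\partial_P^*$, which is tensorial and for which the analogous formula is the definition, and your $(d^E)^2=0$ argument for $d_Kd_P+d_Pd_K=0$ is correct.

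The paper avoids this difficulty by an indirect route: having established (i) and (ii), it computes the kernel of a composite such as $\ast\circ\Phi\circ d^V$ in the two possible orders and equates the results. Applying (i) then (ii) gives kernel $(-1)^{n-k+\ell+1}\ast_Kd_P\phi$; applying (ii) then (i), noting that $\ast_K\phi$ has bidegree $(n+1-\ell,n-k)$, gives $(-1)^{k+\ell}d_P\ast_K\phi$; comparing yields the commutation relation on Poisson kernels, which is all that is needed downstream. This trick is shorter and sidesteps any local analysis of $d_P$.
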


\begin{proof} Let $V_M$ be the pullback bundle of $V$ along $\pi_P$, let $W_M$ the pullback of $W$ along $\pi_K$ and put $E := W_M \otimes V_M^*$. By definition of the pullback connection, the corresponding covariant exterior derivatives satisfy the relations $\pi_P^* \circ d^V = d^{V_M^*} \circ \pi_P^*$.  Thus, we obtain for all $\alpha \in \Omega^*(G/P,V)$ that 
\begin{align*}
 d^{W_M}(\phi \wedge \pi_P^*\alpha) + (-1)^{n-k+\ell+1} \phi \wedge \pi_P^*d^V\alpha = (d^E\phi) \wedge \pi_P^*\alpha
\end{align*}
Furthermore, using the local expression of the fibre integral in a bundle chart for $G/M \to G/K$ it can be shown that $d^{W} \circ \fint_{G/P} = \fint_{G/P} \circ d^{W_M}$ via an easy generalization of \cite[VII, Proposition X]{greub_halperin_vanstone72}. Therefore, splitting $d_E\phi$ into partial derivatives and integrating both sides of the above equation we deduce
\begin{align}\label{eqn_covariant_derivative_proof}
 d^{W}\Phi(\alpha) + (-1)^{n-k+\ell+1} \Phi(d^V\alpha) = \fint_{G/P} (d_K\phi) \wedge \pi_P^*\alpha + \fint_{G/P} (d_P\phi) \wedge \pi_P^*\alpha.
\end{align}
\begin{enumerate}[(i)]
\item If $\alpha$ is of degree $k-1$ the first summand on the left hand side of (\ref{eqn_covariant_derivative_proof}) is trivial. Moreover, on the right hand side the integrand involving the $K$-derivative is of total bidegree $(\ell+1,n-1)$ and thus vanishes after integration over $G/P$. For the second formula recall that the Kostant codifferential is self-adjoint due to Proposition \ref{prop_Kostant_codifferential_self_adjoint}. Therefore, by definition of the $P$-codifferential we get 
\begin{align*}
 \left(\partial_P^*\phi \right) \wedge \pi_P^*\alpha = (-1)^{n-k+\ell} \phi \wedge \pi_P^*\partial^*\alpha,
\end{align*}
and the claimed formula follows by integrating both sides.

\item If $\alpha$ is of degree $k$ the second summand on the left hand side of equation (\ref{eqn_covariant_derivative_proof}) is trivial, whereas on the right hand side the integrand involving the $P$-derivative is of bidegree $(\ell, n+1)$ and thus vanishes after integration. Next, the Hodge star operator is tensorial, so by the local description of the fibre integral we immediately conclude that $\ast \circ \Phi$ is the Poisson transform associated to the kernel $\ast_K\phi$. Combining these two results the rest follows immediately.
\end{enumerate}
The commutation relations between the differential operators on $G/M$ follow from applying the corresponding operators on $G/K$ and $G/P$ simultaneously to $\Phi$. As an example, if we first apply (i) and then (ii) to the composition $\ast \circ \Phi \circ d^V$, its kernel is given by $(-1)^{n-k+\ell+1}\ast_K d_P\phi$, whereas the other way round it computes as $(-1)^{k+\ell} d_P \ast_K\phi$. 
\end{proof}

\begin{rem}
 After exchanging the differential operators for vector valued differential forms by their analogues on scalar valued forms, similar compatibility results for Poisson transforms between scalar valued differential forms hold, compare with \cite[Proposition 3]{harrach16}.
\end{rem}


As a Corollary we obtain an easy condition when a Poisson transform factors to the BGG-complex on $G/P$, c.f section \ref{sec_equivariant_operators}, which can be phrased entirely in terms of its Poisson kernel.
\begin{cor}\label{cor_bgg_poisson_kernel}
 Let $G$ be a semisimple Lie group with finite centre, $K$ a maximal compact subgroup of $G$, $P \subset G$ a parabolic subgroup and let $V \to G/P$ and $W \to G/K$ be tractor bundles. Then a Poisson transform $\Phi \colon \Omega^k(G/P,V) \to \Omega^\ell(G/K,W)$ factors to the BGG-complex on $G/P$ if and only if its underlying kernel $\phi$ satisfies $\partial_P^*\phi = 0$ and $\partial_P^*d_P\phi = 0$.
\end{cor}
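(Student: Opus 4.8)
The plan is to unwind the definition of ``factoring to the BGG-complex'' from section~\ref{sec_equivariant_operators} and translate each of the two required vanishing conditions on $\Phi$ into the corresponding statement about the Poisson kernel $\phi$ using Theorem~\ref{thm_Poisson_transform_differential_operators}(i). Recall that, by the discussion preceding Theorem~\ref{thm_harmonic_bgg}, a $G$-equivariant operator $\Phi$ descends to a well-defined map $\underline{\Phi} \colon \Gamma(\ch_k(G/P,V)) \to \Omega^\ell(G/K,W)$ compatible with the BGG-complex precisely when the two conditions $\Phi \circ \partial^* = 0$ and $\Phi \circ d^V \circ \partial^* = 0$ of Theorem~\ref{thm_harmonic_bgg}(i) hold: the first guarantees that $\Phi$ is insensitive to the choice of representative in $\ker(\partial^*)$ of a homology class, and the second guarantees (via the argument $\underline{\Phi}(D^V\sigma) = \Phi(d^V\alpha)$ given there) that $\underline{\Phi}$ intertwines $D^V$ with $d^W \circ \underline{\Phi}$ without any reference to the splitting operator.

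Next I would invoke Theorem~\ref{thm_Poisson_transform_differential_operators}(i) twice. That theorem says that $\Phi \circ \partial^*$ is itself a Poisson transform whose kernel is $(-1)^{n-k+\ell}\partial_P^*\phi$, and that $\Phi \circ d^V$ is a Poisson transform with kernel $(-1)^{n-k+\ell+1}d_P\phi$. By the bijective correspondence between Poisson transforms and their kernels (Theorem~\ref{thm_bijective_correspondence_transforms_kernel}, equivalently the fact that $\Phi$ determines $\phi$ since two Poisson kernels inducing the same operator must agree as $M$-invariant elements of $\Lambda^*(\xg/\xm)^* \otimes \me$), the operator $\Phi \circ \partial^*$ vanishes identically if and only if $\partial_P^*\phi = 0$. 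For the second condition, composing first with $\partial^*$ and then with $d^V$ — i.e. applying part~(i) of Theorem~\ref{thm_Poisson_transform_differential_operators} in succession — shows that $\Phi \circ d^V \circ \partial^*$ is the Poisson transform with kernel a nonzero scalar multiple of $d_P\partial_P^*\phi$. However, the statement of the Corollary asks for $\partial_P^*d_P\phi = 0$ rather than $d_P\partial_P^*\phi = 0$, so the genuine content of the argument is to reconcile these; here one uses the last sentence of Theorem~\ref{thm_Poisson_transform_differential_operators}, namely that $\partial_P^*$ anticommutes with $d_K$ and that $d^E = d_K + d_P$ squares to zero on the flat tractor bundle $E$, which forces $d_P\partial_P^* + \partial_P^* d_P = -(d_K\partial_P^* + \partial_P^* d_K) - (\text{curvature terms})$; since $\partial_P^*$ anticommutes with $d_K$ and $E$ is flat, one gets $d_P\partial_P^* + \partial_P^*d_P = 0$ on $E$-valued forms, so $d_P\phi$ and $\partial_P^*\phi$ vanish together under the standing assumption that the other already vanishes, and in any case $\partial_P^*\phi = 0 \Rightarrow \partial_P^* d_P \phi = -d_P\partial_P^*\phi = 0$. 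Thus $\Phi \circ d^V \circ \partial^* = 0$ is equivalent to $\partial_P^* d_P\phi = 0$ once $\partial_P^*\phi = 0$ is known.

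Assembling these equivalences: $\Phi$ factors to the BGG-complex $\iff$ ($\Phi \circ \partial^* = 0$ and $\Phi \circ d^V \circ \partial^* = 0$) $\iff$ ($\partial_P^*\phi = 0$ and $\partial_P^* d_P\phi = 0$), which is the claim. The step I expect to require the most care is the bookkeeping in the previous paragraph — keeping track of signs and of the distinction between $d_P\partial_P^*\phi$ and $\partial_P^*d_P\phi$, and making sure the anticommutation $d_P\partial_P^* + \partial_P^*d_P = 0$ really follows from flatness of the tractor connection on $E$ together with the anticommutation relations recorded at the end of Theorem~\ref{thm_Poisson_transform_differential_operators}. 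Everything else is a direct application of results already established: the translation dictionary of Theorem~\ref{thm_harmonic_bgg}, the kernel formulas of Theorem~\ref{thm_Poisson_transform_differential_operators}(i), and the faithfulness of the kernel-to-operator correspondence.
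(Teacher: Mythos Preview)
Your overall strategy is exactly the paper's: translate the two conditions $\Phi\circ\partial^*=0$ and $\Phi\circ d^V\circ\partial^*=0$ into conditions on the kernel via Theorem~\ref{thm_Poisson_transform_differential_operators}(i) and the bijection of Theorem~\ref{thm_bijective_correspondence_transforms_kernel}. The paper's proof is the single sentence ``Follows immediately from Theorem~\ref{thm_Poisson_transform_differential_operators}'', and that is indeed all there is to it.

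However, your execution of the second translation is wrong, and the detour you take to repair it is based on a false identity. Applying Theorem~\ref{thm_Poisson_transform_differential_operators}(i) iteratively, you must peel off the operators starting from the one closest to $\Phi$: first pass from $\Phi$ to $\Phi\circ d^V$ (kernel $\pm d_P\phi$), and \emph{then} from $\Phi\circ d^V$ to $(\Phi\circ d^V)\circ\partial^*$ (kernel $\pm\partial_P^*(d_P\phi)=\pm\partial_P^*d_P\phi$). So the kernel of $\Phi\circ d^V\circ\partial^*$ is a nonzero multiple of $\partial_P^*d_P\phi$ \emph{directly}, not of $d_P\partial_P^*\phi$. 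No anticommutation argument is needed.

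Your attempted fix, claiming $d_P\partial_P^*+\partial_P^*d_P=0$, is false in general: this combination is precisely the kernel-level incarnation of the box operator $\Box^R=d^V\partial^*+\partial^*d^V$, which is nonzero (indeed invertible on $\im\partial^*$). The argument you sketch conflates $(d^E)^2=0$, which relates $d_K$ and $d_P$ to each other, with a relation involving $\partial_P^*$; the last sentence of Theorem~\ref{thm_Poisson_transform_differential_operators} only gives anticommutation of $\partial_P^*$ with $d_K$, $\delta_K$, $\Delta_K$, never with $d_P$. If your claim were true, the second condition $\partial_P^*d_P\phi=0$ in the Corollary would be redundant once $\partial_P^*\phi=0$ holds, which it is not. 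Once you correct the order of the kernel operations, the entire second paragraph becomes unnecessary and the proof reduces to two direct applications of Theorem~\ref{thm_Poisson_transform_differential_operators}(i).
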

\begin{proof}
 Follows immediately from Theorem \ref{thm_Poisson_transform_differential_operators}.
\end{proof}

\begin{rem}
If we assume in addition that $V$ and $W$ have the same underlying $G$-representation, then by Theorem \ref{thm_harmonic_bgg} we know that the images of Poisson transforms which factor to the BGG-complex are automatically harmonic. Thus, Corollary \ref{cor_bgg_poisson_kernel} provides an efficient way to design Poisson transforms with harmonic images via computations of kernels of linear maps between finite dimensional vector spaces. 
\end{rem}

\section{Poisson transforms for real hyperbolic space}
Let $G = \SO(n+1,1)_0$ be the identity component of the special orthogonal group of signature $(n+1,1)$ with maximal compact subgroup $K$ and minimal parabolic subgroup $P$. Then $G/K$ is the real hyperbolic space of dimension $n+1$, whereas $G/P$ is the conformal $n$-sphere. Let $\mv = \mr^{n+1,1}$ be the standard representation of $G$ and denote the corresponding tractor bundles over $G/K$ and $G/P$ by $V_K$ and $V_P$, respectively, which are called the \emph{standard tractor bundles}. For the rest of this article we explicitly construct Poisson transforms
\begin{align*}
 \Phi \colon \Omega^k(G/P, V_P) \to \Omega^\ell(G/K,V_K)
\end{align*}
which factor to the BGG-complex, and determine their properties regarding differential operators on $G/K$ and $G/P$. 

\subsection{Geometric preliminaries}\label{sec_geometric_preliminaries}
Consider the bilinear form $b$ on $\mv = \mr^{n+2}$ of signature $(n+1,1)$ corresponding to the symmetric matrix 
\begin{align*}
 S = \begin{pmatrix} 0 & 0 & 1 \\ 0 & \id_n & 0 \\ 1 & 0 & 0 \end{pmatrix}
\end{align*}
and realize $\SO(n+1,1)$ as the set of all $g \in GL(n+2)$ which satisfy $g^tSg = S$ and $\det(g) = 1$. Writing elements of $G$ as block matrices with the same block sizes as the matrix $S$ the parabolic subgroup $P$ is given by
\begin{align*}
 P = \left\lbrace \begin{pmatrix} a & -aY^tB & -\frac{a}{2}|Y|^2 \\ 0 & B & Y\\ 0 & 0 & a^{-1} \end{pmatrix} \middle| a \in \mr^*, Y \in \mr^n, B \in \SO(n) \right\rbrace,
\end{align*}
whereas the maximal compact subgroup is the stabilizier of the global Cartan involution $g \mapsto (g^T)^{-1}.$ In particular, the subgroup $M$ consists of all matrices in $P$ with $a = 1$ and $Y = 0$. Furthermore, we define the group $A$ as the set of all matrices in $P$ with $B = \id$ and $Y = 0$ as well as the group $N$ as all matrices in $P$ with $B = \id$ and $a = 1$. In this way, the Langlands decomposition of $P$ is given by $P = MAN$. 

Deriving the defining equation for the matrix representation of $G$ the Lie algebra $\xg = \mathfrak{so}(n+1,1)$ is given by
\begin{align*}
 \xg = \left\lbrace \begin{pmatrix} a & -Y^t & 0 \\ X & B & Y \\ 0 & -X^t & -a \end{pmatrix} \middle| a \in \mr, X, Y \in \mr^n, B \in \mathfrak{so}(n)\right\rbrace.
\end{align*}
The block form of elements in $\xg$ induces a $|1|$-grading $\xg_{-1} \oplus \xg_0 \oplus \xg_1$ of $\xg$. Moreover, the Lie algebra $\xm$ of $M$ consists of those matrices with $B \in \mathfrak{so}(n)$ being its only nontrivial entry and thus is contained in $\xg_0$. In particular, the $|1|$-grading of $\xg$ is invariant under the natural $\xm$-action and thus induces a decomposition of $\xg/\xm$ into the direct sum 
\begin{align*}
 \xg/\xm = (\xg/\xm)_{-1} \oplus (\xg/\xm)_{0} \oplus (\xg/\xm)_{-1}.
\end{align*}
We identify each element $\xi \in \xg/\xm$ with its representative in $\xg$ whose $\mathfrak{so}(n)$-part is trivial, and we write $\xi = (X, a, Y)$ for the corresponding matrix. In this picture, the action of $m \in M$ corresponding to $B \in \SO(n)$ is given by $m \cdot \xi = (BX, a, BY)$. Furthermore, the subspace $\xp/\xm$ is generated by $E := (0,1,0)$ and the elements $F_X := (0, 0, X)$ for $X \in \mr^n$, whereas $\xk/\xm$ is given by all elements of the form $G_Y := (Y, 0, Y)$ for $Y \in \mr^n$.

\begin{lemma}\label{lem_E_invariant}
 The element $E^* \in (\xg/\xm)^*$ dual to $E$ is $M$-invariant and of bidegree $(1,0)$. Furthermore, its $K$-derivative $d_KE^*$ is trivial, whereas its $P$-derivative $d_PE^*$ is nondegenerate. Explicitly, if $\langle \ , \ \rangle$ denotes the standard inner product on $\mr^n$, we have $d_PE^*(F_X, G_Y) = \langle X, Y \rangle $ for all $X$, $Y \in \mr^n$. 
\end{lemma}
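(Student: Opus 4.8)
The plan is to translate the statement into linear algebra on $\xg/\xm$, using the equivalence from Section~\ref{sec_invariant_connections_cov_ext} between $G$-invariant forms on $G/M$ and $M$-invariant elements of $\Lambda^*(\xg/\xm)^*$, together with the Chevalley--Eilenberg formula for the exterior derivative of an invariant one-form. As a first step I would settle the $M$-invariance and the bidegree. Decomposing a general element of $\xg/\xm$ as $(X,a,Y)=aE+F_{Y-X}+G_X$ identifies $E^*$ with the linear functional $(X,a,Y)\mapsto a$. Since $M$ acts on $\xg/\xm$ by $B\cdot(X,a,Y)=(BX,a,BY)$, this functional is fixed, so $E^*$ is $M$-invariant (alternatively, $E\in\xg_0$ is the grading element and hence $\Ad(M)$-fixed). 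Moreover the functional $(X,a,Y)\mapsto a$ vanishes on $\xk/\xm=\{G_Y:Y\in\mr^n\}=\ker(T\pi_K)$, so under the decomposition of $T^*(G/M)$ used in Section~\ref{sec_poisson_differential_operators} it lies in the pullback $\pi_K^*T^*(G/K)$, i.e.\ $E^*$ has bidegree $(1,0)$.

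The core of the argument is the computation of $dE^*$. Let $\widehat E\colon\xg\to\mr$ be the extension of $E^*$ along the projection $\xg\to\xg/\xm$; concretely $\widehat E$ reads off the top-left matrix entry, and it vanishes on $\xm$. The pullback of $E^*$ to $G$ along $G\to G/M$ is the left-invariant one-form associated with $\widehat E\in\xg^*$, whose exterior derivative is the left-invariant two-form $(X,Y)\mapsto-\widehat E([X,Y])$; since $E^*$ is basic this descends and gives $dE^*(\bar X,\bar Y)=-\widehat E([X,Y])$ for all $X,Y\in\xg$, independently of the chosen lifts. A direct block-matrix computation then shows that the top-left entry of $[X,Y]$ equals $\langle w_Y,v_X\rangle-\langle w_X,v_Y\rangle$, where for $Z\in\xg$ I write $v_Z$ and $w_Z$ in $\mr^n$ for the $\xg_{-1}$- and $\xg_1$-components of $Z$. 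Hence $dE^*(\bar X,\bar Y)=\langle w_X,v_Y\rangle-\langle w_Y,v_X\rangle$.

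It remains to evaluate this formula on the spanning elements $E$ (with $v=w=0$), $F_X$ (with $v=0$, $w=X$) and $G_Y$ (with $v=w=Y$). This yields $dE^*(E,\,\cdot\,)=0$, $dE^*(F_X,F_{X'})=0$, $dE^*(G_Y,G_{Y'})=0$, and $dE^*(F_X,G_Y)=\langle X,Y\rangle$. The first three identities say that $dE^*$ restricts trivially to $\Lambda^2(\xp/\xm)$ and to $\Lambda^2(\xk/\xm)$; since the bidegree $(2,0)$-component of a two-form is determined by its restriction to $\Lambda^2(\xp/\xm)$ (via the isomorphism $T\pi_K\colon\xp/\xm\to\xg/\xk$) and the $(0,2)$-component by its restriction to $\Lambda^2(\xk/\xm)$, both of these components of $dE^*$ vanish, so $d_KE^*=0$ and $dE^*=d_PE^*$. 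The last identity then reads $d_PE^*(F_X,G_Y)=\langle X,Y\rangle$, and since the standard inner product on $\mr^n$ is nondegenerate, this exhibits $d_PE^*$ as a nondegenerate pairing of the subspace $\{F_X:X\in\mr^n\}\subset\xp/\xm$ with $\xk/\xm$, which is the asserted nondegeneracy.

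I expect no conceptual obstacle here: the passage from $dE^*$ to the bracket expression $-\widehat E\circ[\,\cdot\,,\cdot\,]$ is standard, and the matrix bracket is a short $3\times3$ computation. The only points requiring care are bookkeeping: fixing the bidegree convention so that it is $(\xp/\xm)^*$, rather than $(\xk/\xm)^*$, that gives bidegree $(1,0)$; keeping track of the Chevalley--Eilenberg sign; and verifying that the extra-term $\widehat E([Z,Y])$ for $Z\in\xm$ vanishes, which guarantees that the formula for $dE^*$ is well defined on $\xg/\xm$.
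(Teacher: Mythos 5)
Your proof is correct and complete. The paper itself does not argue this lemma at all — it simply cites \cite[Proposition 4]{harrach16} — so your direct verification via the Chevalley--Eilenberg formula $dE^*(\bar X,\bar Y)=-\widehat E([X,Y])$ is exactly the self-contained argument one would want; the block-matrix bracket computation, the sign (which reproduces $d_PE^*(F_X,G_Y)=+\langle X,Y\rangle$ with the paper's convention in equation (\ref{eqn_derivative})), the well-definedness check on $\xm$, and the identification of the bidegree components via vanishing on $\Lambda^2(\xp/\xm)$ and $\Lambda^2(\xk/\xm)$ all check out.
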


\begin{proof}
 Follows from \cite[Proposition 4]{harrach16}.
\end{proof}

\begin{rem}
In \cite{harrach16} the invariant form $E^*$ was used to construct for a large class of Lie groups a family of Poisson transforms $\Phi_k^{\mr} \colon \Omega^k(G/P, L) \to \Omega^k(G/K)$ for all $0 \le k \le n$ and any line bundle $L$ over $G/P$. In the case $G = \SO(n+1,1)_0$ it can be shown (c.f. \cite[section 3.2.]{harrach17}) that the images of these transforms are coclosed and eigenforms for the Laplace operator on $G/K$ and that they are the unique transforms up to real multiples which preserve the degree. Furthermore, if $L$ is the trivial bundle they satisfy the relation
\begin{align*}
 d \circ \Phi_k^{\mr} = (-1)^k (n-2k) \Phi_{k+1}^{\mr} \circ d
\end{align*}
for all $0 \le k \le n-1$.
\end{rem}


Next, a direct computation shows that the standard representation $\mv$ splits as an $M$-module into the direct sum $\mv = \mv_{-1} \oplus \mv_{0} \oplus \mv_{-1}$, where $\mv_{\pm 1} \cong \mr$ are copies of the trivial representation and $\mv_0 \cong \mr^n$ corresponds to the standard representation of $M \cong \SO(n)$. This decomposition is compatible with the $|1|$-grading of $\xg$ in the sense that $\xg_i \cdot \mv_j \subset \mv_{i+j}$ for all $i, j \in \{-1,0,1\}$, where we agree that $\mv_k = \{0\}$ for $|k| > 1$. Note that the above splitting of $\mv$ coincides with the eigendecomposition of the action of the grading element of $\xg$, and the indices are precisely the eigenvalues. 

As a last step we will identify the structure of the standard tractor bundles over $G/P$ and $G/K$. In order to do so, recall that for all $\lambda \in \mr$ we can define a $1$-dimensional $P$-representation $\mr[\lambda]$ via $p \cdot t = a^{-\lambda}t$ for all $p = man \in P$ and $t \in \mr$. Its associated line bundle $\ce[\lambda]$ is called the \emph{bundle of $\lambda$-densities}, and for any vector bundle $W \to G/P$ we define $W[\lambda] := W \otimes \ce[\lambda]$.

\begin{lemma}\label{lem_structure_standard_tractor_bundle}
 Let $G = \SO(n+1,1)_0$, $K$ its maximal compact subgroup, $P$ its parabolic subgroup and $\mv = \mr^{n+1,1}$ the standard representation of $G$. 
 \begin{enumerate}[(i)]
  \item The standard tractor bundle $V_K = G \times_K \mv$ over $G/K$ decomposes into
  the Whitney sum of $T(G/K)$ and a trivial line bundle.
  \item The standard tractor bundle $V_P = G \times_P \mv$ over $G/P$ carries a $G$-invariant filtration $V_P = V^{-1} \supset V^0 \supset V^1$ into homogeneous bundles whose associated graded vector bundle is given by
\begin{align*}
 \gr(V_P) = \ce[1] \oplus T(G/P)[1] \oplus \ce[-1].
\end{align*}
In particular, there is a canonical inclusion $\ce[-1] \hookrightarrow V_P$.
 \end{enumerate}
\end{lemma}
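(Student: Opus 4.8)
The plan is to read off both decompositions directly from the explicit matrix model of $G$, $K$, $P$ and the standard representation $\mv=\mr^{n+2}$ introduced in Section \ref{sec_geometric_preliminaries}, using the equivalence of categories between homogeneous bundles and representations of the isotropy group.

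For part (i) I would argue as follows. The bundle $V_K=G\times_K\mv$ is determined by $\mv$ as a $K$-module, so it suffices to decompose $\mv$ under $K$. Since $K$ is the stabiliser of the Cartan involution $g\mapsto(g^T)^{-1}$, which in the bilinear-form picture means $K$ preserves both $b$ and the standard positive definite form, the $K$-action on $\mv$ preserves a positive definite inner product; moreover $\mv$ carries the $K$-invariant vector $v_0$ fixed by $K$ (concretely the vector on which $b$ restricts to a negative line, which is the $+1$-eigenline of the Cartan involution acting on $\mr^{n+2}$). Its orthogonal complement is an $n+1$-dimensional $K$-submodule, which one identifies with $\xq\cong\xg/\xk$: indeed the Cartan decomposition gives $\xg/\xk\cong\xq$ as $K$-modules and $\xq$ sits inside $\mv$ via the standard action, so that $\mv\cong\xq\oplus\mr$ as $K$-modules. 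Passing to associated bundles and using $T(G/K)=G\times_K\xq$ yields $V_K\cong T(G/K)\oplus(G/K\times\mr)$, which is the claimed Whitney sum.

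For part (ii) I would use the grading element $E\in\xg_0$ and the $\xm$-equivariant eigendecomposition $\mv=\mv_{-1}\oplus\mv_0\oplus\mv_1$ from the paragraph preceding the Lemma, where the subscripts are the $E$-eigenvalues. This eigendecomposition is only $M$-invariant, not $P$-invariant, but it refines to a $P$-invariant filtration $V^1\subset V^0\subset V^{-1}=\mv$ by $V^1=\mv_1$ and $V^0=\mv_1\oplus\mv_0$, because the grading $\xg_i\cdot\mv_j\subset\mv_{i+j}$ together with $\xp=\xg_0\oplus\xg_1$ forces $\xp$ to preserve this flag (and $P$ is connected, or one checks invariance under the block-upper-triangular matrices directly). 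Passing to associated bundles gives a $G$-invariant filtration $V_P=V^{-1}\supset V^0\supset V^1$ whose graded pieces are $G\times_P\mv_j$ for $j=1,0,-1$. It then remains to identify these three $P$-modules: $\mv_{\pm1}$ are one-dimensional with the $M$-action trivial and the $A$-action by $a^{\mp1}$ (read off from the action of the grading element together with the explicit form of $P=MAN$), hence $\mv_1\cong\mr[1]$ and $\mv_{-1}\cong\mr[-1]$, giving the line bundles $\ce[1]$ and $\ce[-1]$; and $\mv_0\cong\mr^n$ is the standard $M=\SO(n)$-module tensored with the $A$-character determined by its $E$-eigenvalue $0$ and its behaviour under the grading element, which one matches with the weight of $T(G/P)[1]$. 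Since $\xg_{-1}\cong(\xg/\xp)$ carries the standard $M$-representation with the appropriate density twist, the identification $G\times_P\mv_0\cong T(G/P)[1]$ follows from the standard description of the tangent bundle of a parabolic geometry. Finally the inclusion $\ce[-1]\hookrightarrow V_P$ is just the inclusion of the smallest filtration step $V^1=G\times_P\mv_1$... wait: one must be careful which extreme piece is a subbundle. The subbundle is $V^1=G\times_P\mv_1$, but $\mv_1$ carries the $E$-eigenvalue $+1$; to match the stated $\ce[-1]\hookrightarrow V_P$ I would track the density convention $p\cdot t=a^{-\lambda}t$ carefully and identify $\mv_1$ (the $b$-isotropic line annihilated by $\xg_{-1}$) with $\mr[-1]$ under that convention, so that the smallest filtration step is exactly $\ce[-1]$.

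The main obstacle will be purely bookkeeping: getting the density weights right, i.e. reconciling the $E$-eigenvalues $\pm1$ of $\mv_{\pm1}$ with the sign convention $p\cdot t=a^{-\lambda}t$ defining $\ce[\lambda]$, so that the lowest filtration step comes out as $\ce[-1]$ and the graded middle piece as $T(G/P)[1]$ rather than some other twist. I expect no conceptual difficulty beyond this — every module in sight is either a character of $A$, the trivial or standard module of $M$, and the $N$-action is unipotent and hence invisible on the graded level — so the argument is a direct computation in the explicit model of Section \ref{sec_geometric_preliminaries}.
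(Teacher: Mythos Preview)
Your proposal is correct and follows essentially the same strategy as the paper: reduce to the underlying $K$- and $P$-modules and read off the decomposition/filtration from the explicit matrix model. The only cosmetic difference is that for part (ii) the paper phrases the $P$-invariant filtration geometrically --- $P$ stabilises a null line $\mv^1$ and its $b$-orthogonal complement $\mv^0=(\mv^1)^\perp$ --- whereas you obtain the same flag via the grading-element eigenspaces and the compatibility $\xg_i\cdot\mv_j\subset\mv_{i+j}$; these are the same filtration, and your bookkeeping of the density weights (resolving $\mv_1\cong\mr[-1]$ under the convention $p\cdot t=a^{-\lambda}t$) is exactly what the paper records without further comment.
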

\begin{proof}

 \begin{enumerate}[(i)]
  \item The the maximal compact subgroup $K$ stabilizes a negative line as well as its orthogonal complement $\mv_T$, the latter being isomorphic to $\xg/\xk$ as a $K$-module. By the naturality of the associated bundle construction the claim follows.

\item The parabolic subgroup $P$ stabilizes a null line $\mv^1$ as well as its orthogonal complement $\mv^0 := (\mv^1)^{\perp}$. Thus, we obtain a $P$-invariant filtration $\mv = \mv^{-1} \supset \mv^0 \supset \mv^1$ of $\mv$, whose associated graded satisfies $\mv^{-1}/\mv^0 \cong \mr[1]$, $\mv^0/\mv^1 \cong (\xg/\xp)[1]$ and $\mv^1 \cong \mr[-1]$. Now the claim follows by the associated bundle construction. \qedhere
 \end{enumerate}
\end{proof}

\subsection{Construction of Poisson transforms }\label{sec_construction_tractor_valued_poisson}
We continue our discussion by explicitly constructing a family of Poisson kernels $\phi^{\mv} \in \Lambda^{\ell,n-k} (\xg/\xm)^* \otimes \End(\mv)$ whose associated Poisson transforms factor to the BGG-complex. In order to do so, note that the natural inclusion $\ce[-1] \hookrightarrow V_P$ from Lemma \ref{lem_structure_standard_tractor_bundle}(ii) induces an identification of $\Omega^k(G/P)$ with a subset of $\Omega^k(G/P, V_P[1])$. Furthermore, the splitting of $V_K$ in part (i) the same Lemma induces a decomposition 
\begin{align*}
 \Omega^\ell(G/K, V_K) = \Omega^\ell(G/K, T(G/K)) \oplus \Omega^\ell(G/K)
\end{align*}
for all $0 \le \ell \le n+1$ by tensoriality. Regarding the case of Poisson transforms between scalar valued differential forms we therefore focus on the construction of operators $\Phi_k^{\mv}$ between $V_P$-valued $k$-forms on $G/P$ and $T(G/K)$-valued $k$-forms on $G/K$ which factor to the BGG-complex and have coclosed images. If $\mv_T$ denotes the $K$-invariant subspace in $\mv$ corresponding to the tangent bundle as before, these transforms correspond by Theorem \ref{thm_Poisson_transform_differential_operators} to Poisson kernels $\phi_k^\mv \in \Lambda^{k,n-k}(\xg/\xm)^* \otimes L(\mv, \mv_T)$ which satisfy 
\begin{align*}
 \partial_P^*\phi_k^{\mv} &= 0, & \partial_P^*d_P\phi_k^{\mv} &= 0, & \delta_K\phi_k^{\mv} &= 0.
\end{align*}
In order to construct such a Poisson kernel we need to determine an explicit formula for the $P$-codifferential.
\begin{rem}
 For $k = n$ the first condition and for $k = 0$ the last two conditions on the Poisson kernel are trivially satisfied, implying that these two cases have to be dealt with seperately. Therefore, we will focus on the construction of $\phi_k^{\mv}$ for $1 \le k \le n-1$ in the sequel.
\end{rem}

\begin{prop}\label{prop_relations_P_codifferential}
 Let $\rho^{\mv^*}$ denote the tensor product of the induced $\xg$-action on $\mv^*$ with the identity on $\Lambda^*(\xg/\xm)^* \otimes \mv$. For $X \in \mr^n$ let $\xi_{X} \in \xg_1$ and $G_{X} \in \xk/\xm$ denote the corresponding vectors. Then the image of a Poisson kernel $\phi \in \Lambda^*(\xg/\xm)^* \otimes \End(\mv)$ under the $P$-codifferential is given by
  \begin{align}\label{eqn_P_codifferential}
 \partial_P^*\phi = -\frac{1}{2n} \sum_{j =1}^n \iota_{G_{e_j}} \rho^{\mv^*}_{\xi_{e_{j}}}\phi,
\end{align}
where $\{e_1, \dotsc, e_n\}$ be an orthonormal basis of $\mr^n$ with respect to the standard inner product on $\mr^n$. Moreover,
\begin{enumerate}[(i)]
 \item If $E \in \xg/\xm$ denotes the $M$-invariant vector induced by the grading element, then for all $\phi \in \Lambda^*(\xg/\xm)^* \otimes \End(\mv)$ we have
  \begin{align*}
 \partial_P^*\iota_E\phi &= - \iota_E\partial_P^* \phi, & \partial_P^*(E^* \wedge\phi) = - E^* \wedge \partial_P^*\phi.
\end{align*}
\item Let $\mw$ be any subspace of $\mv$. Then the image of $\phi \in \Lambda^*(\xg/\xm)^* \otimes L(\mv_j,\mw)$ under the $P$-codifferential has values in the subspace $L(\mv_{j-1}, \mw)$.
\item The subspace $\Lambda^{k, \ell}(\xg/\xm)^* \otimes L(\mv_{-1}, \mv)$ is contained in the image of the $P$-codifferential for all $0 \le \ell \le n-1$.
\end{enumerate}
\end{prop}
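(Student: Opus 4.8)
The plan is to first establish the closed formula \eqref{eqn_P_codifferential} by specialising the expression for the Kostant codifferential obtained in the proof of Proposition~\ref{prop_Kostant_codifferential_self_adjoint} to the $|1|$-graded group $G$, and then to read off (i)--(iii) as formal consequences. \emph{First I would treat the formula on $G/P$.} Since $\xg$ carries a $|1|$-grading we have $[\xp_+,\xp_+]=[\xg_1,\xg_1]\subseteq\xg_2=\{0\}$, so in the formula for $\partial^*$ from the proof of Proposition~\ref{prop_Kostant_codifferential_self_adjoint} the first (bracket) summand drops out: each commutator $[\xi_s,\eta_t]$ lies in $\xg_0\subseteq\xp$ and therefore annihilates any chain. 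Hence, for dual bases $\{\xi_s\}$ of $\xg_1$ and $\{\eta_s\}$ of $\xg_{-1}$ with respect to the Killing form, $\partial^*\phi=-\sum_s\rho^{\mv^*}_{\xi_s}\iota_{\eta_s}\phi$ on $\mv^*$-valued forms over $G/P$. I would then compute the Killing form in the matrix model of section~\ref{sec_geometric_preliminaries}: with $B=n\,\tr$ on $\mathfrak{so}(n+2)$ one gets $B(\xi_X,\eta_Z)=-2n\langle X,Z\rangle$, so $\{\xi_{e_j}\}$ has dual basis $\{-\tfrac1{2n}\eta_{e_j}\}$, which pins down the normalisation constant to $-\tfrac1{2n}$ and gives $\partial^*\phi=-\tfrac1{2n}\sum_j\rho^{\mv^*}_{\xi_{e_j}}\iota_{\eta_{e_j}}\phi$ over $G/P$.

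\emph{Next I would lift this to $G/M$.} By construction, $\partial_P^*$ restricted to $\Omega^{0,\ast}(G/M,V_M^*)$ is the pullback of $\partial^*$ along $\pi_P$. Since $G_{e_j}=\eta_{e_j}+F_{e_j}$ with $F_{e_j}\in\xp/\xm=\ker T\pi_P$, a form pulled back from $G/P$ is annihilated by $\iota_{F_{e_j}}$, so $\iota_{G_{e_j}}$ and $\iota_{\eta_{e_j}}$ agree on it; this yields \eqref{eqn_P_codifferential} for forms of bidegree $(0,\ast)$. For general bidegree I would write a $(p,q)$-form as a sum of products $\alpha\wedge\beta$ with $\alpha$ of bidegree $(p,0)$ and $\beta$ of bidegree $(0,q)$; since $\alpha$ is pulled back from $G/K$ it is killed by $\iota_{G_{e_j}}$, and $\rho^{\mv^*}_{\xi_{e_j}}$ acts only on the $\mv^*$-factor of $\beta$, so the right-hand side of \eqref{eqn_P_codifferential} applied to $\alpha\wedge\beta$ equals $(-1)^p\alpha\wedge(\partial_P^*\beta)$ --- exactly the defining relation of the $P$-codifferential. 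This establishes \eqref{eqn_P_codifferential}.

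\emph{Statements (i)--(iii) then follow from \eqref{eqn_P_codifferential}.} For (i): $\iota_E$ and $E^*\wedge(\cdot)$ both commute with every $\rho^{\mv^*}_{\xi_{e_j}}$ (these act on coefficients only), $\iota_E$ anticommutes with every $\iota_{G_{e_j}}$, and $E^*\wedge(\cdot)$ anticommutes with every $\iota_{G_{e_j}}$ because $E^*(G_{e_j})=0$ (the vector $G_{e_j}$ has trivial $(\xg/\xm)_0$-part); substituting into \eqref{eqn_P_codifferential} gives both identities. For (ii): since $(\rho^{\mv^*}_Z T)(v)=-T(Z\cdot v)$ and $\xi_{e_j}\cdot\mv_{j-1}\subseteq\mv_j$, the operator $\rho^{\mv^*}_{\xi_{e_j}}$ sends $L(\mv_j,\mw)$-valued forms to $L(\mv_{j-1},\mw)$-valued ones, and $\iota_{G_{e_j}}$ leaves the coefficient type unchanged. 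For (iii): because $\partial_P^*$ is $M$-equivariant, preserves the $K$-degree, and acts trivially on the $\mv$-factor of $\End(\mv)=\mv\otimes\mv^*$, it suffices to show that the relevant block $\partial^*\colon C_{\ell+1}(\xp_+,(\mv_0)^*)\to C_\ell(\xp_+,(\mv_{-1})^*)$ of the Kostant codifferential is onto for $0\le\ell\le n-1$; here $(\mv_{-1})^*$ is a trivial one-dimensional $\xg_1$-module and, in the basis of section~\ref{sec_geometric_preliminaries}, $\xi_{e_j}\cdot\colon(\mv_0)^*\to(\mv_{-1})^*$ is (up to sign) the $j$-th coordinate functional. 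Hence for a basis chain $\xi_{e_{a_1}}\wedge\dots\wedge\xi_{e_{a_\ell}}$ I would pick an index $c\notin\{a_1,\dots,a_\ell\}$ --- available precisely because $\ell\le n-1$ --- and check that $\partial^*(\xi_{e_c}\wedge\xi_{e_{a_1}}\wedge\dots\wedge\xi_{e_{a_\ell}}\otimes e_c)=\pm\,\xi_{e_{a_1}}\wedge\dots\wedge\xi_{e_{a_\ell}}$; since these chains span the target, surjectivity follows.

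The step I expect to be the main obstacle is the passage in the first two paragraphs: correctly tracking the bidegree conventions and signs when transporting the Kostant codifferential (and the identification $\xp_+\cong(\xg/\xp)^*$) from $G/P$ to $G/M$, and determining the constant $-\tfrac1{2n}$ from the Killing form. Once \eqref{eqn_P_codifferential} is in hand, (i)--(iii) are short, with the only genuine input being the availability of an unused index in (iii) --- which is exactly why the range is restricted to $1\le k\le n-1$ (equivalently $\ell\le n-1$).
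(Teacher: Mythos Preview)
Your derivation of the main formula \eqref{eqn_P_codifferential} and of parts (i) and (ii) follows the paper's proof essentially verbatim: specialise the expression from Proposition~\ref{prop_Kostant_codifferential_self_adjoint} to the $|1|$-graded case (so the bracket term drops), fix the constant via the Killing form, and then pass to $G/M$ on decomposable forms using that $(p,0)$-forms insert trivially on $\xk/\xm$. Parts (i) and (ii) are then read off from \eqref{eqn_P_codifferential} exactly as the paper does.

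For part (iii) you take a genuinely different route. The paper writes down an explicit $M$-equivariant preimage
\[
\tilde{\phi}=\frac{2n}{n-1}\sum_{i=1}^n(\iota_{F_{e_i}}d_PE^*)\wedge\rho^{\mv^*}_{\eta_{e_i}}\phi
\]
and checks by direct computation that $\partial_P^*\tilde{\phi}=\phi$. You instead reduce to surjectivity of the block $\partial^*\colon C_{\ell+1}(\xp_+,(\mv_0)^*)\to C_\ell(\xp_+,(\mv_{-1})^*)$ and verify it on basis chains by choosing an unused index $c$; this works because $\xi_{e_c}\cdot e_c^*=-v_{-1}^*$ while $\xi_{e_{a_i}}\cdot e_c^*=0$ for $a_i\neq c$, so only the leading term survives. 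Your argument is more elementary and avoids verifying a formula, and it does not require $n\ge 2$ (the paper's preimage has a factor $\tfrac{1}{n-1}$). The paper's approach, on the other hand, produces an explicit $M$-equivariant right inverse, which is more in the spirit of the constructions used later in section~\ref{sec_construction_tractor_valued_poisson}. Both are valid; the reduction you make (peeling off the $\Lambda^k(\xp/\xm)^*\otimes\mv$ factor on which $\partial_P^*$ acts trivially) is implicit in the paper as well, which also restricts to bidegree $(0,\ell)$ before constructing $\tilde\phi$.
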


\begin{proof} 
Since the Lie algebra $\xg$ is $|1|$-graded, the formula for the Kostant codifferential form the proof of Proposition \ref{prop_Kostant_codifferential_self_adjoint} reduces to the alternation of the $\xp_+$-action on $\mv$. Explicitly, if we denote by $\eta_X \in \xg_{-1}$ the element corresponding to $X \in \mr^n$ we obtain that
\begin{align*}
\partial^*\psi = - \frac{1}{2n} \sum_{s =1}^n \rho_{\xi_{e_j}}^{\mv} \iota_{\eta_{e_j}}\psi
\end{align*}
for all $\psi \in \Lambda^{\ell} \xp_+ \otimes \mv$, where we used the explicit expression of the Killing form on $\xg$. Interpreting this formula in terms of the $M$-module $\xg/\xm$, the subspace $\xg_{-1} \cong \xg/\xp$ corresponds to $\xk/\xm$, which is generated by vectors $G_{e_j}$ for $j = 1, \dotsc, n$. Thus, on decomposable elements $\phi = \alpha \wedge \beta$ with $\alpha \in \Lambda^{\ell,0} (\xg/\xm)^* \otimes \mv$ and $\beta \in \Lambda^{0,n-k} (\xg/\xm)^* \otimes \mv^*$ we deduce that
\begin{align*}
\partial_P^* \phi = (-1)^{\ell+1} \frac{1}{2n} \sum_{j=1}^n \alpha \wedge \rho_{\xi_{e_j}} \iota_{G_{e_j}}\beta = -\frac{1}{2n} \sum_{j=1}^n \iota_{G_{e_j}}\rho^{\mv^*}_{\xi_{e_j}}\phi,
\end{align*}
where we used that $\alpha$ is of bidegree $(\ell,0)$ and thus trivial upon insertion of the vectors $G_{e_j}$ for all $j = 1, \dotsc, n$. Since $\partial_P^*$ is linear this formula holds for all elements in $\Lambda^*(\xg/\xm)^* \otimes \End(\mv)$.

 \begin{enumerate}[(i)] 
  \item Follows immediately from (\ref{eqn_P_codifferential}) by using the antiderivation property of the interior product.
  
  \item For all $\xi \in \xp_+ = \xg_1$ the values of the $M$-invariant elements $\rho^{\mv^*}_{\xi}\phi$ lie in the space $L(\mv_{j-1}, \mw)$ by definition of the dual action.

\item It suffices to construct a preimage of $\phi \in \Lambda^{0,\ell}(\xg/\xm)^* \otimes L(\mv_{-1}, \mv)$ under $\partial^*_P$, which has to be of bidegree $(0,\ell+1)$ and to have values in $L(\mv_0, \mv)$ by part (ii). Explicitly, if we denote by $F_X \in \xp/\xm$ and $\eta_X \in \xg_{-1}$ the elements corresponding to $X \in \mr^n$, then a direct computation shows that
\begin{align*}
 \tilde{\phi} := \frac{2n}{n-1} \sum_{i=1}^n (\iota_{F_{e_i}}d_PE^*) \wedge \rho^{\mv^*}_{\eta_{e_i}}\phi
 \end{align*}
satisfies $\partial_P^*\tilde{\phi} = \phi$. \qedhere
 \end{enumerate}
\end{proof}

In order to get an appropriate ansatz for the Poisson kernels $\phi_k^{\mv}$ consider the $M$-equivariant map $S \in \End(\mv)$ which induced the matrix representation of $G$ in section \ref{sec_geometric_preliminaries}. This map satisfies $\theta(X)\cdot S(v) = S(X \cdot v)$ for all $X \in \xg$ and $v \in \mv$, which implies that it is contained in the kernels of the partial derivatives $d_K^\theta$ and $d_P$. Furthermore, for $i \in \{1,-1\}$ let $f_i \colon \mv_i \to \mv_1$ denote the $M$-equivariant maps induced by the identity. Then we define for all $k \in \mr$ the $M$-invariant element $\sigma_k \in \Lambda^{1,0}(\xg/\xm)^* \otimes L(\mv, \mv_T)$ via
\begin{align*}
 \sigma_k := (k+1)E^* \otimes S - (k+1)d_K^\theta f_1 + (n+2)d_K^\theta f_{-1}.
\end{align*}
By construction, these elements are contained in the kernel of $d_K^{\theta}$, satisfy $\sigma_k(\xi)|_{\mv_1} = 0$ and $\sigma_{-1}(\xi)|_{\mv_0} = 0$ for all $\xi \in \xp/\xm$ as well as the relation
\begin{align}\label{eqn_sigma}
 k\sigma_k + \sigma_{-1} = (k+1)\sigma_{k-1}
\end{align}
for all $k \in \mr$. Furthermore, using the formula for the $P$-codifferential from Proposition \ref{prop_relations_P_codifferential} a direct computation shows that $(d_PE^*)^{n-k-1} \wedge d_P\sigma_k$ is contained in the kernel of $\partial_P^*$ for all $1 \le k \le n-1$.

Subsequently, we consider the $M$-invariant element
\begin{align*}
 \phi^{(1)}_k := \ast_K(E^* \wedge (d_PE^*)^{n-k-1} \wedge d_P\sigma_k) \in \Lambda^{k,n-k} (\xg/\xm)^* \otimes L(\mv, \mv_T)
\end{align*}
for all $1 \le k \le n-1$, which by definition of the $K$-codifferential and Lemma \ref{lem_E_invariant} is $K$-coclosed. Furthermore, the $P$-codifferential commutes with the $K$-Hodge star up to a sign as well as with the wedge product with $E^*$ due to Proposition \ref{prop_relations_P_codifferential}(i), implying that $\partial_P^*\phi^{(1)}_k = 0$. However, the $P$-differential of this form is not in the kernel of $\partial^*_P$.  Therefore, we add an additional error term, which in view of relation (\ref{eqn_sigma}) we choose as a multiple of
\begin{align*}
 \phi^{(2)}_k := \ast_K((d_PE^*)^{n-k} \wedge \sigma_{-1}) \in \Lambda^{k,n-k} (\xg/\xm)^* \otimes L(\mv_{-1}, \mv_T).
\end{align*}
This is contained in the kernel of $\partial^*_P$ due to Proposition \ref{prop_relations_P_codifferential}(ii) and also $K$-coclosed by the same arguments as before.

Therefore, for all $1 \le k \le n-1$ the Poisson kernel $\phi_k^{\mv} := k\phi^{(1)}_k + \phi^{(2)}_k$ is contained in the kernels of $\delta_K$ and $\partial_P^*$. Furthermore, using that $d_P\ast_K = (-1)^{n+1}\ast_Kd_P$ from Theorem \ref{thm_Poisson_transform_differential_operators} as well as relation (\ref{eqn_sigma}) its $P$-derivative satisfies
\begin{align}\label{eqn_P_derivative_Poisson_kernel}
 d_P\phi_k^{\mv} = (-1)^{n+1} (k+1)\ast_K((d_PE^*)^{n-k} \wedge d_P\sigma_{k-1}),
\end{align}
which is also contained in the kernel of $\partial_P^*$ by a direct computation. Thus, by Corollary \ref{cor_bgg_poisson_kernel} it follows that the Poisson transforms $\Phi_k^{\mv}$ induced by $\phi_k^{\mv}$ factor to $G$-equivariant operators
\begin{align*}
 \underline{\Phi}_k^{\mv} \colon \Gamma(\ch_k(G/P,V_P)) \to \Omega^k(G/K, T(G/K))
\end{align*}
for all $1 \le k \le n-1$.

\begin{rem}
The definition of $\phi_k^{\mv}$ initially also makes sense for $k = 0$. However, since $\iota_E\sigma_{-1} = 0$ it follows that the resulting Poisson kernel is trivial.
\end{rem}

\begin{thm}
 Let $G = \SO(n+1,1)_0$, $K \subset G$ its maximal compact subgroup and $P \subset G$ its minimal parabolic subgroup. Let $\mv = \mr^{n+1,1}$ be the standard representation of $G$ and $V_P = G \times_P \mv$ its associated vector bundle over $G/P$. For all $1 \le k \le n-1$ let
 \begin{align*}
  \underline{\Phi}_k^{\mv} \colon \Gamma(\ch_k(G/P, V_P)) \to \Omega^k(G/K, T(G/K))
 \end{align*}
the $G$-equivariant map induced by the Poisson kernels $\phi_k^{\mv}$. Then the images of $\underline{\Phi}_k^{\mv}$ are coclosed and harmonic differential forms on $G/K$. 
\end{thm}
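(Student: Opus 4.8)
The plan is to deduce the statement directly from the properties of the Poisson kernels $\phi_k^{\mv}$ established in their construction, combined with Theorems \ref{thm_Poisson_transform_differential_operators} and \ref{thm_harmonic_bgg}. The key inputs, verified above, are that for every $1 \le k \le n-1$ the kernel $\phi_k^{\mv} = k\phi_k^{(1)} + \phi_k^{(2)}$ satisfies
\begin{align*}
 \partial_P^*\phi_k^{\mv} = 0, \qquad \partial_P^* d_P\phi_k^{\mv} = 0, \qquad \delta_K\phi_k^{\mv} = 0,
\end{align*}
the first and third being immediate from the definitions of $\phi_k^{(1)}$ and $\phi_k^{(2)}$ together with Proposition \ref{prop_relations_P_codifferential} and Lemma \ref{lem_E_invariant}, and the second following from the explicit expression \eqref{eqn_P_derivative_Poisson_kernel} for $d_P\phi_k^{\mv}$.

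For the coclosedness I would apply Theorem \ref{thm_Poisson_transform_differential_operators}(ii): the composition $\delta^{V_K}\circ\Phi_k^{\mv}$ is again a Poisson transform, now with Poisson kernel $\delta_K\phi_k^{\mv} = 0$, hence it vanishes identically. Therefore every form in the image of $\Phi_k^{\mv}$ is coclosed, and since the image of the factored operator $\underline{\Phi}_k^{\mv}$ is contained in that of $\Phi_k^{\mv}$, the same holds for $\underline{\Phi}_k^{\mv}$. For the harmonicity I would use that $V_P$ and $V_K$ have the same underlying $G$-representation $\mv$, so Theorem \ref{thm_harmonic_bgg} applies to $\Phi_k^{\mv}$ (which, being given by a $G$-invariant integral kernel, is also $\xg$-equivariant), viewed as a Poisson transform into $\Omega^k(G/K, V_K)$ with $\End(\mv)$-valued kernel $\phi_k^{\mv}$; its image automatically lies in the subbundle $T(G/K) \hookrightarrow V_K$ because $\phi_k^{\mv}$ takes values in $L(\mv, \mv_T)$. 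By Corollary \ref{cor_bgg_poisson_kernel} the vanishing of $\partial_P^*\phi_k^{\mv}$ and of $\partial_P^* d_P\phi_k^{\mv}$ is equivalent to condition (i) of Theorem \ref{thm_harmonic_bgg}, so by the equivalence (i) $\Leftrightarrow$ (iii) there the image of $\Phi_k^{\mv}$ is contained in $\ker(\Delta^{V_K})$, and hence so is the image of $\underline{\Phi}_k^{\mv}$.

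I do not anticipate a real obstacle, since the genuine content is already built into the construction of $\phi_k^{\mv}$; what is left is bookkeeping with Theorems \ref{thm_Poisson_transform_differential_operators} and \ref{thm_harmonic_bgg} and Corollary \ref{cor_bgg_poisson_kernel}. The two points deserving care are, first, that the nontrivial vanishing statements $\partial_P^* d_P\phi_k^{\mv} = 0$ and $\delta_K\phi_k^{\mv} = 0$ are precisely the ``direct computations'' alluded to in the construction, which rely on the formula for $\partial_P^*$ from Proposition \ref{prop_relations_P_codifferential} and on the sign rules $d_P\ast_K = (-1)^{n+1}\ast_K d_P$ and $\partial_P^*\ast_K = (-1)^{n+1}\ast_K\partial_P^*$ of Theorem \ref{thm_Poisson_transform_differential_operators}; and second, that ``coclosed'' and ``harmonic'' are meant relative to the covariant codifferential $\delta^{V_K}$ and the covariant Laplace $\Delta^{V_K}$ on $V_K$-valued forms, so one must run the argument with the $\End(\mv)$-valued kernel $\phi_k^{\mv}$ rather than with a variant whose target bundle has been restricted to $T(G/K)$ from the start.
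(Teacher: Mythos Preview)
Your proposal is correct and follows essentially the same approach as the paper's proof, which simply observes that the Poisson transform factors to the BGG-complex and has coclosed image, so the claim follows from Theorems \ref{thm_harmonic_bgg} and \ref{thm_Poisson_transform_differential_operators}. Your write-up is more detailed, spelling out the roles of the kernel identities $\partial_P^*\phi_k^{\mv}=0$, $\partial_P^*d_P\phi_k^{\mv}=0$, $\delta_K\phi_k^{\mv}=0$ and the need to view $\phi_k^{\mv}$ as $\End(\mv)$-valued so that Theorem \ref{thm_harmonic_bgg} applies, but the logical structure is identical.
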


\begin{proof}
Since the Poisson transform induced by $\phi_k^{\mv}$ factor to the BGG-complex and have coclosed image, the claim follows from Theorems \ref{thm_harmonic_bgg} and Theorem \ref{thm_Poisson_transform_differential_operators}.
\end{proof}

\begin{rem}
 With more effort it can be shown that for $1 \le k \le n-1$ the operator $\Phi_k^{\mv}$ is the unique Poisson transform $\Omega^k(G/P,V_P) \to \Omega^k(G/K,V_K)$ up to multiples which factors to the BGG-complex.
\end{rem}

\subsection{The Poisson transforms on weighted differential forms}
In the next step we extend the domain of the Poisson transforms constructed in the previous section to differential forms on $G/P$ with values in weighted standard tractor bundles $V_P[\lambda]$ for any $\lambda \in \mr$. We show that the image of their induced $G$-equivariant operators on sections of the weighted homology bundles $\ch_k(G/P,V_P)[\lambda]$ consists again of coclosed differential forms which are eigenforms of the covariant Laplace operator on $G/K$.


Explicitly, fix $\lambda \in \mr$ and consider the pullback of $\alpha \in \Omega^k(G/P, V_P[\lambda])$ along the projection $\pi_P \colon G/M \to G/P$. Since $M$ is compact, the pullback of a $\lambda$-density is a smooth function on $G/M$, so $\pi_P^*\alpha$ is a differential form on $G/M$ with values in $G \times_M \mv$. Therefore, the same formula as in the unweighted case defines a $G$-equivariant operator $\Phi_{k,\lambda}^{\mv} \colon \Omega^k(G/P, V_P[\lambda]) \to \Omega^k(G/K, T(G/K))$ induced by the Poisson kernel $\phi_k^{\mv}$, which we again call a Poisson transform. 

Via the proof of Theorem \ref{thm_Poisson_transform_differential_operators} we can compute the image of $\alpha \in \Omega^k(G/P, V_P[\lambda])$ under $\Delta_K \circ \Phi_{k,\lambda}^{\mv}$ as the fibre integral of $\Delta_K(\phi_k^{\mv} \wedge \pi_P^*\alpha)$ over $G/P$, and similarly for $\delta_K \circ \Phi_{k, \lambda}^{\mv}$. However, since the bundle $V_P[\lambda]$ does not carry a $G$-invariant connection, the results from Theorem \ref{thm_Poisson_transform_differential_operators}(ii) on the composition of the covariant operators on $G/K$ do not apply. Thus, we first need to prove the following Lemma.

\begin{lemma}\label{lem_K_derivative_density}
Let $\sigma$ be a $\lambda$-density on $G/P$. Then the $K$-derivative of the pullback $\pi_P^*\sigma \in C^{\infty}(G/M)$ is given by
\begin{align*}
 d_K\pi_P^*\sigma = \lambda \pi_P^*\sigma \cdot E^*.
\end{align*}
\end{lemma}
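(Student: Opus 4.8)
The plan is to compute the $K$-derivative of the function $\pi_P^*\sigma$ directly from the definition of the canonical connection on $G/M$, using that $d_K$ is the component of the exterior derivative of a function along the subbundle $\ker(T\pi_K) = G\times_M(\xk/\xm)$. Since $\xk/\xm$ is spanned by the vectors $G_Y = (Y,0,Y)$ for $Y\in\mr^n$, it suffices to evaluate $d_K\pi_P^*\sigma$ on each $G_Y$ and to show this equals $\lambda\,\sigma\cdot E^*(G_Y)$ at the base point; equivariance then propagates the identity over all of $G/M$. The right-hand side is easy to understand: $E^*$ is the $M$-invariant $1$-form dual to $E=(0,1,0)$ in the basis $\{E, F_X, G_Y\}$ of $\xg/\xm$, so $E^*(G_Y)=0$ for every $Y$, and hence the claimed formula is equivalent to the assertion that $d_K\pi_P^*\sigma$ vanishes identically.

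The key step is therefore to show $d_K\pi_P^*\sigma = 0$, which amounts to the statement that the pullback of a $\lambda$-density is constant along the $K$-directions in $G/M$. Concretely, represent $\sigma$ by a $P$-equivariant function $f\colon G\to\mr$ with $f(gp)=a^{\lambda}f(g)$ for $p=man$; the pullback $\pi_P^*\sigma$ is just $f$ viewed as an $M$-equivariant function on $G$, and $d_K\pi_P^*\sigma$ evaluated on $G_Y$ is the derivative of $f$ along the right-invariant vector field generated by $G_Y\in\xk$. Using the decomposition $G_Y = (Y,0,Y) = \eta_Y + \xi_Y$ into its $\xg_{-1}$ and $\xg_1$ parts (where $\eta_Y\in\xg_{-1}$, $\xi_Y\in\xg_1$), differentiating $f$ along $\xi_Y\in\xp_+\subset\xp$ gives zero because $\exp(t\xi_Y)\in N\subset P$ and $f$ is $P$-invariant up to the density factor which is trivial on $N$; differentiating along $\eta_Y\in\xg_{-1}$ vanishes because $\xg_{-1}\subset\xk\oplus(\text{horizontal part for }\pi_P)$ — more precisely, $\eta_Y$ is exactly the horizontal lift direction for the projection $\pi_P$, along which the $P$-equivariant function $f$ carries no information about the $K$-geometry. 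I would make this precise by writing $G_Y$ in terms of the canonical connection on $G/M\to G/P$: the point is that the $(\xk/\xm)$-direction $G_Y$ projects under $T\pi_P$ to the tangent vector $\eta_Y\in\xg/\xp$, and the density bundle is associated to a representation of $P$ that is trivial on the nilradical, so its pullback is annihilated precisely in those directions which involve only $\xg_{-1}$ and $\xp_+$ — which is all of $\xk/\xm$.

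The main obstacle is bookkeeping the interplay between the two fibrations $G/M\to G/P$ and $G/M\to G/K$ and being careful about which connection governs $d_K$: $d_K$ is defined via the pullback of the canonical connection on $G/K$, not via any connection on the density bundle (which does not exist $G$-invariantly on $G/P$). I would resolve this by observing that $d_K$ on functions is simply the exterior derivative restricted to the $(\xk/\xm)$-directions, so no connection subtlety actually enters for $0$-forms; the whole computation reduces to the Lie-theoretic fact that $\xk/\xm\subset\xg_{-1}\oplus\xg_1$ meets $\xg_0$ trivially, combined with $P$-equivariance of $f$ and triviality of the $\lambda$-density representation on $AN$ along these directions. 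Once $d_K\pi_P^*\sigma=0$ is established, the stated identity $d_K\pi_P^*\sigma = \lambda\,\pi_P^*\sigma\cdot E^*$ holds because both sides vanish, $E^*$ being zero on $\xk/\xm$ — and this phrasing is the one needed for the subsequent computation of $\delta_K\circ\Phi_{k,\lambda}^{\mv}$ and $\Delta_K\circ\Phi_{k,\lambda}^{\mv}$, where $E^*$ will appear naturally after applying $d_K^\theta$ and contracting.
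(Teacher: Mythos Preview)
Your proposal contains a fundamental misidentification of what $d_K$ does. You claim that $d_K$ on a function is the component of the exterior derivative along $\ker(T\pi_K)=G\times_M(\xk/\xm)$, and therefore that $d_K\pi_P^*\sigma$ is obtained by differentiating along the vectors $G_Y$. This is backwards. By definition, $d_K$ raises the \emph{first} bidegree: on a function it produces an element of $\Omega^{1,0}(G/M)$, i.e.\ a section of $\pi_K^*T^*(G/K)$. Such a form vanishes on $\ker(T\pi_K)=\xk/\xm$ and is detected only by vectors that project nontrivially to $T(G/K)$, namely the $\xp/\xm$-directions. (This is exactly why Lemma~\ref{lem_E_invariant} says $E^*$ has bidegree $(1,0)$: the grading element $E$ lies in $\xp/\xm$.) Consequently $d_K\pi_P^*\sigma$ must be evaluated on $E$ and on the $F_X$, not on the $G_Y$.

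Your conclusion that ``both sides vanish'' is therefore wrong: the right-hand side $\lambda\,\pi_P^*\sigma\cdot E^*$ is a nonzero $(1,0)$-form whenever $\lambda\neq 0$ and $\sigma\neq 0$, since $E^*(E)=1$. The correct argument, as in the paper, computes $(d_Kf)(X+\xm)=(L_Xf)$ for $X\in\xp$ using the $P$-equivariance of $f\colon G\to\mr[\lambda]$: on $\xn=\xg_1$ and on $\xm$ the representation $\mr[\lambda]$ is trivial so the derivative vanishes, while on $\xa=\mr\tilde E$ one picks up exactly the factor $\lambda$. Your computation of the derivative along $\xg_1$ (the $\xi_Y$ piece) is actually the relevant half of this argument, but you then discarded the crucial $\xa$-direction by mislabelling it as a $G/P$-direction. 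Note also that if your version were correct, the $\lambda$-dependence in Proposition~\ref{prop_Delta_K_density} and Theorem~\ref{thm_twisted_BGG} would disappear entirely, which should have been a red flag.
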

\begin{proof}
 If $f \colon G \to \mr[\lambda]$ is the $P$-equivariant map corresponding to $\sigma$, then the pullback $\pi_P^*\sigma$ corresponds to the same map, viewed as an $M$-equivariant function. Thus, its $K$-derivative $d_K\pi_P^*\sigma \in \Omega^{1,0}(G/M)$ corresponds to the equivariant map $d_Kf \colon G \to (\xp/\xm)^* \otimes \mr[\lambda]$ given by $(d_Kf)(X + \xm)(g) = (L_X f)(g)$ for all $g \in G$, where $L_X \in \mathfrak{X}(G)$ denotes the left invariant vector field determined by $X \in \xg$. We will compute the derivative explicitly by using the Langlands decomposition $\xp = \xm \oplus \xa \oplus \xn$. First, the subalgebras $\xn = \xg_1$ and $\xm$ act trivially on $\mr[\lambda]$, implying that $d_Kf$ is trivial for $X \in \xm \oplus \xn$. On the other hand, the subalgebra $\xa$ is generated by the grading element $\tilde{E}$, which projects to the $M$-invariant vector $E \in \xg/\xm$. By $P$-equivariance of $f$ we obtain
 \begin{align*}
  (d_Kf)(g)(E) = (L_{\tilde{E}}f)(g) = \lambda \tilde{E} \cdot f(g),
 \end{align*}
and since the grading element acts by the identity on $\mr[\lambda]$ the claim follows.
\end{proof}

Next, we define several $M$-equivariant linear operators related to the invariant vector $E \in \xg/\xm$ which appear in the formula for $\Delta_K(\phi_k^{\mv} \wedge \pi_P^*\alpha)$ and compute the images of Poisson kernels under these operator. First of all, consider the $\xg$-actions $\rho$ and $\rho^\theta$ on $f \in \End(\mv)$ by
\begin{align*}
 \left(\rho_X f\right)(v) &:= X \cdot f(v) - f(X \cdot v), & \left(\rho^{\theta}_X f\right)(v) &:= \theta(X) \cdot f(v) - f(X \cdot v)
\end{align*}
for all $v \in \mv$ and $X \in \xg$, and denote their trivial extensions to $\phi \in \Lambda^k(\xg/\xm)^* \otimes \End(\mv)$ by the same symbols. Via this notation, the $M$-equivariant map corresponding to the derivative is for all $\xi_j = X_j + \xm \in \xg/\xm$ given by the formula
\begin{equation}\label{eqn_derivative}
\begin{aligned} d\phi(\xi_0, \dotsc, \xi_k) =\ &\sum_{i=0}^k (-1)^i \rho_{X_i}(\phi)(\xi_0, \dotsc, \hat{\imath}, \dotsc, \xi_k) \\
 &+ \sum_{i<j} (-1)^{i+j} \phi([X_i, X_j] + \xm, \xi_0, \dotsc, \hat{\imath}, \dotsc, \hat{\jmath}, \dotsc, \xi_k),
 \end{aligned}
\end{equation}
where the hat denotes omission. Next, we define for all $\xi \in \xp/\xm$ the maps $\cl_\xi := \iota_{\xi}d_K + d_K\iota_{\xi}$, and similarly for $\cl_{\xi}^\theta$, as well as $\cl_{\xi}^*:= \xi^{\flat} \wedge \delta_K + \delta_K \xi^{\flat} \wedge$, which are the infinitesimal versions of the Lie derivatives and the adjoint with respect to the $K$-Hodge star operator. In addition, for each subspace $\mw \subset \mv$ we denote by $\phi|_{\mw}$ the restriction of the values of $\phi$ to $\mw$, i.e.
\begin{align*}
 (\phi|_{\mw})(\xi_1, \dotsc, \xi_k) := \phi(\xi_1, \dotsc, \xi_k)|_{\mw}
\end{align*}
for all $\xi_1, \dotsc, \xi_k \in \xg/\xm$. 

\begin{lemma}\label{lem_lie_derivative}
Let $\phi$ be any Poisson kernel of bidegree $(\ell,n-k)$ with values in $\End(\mv)$. Then
\begin{enumerate}[(i)]
\item $\cl_E\phi =  \rho_E(\phi) + (n-k-\ell) \phi + E^* \wedge \iota_E\phi$ and similarly for $\cl_E^{\theta}$ by exchanging $\rho_E$ with $\rho_E^{\theta}$.
\item $\cl_E^*\phi = -\rho_E^{\theta}(\phi) - (\ell - k - 1)\phi - \iota_E(E^* \wedge \phi)$.
\item The grading element $\tilde{E}$ satisfies $\rho_{\tilde{E}}(\phi) + \rho_{\tilde{E}}^\theta(\phi) = 2\left(\left.\phi\right|_{\mv_{-1}} - \left.\phi\right|_{\mv_{1}}\right)$.
\end{enumerate}
\end{lemma}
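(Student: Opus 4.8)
The plan is to establish all three identities at the level of the underlying $M$-representations, i.e.\ as $M$-equivariant endomorphisms of $\Lambda^*(\xg/\xm)^*\otimes\End(\mv)$. Two structural observations drive everything. First, since $\xg$ is $|1|$-graded and $\xm\subset\xg_0$, we have $[\tilde E,\xm]=0$, so $\mathrm{ad}(\tilde E)$ descends to a well-defined $M$-equivariant operator on $\xg/\xm$, namely multiplication by $j$ on $\xg_j$; concretely it is $+1$ on $\xn=\xg_1$, vanishes on $\langle E\rangle$, and satisfies $\mathrm{ad}(\tilde E)(G_Y)=2F_Y-G_Y$ on $\xk/\xm$. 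Second, since the Poisson kernel $\phi$ has $M$-equivariant values, $\rho_X\phi=\rho^\theta_X\phi=0$ for $X\in\xm$, so $\rho_E$, $\rho^\theta_E$ are unambiguous and $\rho_E=\rho_{\tilde E}$. With this, part (iii) is immediate: $\tilde E=\mathrm{diag}(1,0_n,-1)$ is symmetric, so $\theta(\tilde E)=-\tilde E$ and hence $(\rho_{\tilde E}+\rho^\theta_{\tilde E})(f)=(\tilde E+\theta(\tilde E))\circ f-2f\circ\tilde E=-2f\circ\tilde E$ for $f\in\End(\mv)$; since $\tilde E$ acts by $j$ on $\mv_j$ this is $2(f|_{\mv_{-1}}-f|_{\mv_1})$, and tensoring with the identity on $\Lambda^*(\xg/\xm)^*$ gives the claim.

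For (i) I would apply Cartan's magic formula to the \emph{full} covariant exterior derivative $d^E=d_K+d_P$ on $G/M$, using the explicit formula (\ref{eqn_derivative}). In $\iota_Ed^E\phi+d^E\iota_E\phi$ all the ``action'' terms except the $E$-derivative of the coefficient cancel pairwise, and so do all the higher bracket terms, leaving
\[
 \iota_Ed^E\phi+d^E\iota_E\phi \;=\; \rho_{\tilde E}\phi \;+\; \sum_j(-1)^j\,\phi\bigl(\mathrm{ad}(\tilde E)\xi_j,\xi_1,\dots,\widehat{\xi_j},\dots\bigr).
\]
Since $E\in\xp/\xm$ is a ``$K$-direction'', $\iota_E$ drops the first bidegree while $d_K$, $d_P$ raise it by $(1,0)$, $(0,1)$; thus $\cl_E=\iota_Ed_K+d_K\iota_E$ is exactly the bidegree-preserving component of the left-hand side, and I would extract the corresponding component on the right. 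The tensorial term contributes $\rho_E\phi$ in full; each of the $\ell$ $K$-legs contributes, via $\mathrm{ad}(\tilde E)\xi_j=\xi_j-E^*(\xi_j)E$, a summand $-\phi$ together with a piece which assembles over $j$ to $E^*\wedge\iota_E\phi$; and each of the $n-k$ $P$-legs contributes, via $\mathrm{ad}(\tilde E)G_{Y_j}=-G_{Y_j}+2F_{Y_j}$, a summand $+\phi$ from the $-G_{Y_j}$ part, the $2F_{Y_j}$ part raising the first bidegree and being discarded. This yields $\cl_E\phi=\rho_E\phi+(n-k-\ell)\phi+E^*\wedge\iota_E\phi$; for $\cl^\theta_E$ the identical computation applies, since the bracket terms are geometric and only the surviving coefficient-derivative picks up the twist, so $\rho_E$ is replaced by $\rho^\theta_E$.

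For (ii) I would deduce the formula from the $\theta$-version of (i) by Hodge duality. By construction $\cl^*_E$ is the formal $\ast_K$-adjoint of $\cl_E$: the $K$-Hodge star interchanges $\iota_E$ with $E^*\wedge(\cdot)$ up to sign, and by the very definition $\delta_K=(-1)^p\ast_K^{-1}d^\theta_K\ast_K$ it interchanges $d_K$ with $\pm\delta_K$ while building in the $\theta$-twist; hence $\cl^*_E\phi=\pm\,\ast_K^{-1}\cl^\theta_E(\ast_K\phi)$. Since $\ast_K$ acts only on the $K$-form legs and the $W_M$-factor, it commutes with $\rho^\theta_E$ and sends bidegree $(\ell,n-k)$ to $(n+1-\ell,n-k)$; so applying the $\theta$-version of (i) to $\ast_K\phi$ and conjugating back by $\ast_K^{-1}$ replaces the coefficient $(n-k-\ell)$ by $(n-k)-(n+1-\ell)=\ell-k-1$ and the term $E^*\wedge\iota_E(\cdot)$ by $\iota_E(E^*\wedge(\cdot))$; a sign count then gives overall sign $-1$, hence (ii).

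The step I expect to be the main obstacle is the bidegree bookkeeping in (i): one has to isolate the bidegree-preserving part of the full Lie derivative correctly, the delicate point being that $\xk/\xm$ is not $\mathrm{ad}(\tilde E)$-invariant, so that the spurious bidegree-$(\ell+1,n-k-1)$ contributions coming from the $F_Y$-terms must be dropped; the analogous but lighter difficulty in (ii) is the sign-chasing through the $\ast_K$-conjugation. Everything else is routine.
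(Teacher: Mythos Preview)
Your argument is correct and follows the same route as the paper: part (iii) via $\theta(\tilde E)=-\tilde E$, part (i) by a direct computation with the Cartan-type formula from (\ref{eqn_derivative}) together with the eigenvalues of $\mathrm{ad}(\tilde E)$ on the graded pieces, and part (ii) by conjugating the $\theta$-version of (i) through $\ast_K$ using $E^*\wedge\iota_E(\ast_K\phi)=\ast_K\iota_E(E^*\wedge\phi)$. Your write-up is simply more explicit than the paper's terse ``direct computation''; the only cosmetic point is that the $2F_{Y_j}$ contribution in (i) vanishes because $\phi$ of bidegree $(\ell,n-k)$ is zero on $\ell+1$ arguments from $\xp/\xm$, which is what your ``discard the bidegree-shifting part'' really amounts to.
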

\begin{proof}
\begin{enumerate}[(i)]
 \item Follows by a direct computation using (\ref{eqn_derivative}) and the fact that the grading element acts on $\xg_i$ by multiplication with $i$.

\item Since we can write $\cl_E^*\phi = -\ast_K^{-1} \cl_E^{\theta}(\ast_K \phi)$ the claim follows from (i) by applying the identity $E^* \wedge \iota_E(\ast_K\phi) = \ast_K \iota_E(E^* \wedge \phi)$.

\item The grading element is in the $(-1)$-eigenspace of the Cartan involution $\theta$, which implies for $f \in \End(\mv)$ that
\begin{align*}
(\rho_{\tilde{E}} + \rho^\theta_{\tilde{E}})(f)(v) = -2f(\tilde{E} \cdot v)
\end{align*}
for all $v \in \mv$. Furthermore, $\tilde{E}$ acts on $\mv_j$ by multiplication with $j$, so the right hand side equals $2(f|_{\mv_{-1}} - f|_{\mv_1})(v)$. Since the $\xg$-action on the  space of Poisson kernels is the trivial extension of the action on $\End(\mv)$, the claim follows.
\qedhere
\end{enumerate}
\end{proof}

Using these formulae we can determine the images of a weighted standard tractor bundle valued differential form on $G/P$ under the compositions of the Poisson transform with the covariant Laplace operator and the covariant codifferential on $G/K$, respectively. 

\begin{prop}\label{prop_Delta_K_density}
 For $1 \le k \le n-1$ let $\phi_k^{\mv}$ be the Poisson kernel constructed in section \ref{sec_construction_tractor_valued_poisson} and denote the corresponding $G$-invariant differential form by the same symbol. Then for all $\alpha \in \Omega^k(G/P, V_P[\lambda])$ we have 
 \begin{align*}
 \delta_K(\phi_k^{\mv} \wedge \pi_P^*\alpha) &=  - \lambda (\iota_E\phi_k^{\mv}) \wedge \pi_P^*\alpha\\
  \Delta_K(\phi_k^\mv \wedge \pi_P^*\alpha) &= -\lambda(n-2k + \lambda)\phi_k^{\mv}\wedge \pi_P^*\alpha - 2\lambda \left(\left.\phi_k^{\mv}\right|_{\mv_{-1}}\right) \wedge \pi_P^*\alpha.
 \end{align*}
\end{prop}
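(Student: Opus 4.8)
The strategy is to move $\delta_K$ and $\Delta_K$ inside the fibre integral defining $\Phi_{k,\lambda}^\mv$, i.e. to compute $\delta_K(\phi_k^\mv\wedge\pi_P^*\alpha)$ and $\Delta_K(\phi_k^\mv\wedge\pi_P^*\alpha)$ pointwise on $G/M$, exactly as in the proof of Theorem~\ref{thm_Poisson_transform_differential_operators}(ii). The only feature absent from the unweighted situation is the density twist on $\alpha$, which I would isolate first: extending Lemma~\ref{lem_K_derivative_density} from $\lambda$-densities to $V_P[\lambda]$-valued forms, one gets $d_K\pi_P^*\alpha=\lambda\,E^*\wedge\pi_P^*\alpha$ for every $\alpha\in\Omega^k(G/P,V_P[\lambda])$ -- indeed the pullback of the tractor connection on $V_P$ to $V_M$ over $G/M$ is again the tractor connection, so $\pi_P^*$ of a $V_P$-valued covariant exterior derivative has bidegree $(0,\ast)$, and the only $(1,k)$-component of $d_K\pi_P^*\alpha$ is the one produced by the density factor, where the computation is that of Lemma~\ref{lem_K_derivative_density}. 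From this I would deduce, for every Poisson kernel $\chi$ of $K$-degree $p$, the two Leibniz rules $d_K(\chi\wedge\pi_P^*\alpha)=(d_K\chi+\lambda\,E^*\wedge\chi)\wedge\pi_P^*\alpha$ and $\delta_K(\chi\wedge\pi_P^*\alpha)=(\delta_K\chi)\wedge\pi_P^*\alpha+(-1)^{p+1}\lambda\,(\iota_E\chi)\wedge\pi_P^*\alpha$, the second by writing $\delta_K=(-1)^p\ast_K^{-1}d_K^\theta\ast_K$, pulling $\ast_K$ and $d_K^\theta$ past $\pi_P^*\alpha$ (using $\ast_K(\chi\wedge\pi_P^*\alpha)=(\ast_K\chi)\wedge\pi_P^*\alpha$ and $d_K^\theta\ast_K\chi=(-1)^p\ast_K\delta_K\chi$), inserting $d_K\pi_P^*\alpha=\lambda E^*\wedge\pi_P^*\alpha$, and rewriting $\ast_K^{-1}(E^*\wedge\ast_K\chi)$ as a sign times $\iota_E\chi$ (here $E\in\xp/\xm$ is a direction of $G/K$ with $E^\flat=E^*$).

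For the first identity, apply the second Leibniz rule with $\chi=\phi_k^\mv$ (so $p=k$) and use that $\phi_k^\mv$ is $K$-coclosed, $\delta_K\phi_k^\mv=0$ (established in section~\ref{sec_construction_tractor_valued_poisson}); this gives at once $\delta_K(\phi_k^\mv\wedge\pi_P^*\alpha)=-\lambda\,(\iota_E\phi_k^\mv)\wedge\pi_P^*\alpha$.

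For the second identity, write $\Delta_K=d_K\delta_K+\delta_K d_K$. Feeding the first identity into $d_K\delta_K$ and using the first Leibniz rule with $\chi=\iota_E\phi_k^\mv$ produces $-\lambda\,d_K\iota_E\phi_k^\mv\wedge\pi_P^*\alpha-\lambda^2\,E^*\wedge\iota_E\phi_k^\mv\wedge\pi_P^*\alpha$. Expanding $d_K(\phi_k^\mv\wedge\pi_P^*\alpha)=(d_K\phi_k^\mv+\lambda E^*\wedge\phi_k^\mv)\wedge\pi_P^*\alpha$ and then applying $\delta_K$ via the second Leibniz rule to the two kernels $d_K\phi_k^\mv$ and $E^*\wedge\phi_k^\mv$ produces $-\lambda\,\iota_E d_K\phi_k^\mv\wedge\pi_P^*\alpha+\lambda\,\delta_K(E^*\wedge\phi_k^\mv)\wedge\pi_P^*\alpha-\lambda^2\,\iota_E(E^*\wedge\phi_k^\mv)\wedge\pi_P^*\alpha$; here I use $\delta_Kd_K\phi_k^\mv=0$, which holds because $\phi_k^\mv$ is in fact $K$-harmonic (by Theorem~\ref{thm_harmonic_bgg} the image of $\Phi_k^\mv$ lies in $\ker\Delta^{V_K}$ since $\partial_P^*\phi_k^\mv=\partial_P^*d_P\phi_k^\mv=0$, so the Poisson transform $\Delta^{V_K}\circ\Phi_k^\mv$ vanishes and hence $\Delta_K\phi_k^\mv=0$ by Theorem~\ref{thm_bijective_correspondence_transforms_kernel}), together with $\delta_K(E^*\wedge\phi_k^\mv)=\cl_E^*\phi_k^\mv$. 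The uncontrolled kernel derivative $d_K\phi_k^\mv$ occurs only through $d_K\iota_E\phi_k^\mv+\iota_E d_K\phi_k^\mv=\cl_E\phi_k^\mv$ and so disappears. Using $\iota_E(E^*\wedge\phi_k^\mv)=\phi_k^\mv-E^*\wedge\iota_E\phi_k^\mv$, the two $E^*\wedge\iota_E\phi_k^\mv$-terms cancel, leaving $\Delta_K(\phi_k^\mv\wedge\pi_P^*\alpha)=\bigl(-\lambda(\cl_E-\cl_E^*)\phi_k^\mv-\lambda^2\phi_k^\mv\bigr)\wedge\pi_P^*\alpha$. Finally Lemma~\ref{lem_lie_derivative}(i)--(ii) (for $\phi_k^\mv$ of bidegree $(k,n-k)$) give $(\cl_E-\cl_E^*)\phi_k^\mv=\rho_E\phi_k^\mv+\rho_E^\theta\phi_k^\mv+(n-2k)\phi_k^\mv$, again with the $E^*\wedge\iota_E$-parts cancelling, and Lemma~\ref{lem_lie_derivative}(iii) turns $\rho_E\phi_k^\mv+\rho_E^\theta\phi_k^\mv$ into $2(\phi_k^\mv|_{\mv_{-1}}-\phi_k^\mv|_{\mv_1})=2\,\phi_k^\mv|_{\mv_{-1}}$, using that $\phi_k^\mv$ takes values in $L(\mv,\mv_T)$ and vanishes on $\mv_1$ by its construction from $\sigma_k$ and $\sigma_{-1}$; assembling these pieces yields $\Delta_K(\phi_k^\mv\wedge\pi_P^*\alpha)=-\lambda(n-2k+\lambda)\phi_k^\mv\wedge\pi_P^*\alpha-2\lambda\,(\phi_k^\mv|_{\mv_{-1}})\wedge\pi_P^*\alpha$.

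The conceptual content here is light; the real work, and the main obstacle, is the sign and coefficient bookkeeping in the last step -- making sure the $d_K\phi_k^\mv$-contributions genuinely recombine into $\cl_E\phi_k^\mv$ (so that no intractable kernel derivative survives), that the $E^*\wedge\iota_E\phi_k^\mv$-terms cancel both at order $\lambda$ and between orders $\lambda$ and $\lambda^2$, and that the surviving $\rho$-terms appear in precisely the combination $\rho_E+\rho_E^\theta$ needed to invoke Lemma~\ref{lem_lie_derivative}(iii). A minor supplementary point is justifying in the first step that $E^\flat=E^*$ under the normalisation of the metric on $G/K$ that is in use.
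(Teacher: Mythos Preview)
Your approach is essentially the paper's own: derive the Leibniz rules $d_K(\phi\wedge\pi_P^*\alpha)=(d_K\phi+\lambda E^*\wedge\phi)\wedge\pi_P^*\alpha$ and $\delta_K(\phi\wedge\pi_P^*\alpha)=(\delta_K\phi-\lambda\iota_E\phi)\wedge\pi_P^*\alpha$ from Lemma~\ref{lem_K_derivative_density}, combine them into $\Delta_K(\phi\wedge\pi_P^*\alpha)=(\Delta_K-\lambda\cl_E+\lambda\cl_E^*-\lambda^2)\phi\wedge\pi_P^*\alpha$, and then invoke Lemma~\ref{lem_lie_derivative} together with $\Delta_K\phi_k^{\mv}=0$, $\delta_K\phi_k^{\mv}=0$, $\phi_k^{\mv}|_{\mv_1}=0$. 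One bookkeeping slip to fix: the extra term in your $\delta_K$-Leibniz rule carries the sign $-\lambda$ uniformly, not $(-1)^{p+1}\lambda$ (the $(-1)^p$ in $\delta_K$ and the parity of $\ast_K\chi$ conspire so that no $p$-dependent sign survives), which is why your specialisation to $p=k$ correctly yields $-\lambda$ for all $k$.
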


\begin{proof}
For any Poisson kernel $\phi \in \Omega^{\ell,n-k}(G/M, G \times_M \End(\mv))$ we define the $(k,n)$-form $\phi_{\lambda} := \phi \wedge \pi_P^*\alpha$ on $G/M$. Combining the formula for the $K$-derivative of the pullback of a density from Lemma \ref{lem_K_derivative_density} and the definition of the $K$-codifferential we directly compute that
 \begin{align*}
  d_K\phi_{\lambda} &= \left(d_K\phi + \lambda E^* \wedge \phi\right) \wedge \pi_P^*\alpha,  & \delta_K\phi_{\lambda} &=\left(\delta_K\phi - \lambda \iota_E\phi\right) \wedge \pi_P^*\alpha.
 \end{align*}
Putting these two formulae together and applying Lemma \ref{lem_lie_derivative} we can write the image of $\phi_{\lambda}$ under the $K$-Laplace as
\begin{align*}
 \Delta_K\phi_{\lambda} &= \left(\Delta_K - \lambda \cl_E + \lambda \cl_E^* - \lambda^2\right)\phi \wedge \pi_P^*\alpha \\
 &= \left(\Delta_K\phi + 2 \lambda \phi|_{\mv_1} - 2 \lambda \phi|_{\mv_{-1}} - \lambda(n-2k + \lambda) \phi\right) \wedge \pi_P^*\alpha.
\end{align*}
In particular, for $\phi = \phi_k^{\mv}$ we have $\Delta_K\phi = 0$, $\delta_k\phi = 0$ and $\phi|_{\mv_1} = 0$. 
\end{proof}

In particular, combining Proposition \ref{prop_Delta_K_density} with Theorem \ref{thm_Poisson_transform_differential_operators} we see that for $\alpha \in \Omega^k(G/P, V_P[\lambda])$ the image $\Phi_k^{\mv}(\alpha)$ is in general not an eigenform of the twisted Laplace operator $\Delta^{V_K}$ on $G/K$. However, if we factor the Poisson transforms to the corresponding homology bundles, we have the following Theorem. 

\begin{thm}\label{thm_twisted_BGG}
 Let $G = \SO(n+1,1)_0$, $K \cong \SO(n+1)$ its maximal compact subgroup and $P$ its minimal parabolic subgroup. Let $\mv$ be the standard representation of $G$ and $V_P$ the standard tractor bundles over $G/P$. For $\lambda \in \mr$ let 
 \begin{align*}
 \underline{\Phi}_{k, \lambda}^{\mv} \colon \Gamma(\ch_k(G/P, V_P[\lambda])) \to \Omega^k(G/K, T(G/K))
\end{align*}
be the $G$-equivariant linear operators induced by the Poisson kernels $\phi_k^{\mv}$. Then the image of $\underline{\Phi}_{k, \lambda}^{\mv}$ consists of coclosed eigenforms of the twisted Laplace Beltrami operator with eigenvalue $-\lambda(n-2k+\lambda)$.
\end{thm}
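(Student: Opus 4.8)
The strategy is to reduce the claimed spectral property of $\underline{\Phi}_{k,\lambda}^{\mv}$ to the explicit formulae in Proposition \ref{prop_Delta_K_density}, exactly as the passage preceding the theorem already hints. First I would recall that an element of $\Gamma(\ch_k(G/P,V_P[\lambda]))$ is represented by some $\alpha \in \Omega^k(G/P,V_P[\lambda])$, and by the discussion in section \ref{sec_equivariant_operators} together with Corollary \ref{cor_bgg_poisson_kernel} the value $\underline{\Phi}_{k,\lambda}^{\mv}$ on the class is $\Phi_{k,\lambda}^{\mv}(\alpha) = \fint_{G/P}\phi_k^{\mv}\wedge\pi_P^*\alpha$, independent of the chosen representative. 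The coclosedness is the easy half: by the proof-technique of Theorem \ref{thm_Poisson_transform_differential_operators}(ii) (local description of the fibre integral), $\delta^{V_K}\Phi_{k,\lambda}^{\mv}(\alpha)$ equals $\fint_{G/P}\delta_K(\phi_k^{\mv}\wedge\pi_P^*\alpha)$, and by the first formula in Proposition \ref{prop_Delta_K_density} this is $-\lambda\fint_{G/P}(\iota_E\phi_k^{\mv})\wedge\pi_P^*\alpha$. The form $\iota_E\phi_k^{\mv}$ has bidegree $(k,n-k-1)$, so $(\iota_E\phi_k^{\mv})\wedge\pi_P^*\alpha$ has $P$-degree $n-1<n$ and the fibre integral over the $n$-dimensional $G/P$ kills it; hence the image is coclosed.

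For the eigenvalue statement I would similarly write $\Delta^{V_K}\Phi_{k,\lambda}^{\mv}(\alpha) = \fint_{G/P}\Delta_K(\phi_k^{\mv}\wedge\pi_P^*\alpha)$, and substitute the second formula of Proposition \ref{prop_Delta_K_density}:
\begin{align*}
 \Delta^{V_K}\Phi_{k,\lambda}^{\mv}(\alpha) = -\lambda(n-2k+\lambda)\fint_{G/P}\phi_k^{\mv}\wedge\pi_P^*\alpha - 2\lambda\fint_{G/P}\bigl(\phi_k^{\mv}\big|_{\mv_{-1}}\bigr)\wedge\pi_P^*\alpha.
\end{align*}
The first term is exactly $-\lambda(n-2k+\lambda)\Phi_{k,\lambda}^{\mv}(\alpha)$, so everything comes down to showing the second integral vanishes. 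This is where the genuine work lies, and I expect it to be the main obstacle. The point is that $\phi_k^{\mv}\big|_{\mv_{-1}}$ is a Poisson kernel taking values in $L(\mv_{-1},\mv_T)$, and by Proposition \ref{prop_relations_P_codifferential}(iii) the space $\Lambda^{k,n-k}(\xg/\xm)^*\otimes L(\mv_{-1},\mv)$ lies in the image of the $P$-codifferential for $n-k\le n-1$, i.e.\ for $k\ge 1$. Thus $\phi_k^{\mv}\big|_{\mv_{-1}} = \partial_P^*\psi$ for some $\psi$ of bidegree $(k,n-k+1)$. By Theorem \ref{thm_Poisson_transform_differential_operators}(i) the Poisson transform with kernel $\partial_P^*\psi$ is (up to sign) $\Phi'\circ\partial^*$ for the transform $\Phi'$ with kernel $\psi$; but $\partial^*$ vanishes on $\ker(\partial^*)$, and on a homology class the representative $\alpha$ may be taken coclosed — more robustly, since $\underline{\Phi}_{k,\lambda}^{\mv}$ is well-defined on homology and $\Phi_{k,\lambda}^{\mv}\circ\partial^*$ contributes nothing to the class (its kernel $\partial_P^*\psi$ composed against $\pi_P^*\partial^*\beta$-type terms integrates to a quantity that only sees $\partial^*\alpha$, which is $0$ for a representative in $\ker\partial^*$). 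Hence $\fint_{G/P}(\phi_k^{\mv}\big|_{\mv_{-1}})\wedge\pi_P^*\alpha = \pm\Phi'(\partial^*\alpha) = 0$.

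I would then assemble the two halves: the image of $\underline{\Phi}_{k,\lambda}^{\mv}$ is annihilated by $\delta^{V_K}$ and satisfies $\Delta^{V_K}\Phi_{k,\lambda}^{\mv}(\alpha) = -\lambda(n-2k+\lambda)\Phi_{k,\lambda}^{\mv}(\alpha)$, which is precisely the assertion. A subtlety to be careful about is the identification of the "twisted Laplace Beltrami operator" in the statement with $\Delta^{V_K}$ on $\Omega^k(G/K,T(G/K))$ and the fact that $\phi_k^{\mv}$ has values in $L(\mv,\mv_T)$ so that its composition with $\pi_P^*\alpha$ indeed lands in $T(G/K)$-valued forms; this is built into the construction in section \ref{sec_construction_tractor_valued_poisson} and needs only a sentence. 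The only real computational content is packaged in Proposition \ref{prop_Delta_K_density} and Proposition \ref{prop_relations_P_codifferential}(iii), both already available, so the proof is short once the reduction to "the $\mv_{-1}$-term integrates to zero" is made and justified via exactness under $\partial_P^*$.
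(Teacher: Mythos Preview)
Your treatment of the Laplacian term is correct and matches the paper's argument: the $\phi_k^{\mv}|_{\mv_{-1}}$ kernel lies in the image of $\partial_P^*$ by Proposition \ref{prop_relations_P_codifferential}(iii), hence the associated transform annihilates $\ker(\partial^*)$.

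However, your coclosedness argument contains a genuine error. The element $E$ lies in $\xp/\xm$, and by Lemma \ref{lem_E_invariant} its dual $E^*$ has bidegree $(1,0)$; consequently $\iota_E$ lowers the \emph{first} index, so $\iota_E\phi_k^{\mv}$ has bidegree $(k-1,n-k)$, not $(k,n-k-1)$. The wedge $(\iota_E\phi_k^{\mv})\wedge\pi_P^*\alpha$ therefore has $P$-degree $n$, and the fibre integral does \emph{not} vanish for degree reasons --- indeed it must be a $(k-1)$-form on $G/K$, exactly as $\delta^{V_K}\Phi_{k,\lambda}^{\mv}(\alpha)$ should be.

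The fix is to run the same argument you used for the $\mv_{-1}$-term. Since $\phi_k^{(1)} = \ast_K(E^*\wedge\cdots)$ and $\iota_E\ast_K\psi = \pm\ast_K(E^*\wedge\psi)$, one gets $\iota_E\phi_k^{(1)} = 0$; thus $\iota_E\phi_k^{\mv} = \iota_E\phi_k^{(2)}$, and $\phi_k^{(2)}$ (hence $\iota_E\phi_k^{(2)}$) already takes values in $L(\mv_{-1},\mv_T)$ because $\sigma_{-1}(\xi)|_{\mv_0} = \sigma_{-1}(\xi)|_{\mv_1} = 0$. Now Proposition \ref{prop_relations_P_codifferential}(iii) applies (the second index $n-k\le n-1$ since $k\ge 1$), so $\iota_E\phi_k^{\mv}\in\im(\partial_P^*)$ and the associated transform vanishes on $\ker(\partial^*)$. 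This is precisely what the paper does: it treats $\iota_E\phi_k^{\mv}$ and $\phi_k^{\mv}|_{\mv_{-1}}$ uniformly via Proposition \ref{prop_relations_P_codifferential}(iii).
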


\begin{proof} For $\sigma \in \ch_k(G/P, V_P[\lambda])$ choose any representative $\alpha \in \Omega^k(G/P, V_P[\lambda])$ contained in the kernel of the Kostant codifferential. By Theorem \ref{thm_Poisson_transform_differential_operators}, the image of the $V_K$-valued differential form $\underline{\Phi}_k^{\mv}(\sigma)$ on $G/K$ under the covariant codifferential $\delta^{V_K}$ and the covariant Laplace operator $\Delta^{V_K}$ is given by the fibre integral of $\delta_K(\phi_k^{\mv} \wedge \pi_P^*\alpha)$ and $\Delta_K(\phi_k^{\mv} \wedge \pi_P^*\alpha)$, respectively. Using the results from Proposition \ref{prop_Delta_K_density} we need to show that the Poisson transforms associated to $\iota_E\phi_k^{\mv}$ and $\left. \phi_k^{\mv}\right|_{\mv_{-1}}$ are trivial on the kernel of the Kostant codifferential. By selfadjointness of the $P$-codifferential it suffices to prove that these two Poisson kernels are in the image $\partial_P^*$. However, by construction both these $M$-invariant elements are contained in $\Lambda^*(\xg/\xm)^* \otimes L(\mv_{-1}, \mv)$ and thus in the image of $\partial_P^*$ due to Proposition \ref{prop_relations_P_codifferential}(iii).
\end{proof}

\subsection{Compatibility of PTs with BGG-operators}
We conclude our discussion by proving that the Poisson transforms $\Phi_k^{\mv}$ also satisfy a commutation relation with the covariant exterior derivatives. We will do this by showing that the partial derivatives of the Poisson kernels coincide up to a multiple. 

First, we have to introduce some notation. By a \emph{$k$-vector} ${\bf X}$ of $\mr^n$ we mean a $k$-fold wedge product $X_1 \wedge \dotso \wedge X_k$ of vectors in $\mr^n$. We denote by $F_{\bf X}$ the wedge product $F_{X_1} \wedge \dotso \wedge F_{X_k}$, and similarly for $G_{\bf X}$. Finally, for $1 \le i \le k$ we write $F_{\bf X}^i$ for the wedge product obtained from $F_{\bf X}$ by omitting the $i$-th factor.

\begin{prop}\label{prop_density_K_derivative}
For all $k = 1, \dotsc, n-2$ the Poisson transforms $\Phi_k^{\mv}$ satisfy the relation
\begin{align*}
 (k+2) d^{V_K} \circ \Phi_k^{\mv} &= (-1)^{n-k} k (n-2k)  \Phi_{k+1}^{\mv} \circ d^{V_P}.
\end{align*}
In particular, if $n$ is even, then the image of $\Phi_{\frac{n}{2}}$ is contained in the space of closed and coclosed $V_K$-valued differential forms on $G/K$.
\end{prop}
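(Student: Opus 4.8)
The plan is to reduce the claimed operator identity to a purely algebraic statement about the Poisson kernels, using Theorem \ref{thm_Poisson_transform_differential_operators}(i)--(ii). Since $\Phi_k^{\mv}$ factors to the BGG-complex and the relevant $M$-invariant kernel encodes the composition with covariant derivatives, the relation $(k+2)d^{V_K}\circ\Phi_k^{\mv} = (-1)^{n-k}k(n-2k)\,\Phi_{k+1}^{\mv}\circ d^{V_P}$ is equivalent (again by the self-adjointness of the $P$-codifferential and the fact that both sides descend to $\Gamma(\ch_k(G/P,V_P))$) to the kernel-level identity
\begin{align*}
 (k+2)\,d_K\phi_k^{\mv} \ \equiv\ (-1)^{n-k}k(n-2k)\,(-1)^{n-k+1}\,d_P\phi_{k+1}^{\mv} \pmod{\im \partial_P^*},
\end{align*}
where the sign on the right comes from the formula for the kernel of $\Phi_{k+1}^{\mv}\circ d^{V_P}$ in Theorem \ref{thm_Poisson_transform_differential_operators}(i) with bidegree $(\ell,n-k) = (k+1,n-k-1)$. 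So the first step is to carefully track these signs and write down the precise congruence to be proven.

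The second step is to compute both partial derivatives explicitly. For $d_P\phi_{k+1}^{\mv}$ I would use formula (\ref{eqn_P_derivative_Poisson_kernel}), which already gives $d_P\phi_{k+1}^{\mv} = (-1)^{n+1}(k+2)\ast_K\big((d_PE^*)^{n-k-1}\wedge d_P\sigma_k\big)$. For $d_K\phi_k^{\mv} = k\,d_K\phi_k^{(1)} + d_K\phi_k^{(2)}$ I would use the commutation relation $d_K\ast_K = (-1)^{?}\ast_K d_K$ (obtained from $\delta_K$, $\ast_K$ and the definition of $\Delta_K$), together with $d_KE^* = 0$ from Lemma \ref{lem_E_invariant} and $d_K^{\theta}\sigma_k = 0$ from the construction of $\sigma_k$. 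The key point is that $d_K$ and $d_K^\theta$ differ by a tensorial term, so $d_K\sigma_k$ is tensorial and can be expressed via the operators $\rho_X$, $\rho_X^\theta$ from Lemma \ref{lem_lie_derivative} and the restriction maps $\phi\mapsto\phi|_{\mv_j}$; one then massages the resulting expression using relation (\ref{eqn_sigma}) and $d_PE^*(F_X,G_Y)=\langle X,Y\rangle$ from Lemma \ref{lem_E_invariant}.

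The third step is to show that the difference of the two sides lies in $\im\partial_P^*$. Here I expect the main obstacle: the two partial derivatives will \emph{not} be equal on the nose, only modulo the image of the $P$-codifferential, and identifying the error term requires recognizing it as an element of $\Lambda^{k+1,n-k-1}(\xg/\xm)^*\otimes L(\mv_{-1},\mv_T)$, which by Proposition \ref{prop_relations_P_codifferential}(iii) automatically lies in $\im\partial_P^*$. Concretely, the terms in $d_K\phi_k^{\mv}$ involving $\phi_k^{\mv}|_{\mv_{-1}}$ or $\iota_E$-type contractions (compare the appearance of $\phi_k^{\mv}|_{\mv_{-1}}$ in Proposition \ref{prop_Delta_K_density}) should be exactly the part that has values in $L(\mv_{-1},\mv_T)$; everything with values in $L(\mv_0,\mv_T)$ must match the right-hand side exactly after using (\ref{eqn_sigma}) to collapse $k\sigma_k + \sigma_{-1}$ into $(k+1)\sigma_{k-1}$ — this is where the combinatorial factor $k(n-2k)$ versus $(k+2)$ emerges from counting how $d_KE^*$-free and $(d_PE^*)$-power terms recombine. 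I would organize this by splitting according to the $M$-module decomposition $\mv = \mv_{-1}\oplus\mv_0\oplus\mv_1$ of the target and checking the $\mv_0$-component identity directly, discarding the $\mv_{-1}$-component into $\im\partial_P^*$ and noting the $\mv_1$-component vanishes since $\phi_k^{\mv}|_{\mv_1}=0$.

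Finally, for the last sentence of the statement: when $n$ is even and $k=\tfrac n2$, the factor $n-2k$ vanishes, so the relation gives $d^{V_K}\circ\Phi_{n/2}^{\mv}=0$, i.e. the image consists of closed forms; coclosedness was already established (Theorem \ref{thm_harmonic_bgg} together with Theorem \ref{thm_Poisson_transform_differential_operators}), so the image is closed and coclosed. This step is immediate once the main identity is in hand.
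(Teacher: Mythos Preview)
Your reduction via Theorem \ref{thm_Poisson_transform_differential_operators} is the right starting point, but the claimed equivalence with a congruence modulo $\im\partial_P^*$ is a genuine gap. The Proposition is stated for the operators $\Phi_k^{\mv}$ on \emph{all} of $\Omega^k(G/P,V_P)$, and by the bijection of Theorem \ref{thm_bijective_correspondence_transforms_kernel} this forces the \emph{exact} kernel identity $(k+2)\,d_K\phi_k^{\mv} = (-1)^{k+1}k(n-2k)\,d_P\phi_{k+1}^{\mv}$; a kernel lying in $\im\partial_P^*$ gives a transform that vanishes on $\ker\partial^*$ but not on general forms, so your modular argument would only establish the weaker statement needed for Theorem \ref{thm_relation_BGG_derivatives}. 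In fact the paper proves the kernel identity on the nose, so the ``error terms in $L(\mv_{-1},\mv_T)$'' you anticipate do not occur. A second technical problem is that your computation of $d_K\phi_k^{\mv}$ leans on a commutation $d_K\ast_K = \pm\ast_K d_K$, which is not available: only $d_K^{\theta}$ is tied to $\ast_K$ through $\delta_K$, and the passage from $d_K^{\theta}$ to $d_K$ is not a simple sign.

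The paper's route is rather different. It uses the algebraic splitting $\omega = E^*\wedge\iota_E\omega + \iota_E(E^*\wedge\omega)$ to break $d_K\phi_k^{\mv}$ into two pieces and evaluates each directly on test multivectors $(E,F_{\mathbf X},G_{\mathbf Y})$ and $(F_{\mathbf X},G_{\mathbf Y})$ using formula (\ref{eqn_derivative}), separating Lie-bracket contributions from $\xg$-action contributions. For the first piece the bracket part gives $(n-2k)\,E^*\wedge\phi_k^{\mv}$ while the action part cancels via the identity $k\,\iota_E\rho_{\xi_X}\phi_k^{\mv}=\iota_E(E^*\wedge\iota_{F_X}\rho_E\phi_k^{\mv})$; for the second piece the bracket part vanishes and a direct (if lengthy) computation produces $(-1)^{n-k}k(n-2k)\ast_K\big((d_PE^*)^{n-k-1}\wedge d_P\sigma_k\big)-(n-2k)\,E^*\wedge\phi_k^{\mv}$. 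Summing the two pieces, the $E^*\wedge\phi_k^{\mv}$ terms cancel and what remains matches $d_P\phi_{k+1}^{\mv}$ exactly via (\ref{eqn_P_derivative_Poisson_kernel}). Your final paragraph on the case $k=n/2$ is correct.
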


\begin{proof}
By Theorem \ref{thm_Poisson_transform_differential_operators} we need to determine the $K$-derivative of $\phi_k^{\mv}$ and relate it the $P$-derivative of $\phi_{k+1}^{\mv}$, c.f. equation (\ref{eqn_P_derivative_Poisson_kernel}) in section \ref{sec_construction_tractor_valued_poisson}. In order to do so, for all $X$, $Y \in \mr^n$ we denote by $\xi_{X}$ and $\eta_Y$ the corresponding elements in $\xp_+$ and $\xk$ and by $F_X \in \xp/\xm$ and $G_Y \in \xk/\xm$ their projection to $\xg/\xm$, respectively. First, we will determine the Poisson kernels $E^* \wedge \iota_Ed_K\phi_k^{\mv}$ and $\iota_E(E^* \wedge d_K\phi_k^{\mv})$ using formula (\ref{eqn_derivative}) for the infinitesimal version of the derivative and then combine them to obtain the claimed result.

\begin{enumerate}[(i)]
\item Let $\textbf{X}$ be a $k$-vector and $\textbf{Y}$ be an $(n-k)$-vector on $\mr^n$. Applying formula (\ref{eqn_derivative}) for the $K$-derivative we can split
\begin{align*}
 E^* \wedge \iota_Ed_K\phi_k^{\mv}(E, F_\textbf{X}, G_\textbf{Y}) = d_K\phi_k^{\mv}(E, F_\textbf{X}, G_\textbf{Y}) = (*)
\end{align*}
into $(*) = (I) + (II)$, where $(I)$ is the sum of all expressions involving Lie brackets of representatives of the vectors in $\xg/\xm$ and $(II)$ is the sum of all expressions involving the action of $\xg$ on $\End(\mv)$.

For the first summand the bracket relations on $\xg$ imply that the only pairings which contribute nontrivially to the $K$-derivative are those between the grading element and the representatives $\xi_{X_i}$ and $\eta_{Y_j}$, which equal $\xi_{X_i}$ and $2\xi_{Y_j} - \eta_{Y_j}$, respectively. Thus, a direct computation shows that
\begin{align*}
 (I) = (n-2k)\left(E^* \wedge \phi_k^{\mv}\right)(E, F_\textbf{X}, G_\textbf{Y}).
\end{align*}

For the other summand of $(*)$ we have to alternate over the actions of the representatives of the vectors in $\xp/\xm$, i.e. we have to determine
\begin{align*}
 (II) = \rho_E\phi_k^{\mv}(F_\textbf{X}, G_\textbf{Y}) + \sum_{i=1}^k (-1)^{i+1} \left(\rho_{\xi_{X_i}}\phi_k^{\mv}\right)(E, F_\textbf{X}^i, G_\textbf{Y}).
\end{align*}
 Using the explicit formulae for the $\xg$-action on $\mv$ and the exterior calculus on $(\xg/\xm)^*$ a direct computation shows that 
 \begin{align*}
  k \iota_E\rho_{\xi_X}\phi_k^{\mv} = \iota_E(E^* \wedge \iota_{F_X}\rho_E\phi_k^{\mv})
 \end{align*}
for all $X \in \mr^n$. Inserting this into the second sum of $(II)$ and moving the vectors $F_{X_i}$ to the right place we deduce that $(II) = 0$. 

\item By definition, the expression $\iota_E\left(E^* \wedge d_K\phi_k^{\mv}\right)$ is only nontrivial upon insertion of $F_\textbf{X}$ and $G_\textbf{Y}$ for any $(k+1)$-vector $\textbf{X}$ and $(n-k)$-vector $\textbf{Y}$. Therefore, we have to compute the expression
\begin{align*}
 \iota_E\left(E^* \wedge d_K\phi_k^{\mv}\right)(F_\textbf{X}, G_\textbf{Y}) = d_K\phi_k^{\mv}(F_\textbf{X}, G_\textbf{Y}) = (**),
\end{align*}
for which we again use formula (\ref{eqn_derivative}). Note that the Lie bracket between two elements in $\xp_+$ is trivial, and that $[\xi_X, \eta_Y] = -\langle X, Y \rangle E$ is not contained in $\xk$. Therefore, the bracket part of $(**)$ is trivial, obtaining
\begin{align*}
 (**) &= \sum_{i=1}^{k+1} (-1)^{i+1}\rho_{\xi_{X_i}}\phi_k^{\mv}\left(F_\textbf{X}^i, G_\textbf{Y}\right).
\end{align*}
Using the definition of the Poisson kernel $\phi_k^{\mv}$ and the exterior calculus on $\Lambda^* (\xg/\xm)^*$ it can be shown via a lengthy computation (c.f. \cite[Proposition 3.4.3(ii)]{harrach17}) that
\begin{align*}
 (**) &= (-1)^{n-k} k(n-2k) \ast_K(E^* \wedge (d_PE^*)^{n-k-1} \wedge \iota_Ed_P\sigma_k)(F_{\bf X}, G_{\bf Y}).
\end{align*}
Finally, we can distribute the interior product with $E$ over the wedge product and apply the formula $\ast_K(\iota_E\omega) = (-1)^{\ell+1} E^* \wedge \ast_K\omega$, which holds for all Poisson kernels $\omega$ of bidegree $(\ell,n-k)$, resulting in
\begin{align*}
 \iota_E\left(E^* \wedge d_K\phi_k^{\mv}\right) =\ &(-1)^{n-k} k(n-2k) \ast_K((d_PE^*)^{n-k-1} \wedge d_P\sigma_k) \\
 &- (n-2k) E^* \wedge \phi_k^{\mv}.
\end{align*}
\end{enumerate}
Combining the formulae from parts (i) and (ii) we obtain
\begin{align*}
 (k+2) d_K\phi_k^{\mv} &= (k+2)\left(E^* \wedge \iota_Ed_K\phi_k^{\mv} + \iota_E\left(E^* \wedge d_K\phi_k^{\mv}\right)\right) \\
 &= (-1)^{k+1} k(n-2k)\left((-1)^{n+1}(k+2) \ast_K((d_PE^*)^{n-k-1} \wedge d_P\sigma_k)\right),
\end{align*}
and the expression in the brackets coincides with the $P$-derivative of $\phi_{k+1}^{\mv}$, c.f. equation (\ref{eqn_P_derivative_Poisson_kernel}) in section \ref{sec_construction_tractor_valued_poisson}. Finally, for the global result we apply Theorem \ref{thm_Poisson_transform_differential_operators}.
\end{proof}

The last result also induces compatibility results for the BGG-operators on $G/P$ and the induced $G$-equivariant maps $\underline{\Phi}_k^{\mv}$ induced by the Poisson transforms. Indeed, recall that we constructed the Poisson transforms $\Phi_k$ to satisfy $\Phi_k \circ d^{V_P} \circ \partial^*$, which in turn implies that for any $\sigma \in \Gamma(\ch_k(G/P,V_P))$ and any representative $\alpha \in \Gamma(\ker(\partial^*))$ of $\sigma$ we have $\underline{\Phi}_k^{\mv}(D^{V_P}\sigma) = \Phi_k^{\mv}(d^{V_P}\alpha)$, c.f. section \ref{sec_equivariant_operators}. Thus, from Proposition \ref{prop_density_K_derivative} we immediately obtain

\begin{thm}\label{thm_relation_BGG_derivatives}
 Let $G = \SO(n+1,1)_0$, $K \cong \SO(n+1)$ its maximal compact subgroup and $P$ its minimal parabolic subgroup. Let $\mv = \mr^{n+1,1}$ be the standard representation of $G$ and denote by $V_K$ and $V_P$ the corresponding standard tractor bundles over $G/K$ and $G/P$, respectively. Then the $G$-equivariant maps
 \begin{align*}
  \underline{\Phi}_{k}^{\mv} \colon \Gamma(\ch_k(G/P, V_P)) \to \Omega^k(G/K, T(G/K))
 \end{align*}
induced by the Poisson kernels $\phi_k^{\mv}$ satisfy
 \begin{align*}
  (k+2) d^{V_K} \circ \underline{\Phi}_k^{\mv} = (-1)^{n-k} k(n-2k) \underline{\Phi}_{k+1}^{\mv} \circ D_{k},
 \end{align*}
 for all $k = 1, \dotsc, n-2$, where $D_k$ denotes the $k$-th BGG-operator.
\end{thm}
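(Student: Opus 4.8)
The plan is to obtain the statement as a formal consequence of Proposition \ref{prop_density_K_derivative} by transferring the operator identity from $V_P$-valued forms on $G/P$ to sections of the homology bundles. The two ingredients needed are, on the one hand, the fact established in section \ref{sec_construction_tractor_valued_poisson} that each $\Phi_k^{\mv}$ descends to the operator $\underline{\Phi}_k^{\mv}$ on $\Gamma(\ch_k(G/P,V_P))$, and on the other hand the observation from section \ref{sec_equivariant_operators} that triviality of $\Phi\circ d^{V_P}\circ\partial^*$ allows one to compute the image of a BGG-operator on any $\ker(\partial^*)$-representative without invoking the splitting operator explicitly.

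First I would fix $\sigma\in\Gamma(\ch_k(G/P,V_P))$ for $1\le k\le n-2$ and choose a representative $\alpha\in\Gamma(\ker(\partial^*))$. Since $\underline{\Phi}_k^{\mv}\circ\pi=\Phi_k^{\mv}$ by construction, this gives $\underline{\Phi}_k^{\mv}(\sigma)=\Phi_k^{\mv}(\alpha)$. Next, because $\phi_{k+1}^{\mv}$ was shown during its construction to satisfy $\partial_P^*\phi_{k+1}^{\mv}=0$ and $\partial_P^*d_P\phi_{k+1}^{\mv}=0$, Corollary \ref{cor_bgg_poisson_kernel} together with Theorem \ref{thm_Poisson_transform_differential_operators}(i) yields $\Phi_{k+1}^{\mv}\circ\partial^*=0$ and $\Phi_{k+1}^{\mv}\circ d^{V_P}\circ\partial^*=0$. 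Hence, writing the splitting operator as $L(\sigma)=\alpha+\partial^*\beta$, the argument of section \ref{sec_equivariant_operators} gives
\begin{align*}
 \underline{\Phi}_{k+1}^{\mv}(D_k\sigma)=\Phi_{k+1}^{\mv}\bigl(d^{V_P}L(\sigma)\bigr)=\Phi_{k+1}^{\mv}(d^{V_P}\alpha)+\Phi_{k+1}^{\mv}(d^{V_P}\partial^*\beta)=\Phi_{k+1}^{\mv}(d^{V_P}\alpha).
\end{align*}

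With these two identifications the theorem is immediate from Proposition \ref{prop_density_K_derivative}, which is an identity of operators on all of $\Omega^k(G/P,V_P)$ and in particular applies to $\alpha$:
\begin{align*}
 (k+2)\,d^{V_K}\bigl(\underline{\Phi}_k^{\mv}(\sigma)\bigr)
   &=(k+2)\,d^{V_K}\bigl(\Phi_k^{\mv}(\alpha)\bigr)
    =(-1)^{n-k}k(n-2k)\,\Phi_{k+1}^{\mv}(d^{V_P}\alpha)\\
   &=(-1)^{n-k}k(n-2k)\,\underline{\Phi}_{k+1}^{\mv}(D_k\sigma).
\end{align*}

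The heavy lifting — the commutation of $d_K\phi_k^{\mv}$ with $d_P\phi_{k+1}^{\mv}$ — has already been carried out in Proposition \ref{prop_density_K_derivative}, so the only point demanding genuine care here is the bookkeeping: one must check that the conditions $\Phi_k^{\mv}\circ\partial^*=0$ and $\Phi_{k+1}^{\mv}\circ d^{V_P}\circ\partial^*=0$ required to descend to homology hold over the entire index range $1\le k\le n-2$ (so that in particular $\Phi_{k+1}^{\mv}$ is defined), and that the degree shift $k\mapsto k+1$ is tracked consistently on both sides of the relation. I do not expect any real obstacle beyond this routine verification.
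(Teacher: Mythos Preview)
Your proposal is correct and follows essentially the same approach as the paper: both deduce the theorem from Proposition~\ref{prop_density_K_derivative} by using the observation from section~\ref{sec_equivariant_operators} that the vanishing of $\Phi_{k+1}^{\mv}\circ\partial^*$ and $\Phi_{k+1}^{\mv}\circ d^{V_P}\circ\partial^*$ lets one compute $\underline{\Phi}_{k+1}^{\mv}(D_k\sigma)$ as $\Phi_{k+1}^{\mv}(d^{V_P}\alpha)$ for any $\ker(\partial^*)$-representative $\alpha$ of $\sigma$. Your write-up is in fact slightly more careful than the paper's, since you track explicitly that it is $\Phi_{k+1}^{\mv}$ (rather than $\Phi_k^{\mv}$) whose BGG-compatibility conditions are invoked and verify that the required index $k+1\le n-1$ stays within the range where $\phi_{k+1}^{\mv}$ was constructed.
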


\address
\end{document}